\numberwithin{equation}{section}
\numberwithin{figure}{section}
  \theoremstyle{plain}
  \newtheorem*{thm*}{\protect\theoremname}
\theoremstyle{plain}
\newtheorem{thm}{\protect\theoremname}[section]
  \theoremstyle{definition}
  \newtheorem{defn}[thm]{\protect\definitionname}
  \theoremstyle{plain}
  \newtheorem{lem}[thm]{\protect\lemmaname}
  \theoremstyle{remark}
  \newtheorem{rem}[thm]{\protect\remarkname}
  \theoremstyle{plain}
  \newtheorem{prop}[thm]{\protect\propositionname}
  \theoremstyle{definition}
  \newtheorem*{example*}{\protect\examplename}
  \theoremstyle{plain}
  \newtheorem{cor}[thm]{\protect\corollaryname}
  \theoremstyle{definition}
  \newtheorem{example}[thm]{\protect\examplename}
  \theoremstyle{remark}
  \newtheorem*{rem*}{\protect\remarkname}
\renewcommand{\hom}{\mathrm{Hom}}
\newcommand{\Hom}{\mathop{\mathrm{Hom}}\nolimits}
\newcommand{\Ad}{\mathop{\mathrm{Ad}}\nolimits}
\def\lie{\mathsf{Lie}}
\def\quot{/\!\!/}
\title[Principal Schottky Bundles]{Principal Schottky Bundles over Riemann surfaces}
\author[A. C. Casimiro]{A. C. Casimiro}
\address{Departamento Matem\'{a}tica and Centro de  Matem\'{a}tica e Aplica\c{c}\~{o}es, Faculdade de Ci\^{e}ncias e Tecnologia,
Universidade Nova de Lisboa, Quinta da Torre, 2829-516 Caparica, Portugal}
\email{amc@fct.unl.pt}
\author[S. Ferreira]{S. Ferreira}
\address{Escola Superior de Tecnologia e Gest\~{a}o de Leiria,
Campus 2, Morro do Lena, Alto do Vieiro,
Apartado 4163, 2411-901 Leiria, Portugal }
\email{susfer@ipleiria.pt}
\author[C. Florentino]{C. Florentino}
\address{Departamento de Matem\'{a}tica, Faculdade de Ci\^{e}ncias, Univ. de Lisboa,  Edf. C6, Campo Grande 1749-016 Lisboa, Portugal}
\email{caflorentino@fc.ul.pt}
\thanks{This work was partially supported by the projects  PTDC/MAT/120411/2010, PTDC/MAT-GEO/0675/2012 and EXCL/MAT-GEO/0222/2012, FCT, Portugal, and
by the USA NSF grants DMS 1107452, 1107263, 1107367 "RNMS: GEometric structures And Representation varieties" (the GEAR Network)}
\keywords{Representations of the fundamental group, character varieties, principal bundles, moduli spaces, Riemann surfaces, Schottky bundles, uniformization}
\providecommand{\corollaryname}{Corollary}
  \providecommand{\definitionname}{Definition}
  \providecommand{\examplename}{Example}
  \providecommand{\lemmaname}{Lemma}
  \providecommand{\propositionname}{Proposition}
  \providecommand{\remarkname}{Remark}
  \providecommand{\theoremname}{Theorem}
\providecommand{\theoremname}{Theorem}
  \providecommand{\corollaryname}{Corollary}
  \providecommand{\definitionname}{Definition}
  \providecommand{\examplename}{Example}
  \providecommand{\lemmaname}{Lemma}
  \providecommand{\propositionname}{Proposition}
  \providecommand{\remarkname}{Remark}
  \providecommand{\theoremname}{Theorem}
\providecommand{\theoremname}{Theorem}
\begin{document}
\begin{abstract}
We introduce and study (strict) Schottky $G$-bundles over a compact
Riemann surface $X$, where $G$ is a connected reductive algebraic
group. Strict Schottky representations are shown to be related to
branes in the moduli space of $G$-Higgs bundles over $X$, and we
prove that all Schottky $G$-bundles have trivial topological type.
Generalizing the Schottky moduli map introduced in \cite{Florentino}
to the setting of principal bundles, we prove its local surjectivity
at the good and unitary locus. Finally, we prove that the Schottky
map is surjective onto the space of flat bundles for two special classes:
when $G$ is an abelian group over an arbitrary $X$, and the case
of a general $G$-bundle over an elliptic curve.
\end{abstract}

\maketitle

\section{Introduction and Main Results}

\subsection{Schottky uniformizations}

The classical Fuchsian uniformization theorem provides an explicit
parameterization of all Riemann surfaces $X$ of genus $g\geq2$:
every such $X$ can be obtained as $\mathbb{H}/\Gamma$, a quotient
of the upper half-plane $\mathbb{H}$ by a Fuchsian group $\Gamma\subset PSL_{2}\mathbb{R}$,
isomorphic to the fundamental group of $X$, $\pi_{1}(X)$. A less
well-known result, the so-called ``retrosection theorem'', or Schottky
uniformization, asserts that we can also write $X\cong\Omega/\Sigma$,
for a certain \emph{free group} of Möbius transformations $\Sigma\subset PSL_{2}\mathbb{C}$
of rank $g$ (called, in this context, a \emph{Schottky group}) and
region of discontinuity (for the $\Sigma$-action) $\Omega\subset\mathbb{CP}^{1}$
(see \cite{Bers75,Ford}).

These are two very different parametrizations: the Fuchsian one is
essentially unique, and provides an identification between Teichmüller
space and one component of the (real) \emph{character variety} $\hom(\pi_{1}(X),PSL_{2}\mathbb{R})/PSL_{2}\mathbb{R}$
(the quotient of representations $\pi_{1}(X)\to PSL_{2}\mathbb{R}$
by conjugation); by contrast, the Schottky one is defined on a less
explicit subset of $\hom(\Sigma,PSL_{2}\mathbb{C})/PSL_{2}\mathbb{C}$,
having the advantage of providing manifestly holomorphic coordinates.

Passing from surfaces to holomorphic bundles over a \emph{fixed} Riemann
surface $X$, it is natural to consider analogous explicit parametrizations.
In their famous papers \cite{NarasimhanSeshadri2,NarasimhanSeshadri1},
Narasimhan and Seshadri proved that every polystable vector bundle
over $X$, of degree zero, can be obtained from a (unique up to conjugation)
unitary representation. Ramanathan generalised Narasimhan-Seshadri's
results to principal $G$-bundles, where $G$ is any reductive algebraic
group over $\mathbb{C}$ (see \cite{Ramanathan1,Ramanathan2}).

More precisely, a representation $\rho:\pi_{1}(X)\to K\subset G$
into $K$, a maximal compact subgroup of $G$, defines a holomorphic
$G$-bundle over $X=\mathbb{H}/\pi_{1}(X)$, equipped with a natural
flat connection: 
\begin{equation}
E_{\rho}\,:=\,(\mathbb{H}\times G)/_{\!\!\rho}\ \pi_{1}(X),\label{eq:Shot-G-bdl}
\end{equation}
using the diagonal action of $\pi_{1}(X)$, via $\rho$, on the trivial
$G$-bundle $G\times\mathbb{H}\to\mathbb{H}$. A special case of the
results of Narasimhan, Seshadri and Ramanathan is that a holomorphic
$G$-bundle over $X$, which admits a flat connection, is polystable
if and only if it can be written in the above form, for some $\rho:\pi_{1}(X)\to K$,
unique up to conjugation. Their result can thus be seen as a bundle
version of classical Fuchsian uniformization, and identifies the moduli
space of flat semistable $G$-bundles with the ``real character variety''
\[
\hom(\pi_{1}(X),K)/K.
\]

The question of whether some sort of Schottky uniformization can be
obtained for a large class of holomorphic $G$-bundles is still an
open problem, as far as we know.\footnote{Interestingly, the consideration of the Schottky uniformization problem
for vector bundles over Mumford curves, in the framework of $p$-adic
analysis, has furnished stronger results. (see \cite{Faltings83}).} Florentino studied the case of \emph{vector} bundles and obtained
some partial results (\cite{Florentino}), showing that all flat line
bundles, and all flat vector bundles over an elliptic curve are Schottky
bundles: these can be defined as in \eqref{eq:Shot-G-bdl} for certain
representations $\rho$ of a \emph{free group of rank $g$} into the
general linear group $GL_{n}\mathbb{C}$. Moreover, an open subset
of the moduli space of degree zero semistable vector bundles consists
of Schottky vector bundles. This study was motivated by an attempt
to develop an analytic theory of non-abelian theta functions and their
relation to the spaces of conformal blocks in conformal field theory
(see \cite{Beauville,FlorentinoMouraoNunes03,Tyu}).

Schottky (\emph{principal}) $G$-bundles were defined by Florentino
and Ludsteck, for a general complex reductive algebraic group $G$
(\cite{FlorentinoLudsteck}). They showed that there exists a natural
equivalence between the categories of unipotent representations of
a Schottky group of rank $g$ and unipotent holomorphic vector bundles
over Riemann surface of genus $g$.

In this paper, we generalize the results of the article \cite{Florentino}
in two different ways: we replace $GL_{n}\mathbb{C}$ by an arbitrary
connected complex reductive group $G$, and we consider a more general
definition of Shottky representations, allowing all marked generators
to be represented in the center of $G$.

\subsection{Main results}

We now summarize our main results, emphasizing the novelties in the
principal bundle case, while describing the contents of each section.
Consider the usual presentation 
\begin{equation}
\pi_{1}(X)=\left\langle \alpha_{1},\cdots,\alpha_{g},\beta_{1},\cdots,\beta_{g}\,|\:{\textstyle \prod_{i=1}^{g}}\alpha_{i}\beta_{i}\alpha_{i}^{-1}\beta_{i}^{-1}=1\right\rangle ,\label{eq:pi1}
\end{equation}
of the fundamental group of a fixed Riemann surface $X$, of genus
$g\geq1$ (we are implicitly choosing a base point $x_{0}\in X$,
but this is irrelevant when considering isomorphism classes of representations).
A representation $\rho:\pi_{1}(X)\to G$ is said to be \emph{Schottky}
(with respect to our choice of generators above) if $\rho(\alpha_{i})$
is in the \emph{center} $Z=Z_{G}$ of $G$ for all $i=1,\cdots,g$.
These include what we call \emph{strict Schottky representations},
which verify $\rho(\alpha_{i})=e$ for all $i=1,\cdots,g$, with $e$
the identity of $G$. Although the definitions require a choice of
generators for $\pi_{1}(X)$, our results are independent of such
choices. Thus, from an algebro-geometric perspective, Schottky representations
(up to conjugation) are naturally parametrized by the affine \emph{geometric
invariant theory} (GIT) quotient 
\[
\mathbb{S}:=\Hom\left(F_{g},\,Z\times G\right)\quot G,
\]
where $F_{g}$ denotes a fixed free group of rank $g$ (see Proposition
\ref{prop:SchottkyAndFreeGroup}). Besides these definitions and first
properties, in Section \ref{sec:Schottky-Representations} we describe
the irreducible components of the Schottky space $\mathbb{S}$ and
prove the existence of good and unitary Schottky representations for
$g\geq2$.

Strict Schottky representations have the following natural topological
interpretation. Suppose that $M$ is a 3-manifold whose boundary is
$X$, and the natural morphism $i_{*}:\pi_{1}(X)\to\pi_{1}(M)$ induced
by the inclusion $i:X\hookrightarrow M$, has all the $\alpha_{i}$
in its kernel and the $\beta_{i}$ are free, $i=1,\cdots,g$. Then
it is easy to see that \emph{strict} Schottky representations are
the representations of $\pi_{1}(X)$ which ``extend to $M$'', meaning
that they factor through $i_{*}$ (note that $\pi_{1}(M)$ is indeed
a free group of rank $g$). In addition to its relation to the uniformization
problems for holomorphic $G$-bundles, Schottky representations also
appear in a different context, related to non-abelian Hodge theory:
recently, Baraglia and Schaposnik considered $G$-Higgs bundles over
a Riemann surface equipped with an anti-holomorphic involution and
showed that, inside the moduli space of $G$-Higgs bundles, the locus
of those which are fixed by an associated involution define what is
called an $(A,B,A)$-brane (\cite{BaragliaSchaposnick}). In Section
\ref{sec:Higgs-bundles,-Schottky,Branes}, we identify all strict
Schottky representations as elements of this brane (see \cite[Proposition 43]{BaragliaSchaposnick}
and Proposition \ref{prop:Schottky-brane}). The study of branes is
of great interest in connection with mirror symmetry and the geometric
Langlands correspondence (see \cite{KapustinWitten}).

Section \ref{sec:Schottky-G-bundles} provides the definition of Schottky
$G$-bundles and their relation to Schottky vector bundles in terms
of associated bundles. A Schottky (principal) $G$-bundle over $X$
is defined to be a holomorphic bundle which is isomorphic to a bundle
of the form \eqref{eq:Shot-G-bdl}, for some Schottky representation
$\rho$ (so that its conjugation class $[\rho]$ belongs to $\mathbb{S}$).
Similarly, we define strict Schottky bundles. Note that all Schottky
bundles, being defined by representations of $\pi_{1}(X)$, necessarily
admit a flat holomorphic connection.

The association of a Schottky $G$-bundle to a Schottky representation
defines what we call the \emph{Schottky uniformization map}: 
\[
\mathbf{W}\,:\,\mathbb{S}\to M_{G},
\]
where $M_{G}$ stands for the set of isomorphism classes of $G$-bundles
over $X$ admitting a flat connection.%
{} Two important properties of $\mathbf{W}$ are in clear contrast with
the Narasimhan-Seshadri-Ramanathan uniformization (see Remark \ref{rem:nosemistable}(1)):

(1) A (strict) Schottky bundle is not necessarily semistable (contrary
to those coming from unitary representations $\rho:\pi_{1}(X)\to K$);

(2) If $E=E_{\rho}$ is a Schottky bundle, then $[\rho]\in\mathbb{S}$
is not unique in general, and the preimage $\mathbf{W}^{-1}([E])$
is typically infinite.

By results of Ramanathan \cite{Ramanathan1}, further developed in
\cite{Li}, the topological invariants of flat $G$-bundles are labeled
by elements in $\pi_{1}(DG)$, where $DG$ is the derived group of
$G$. Moreover, there exist flat $G$-bundles with all possible topological
types. In Section \ref{sec:Topological-type}, we prove that the Schottky
case is particularly simple (Theorem \ref{thm:Schottky bdl has trivial type}):
\begin{thm*}
\textbf{\emph{(A)}} Every Schottky $G$-bundle is topologically trivial. 
\end{thm*}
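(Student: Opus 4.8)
The plan is to compute the topological type of $E_{\rho}$ directly, through the commutator obstruction in the universal cover of $G$, and then to observe that it vanishes precisely because the images $\rho(\alpha_{i})$ are central.

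First I would recall the standard description of the topological type of a flat bundle. Fix the universal covering homomorphism $p:\widetilde{G}\to G$ of the connected group $G$; its kernel $\Gamma:=\ker p$ is a discrete central subgroup of $\widetilde{G}$ canonically identified with $\pi_{1}(G)$. Writing $A_{i}:=\rho(\alpha_{i})$, $B_{i}:=\rho(\beta_{i})$ and choosing arbitrary lifts $\widetilde{A}_{i},\widetilde{B}_{i}\in\widetilde{G}$, the element
\[
\tau(\rho)\,:=\,\prod_{i=1}^{g}[\widetilde{A}_{i},\widetilde{B}_{i}]\ \in\ \widetilde{G}
\]
lies in $\Gamma$, since $p\big(\tau(\rho)\big)=\prod_{i}[A_{i},B_{i}]=e$ by the surface relation \eqref{eq:pi1}. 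As $\Gamma$ is central in $\widetilde{G}$, multiplying any lift by an element of $\Gamma$ leaves all commutators unchanged, so $\tau(\rho)$ is independent of the chosen lifts; this is exactly the class in $\pi_{1}(G)\cong H^{2}(X,\pi_{1}(G))$ that classifies $E_{\rho}$ topologically (the clutching class over the top cell of $X$). By the results of Ramanathan and Li quoted above this class lies in the image of the inclusion $\pi_{1}(DG)\hookrightarrow\pi_{1}(G)$, which is injective for reductive $G$ (the isogeny $Z^{0}\times DG\to G$ is a covering, so $\pi_{1}(DG)\hookrightarrow\pi_{1}(Z^{0})\oplus\pi_{1}(DG)=\pi_{1}(Z^{0}\times DG)\hookrightarrow\pi_{1}(G)$); hence $\tau(\rho)=e$ is equivalent to topological triviality of $E_{\rho}$.

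The key step is the observation that central elements of $G$ lift to central elements of $\widetilde{G}$, i.e.\ $p^{-1}(Z)\subseteq Z(\widetilde{G})$. Indeed, given $z\in Z$ and $\widetilde{z}\in p^{-1}(z)$, the conjugation $c_{\widetilde{z}}:x\mapsto\widetilde{z}\,x\,\widetilde{z}^{-1}$ satisfies $p\circ c_{\widetilde{z}}=p$ because $z$ is central, and it fixes the identity; since $\widetilde{G}$ is connected, uniqueness of lifts through the covering $p$ forces $c_{\widetilde{z}}=\mathrm{id}_{\widetilde{G}}$, so $\widetilde{z}$ is central. This holds for every $z\in Z$, irrespective of whether $z$ lies in the identity component $Z^{0}$.

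Finally, for a Schottky representation we have $A_{i}=\rho(\alpha_{i})\in Z$ by definition, so each lift $\widetilde{A}_{i}$ is central in $\widetilde{G}$ and $[\widetilde{A}_{i},\widetilde{B}_{i}]=e$ for every $i$; therefore $\tau(\rho)=e$ and $E_{\rho}$ is topologically trivial. I expect the only delicate points to be the two standard inputs — the identification of $\tau(\rho)$ with the topological type, and the injectivity $\pi_{1}(DG)\hookrightarrow\pi_{1}(G)$ — while the Schottky computation itself is immediate: the defining condition $\rho(\alpha_{i})\in Z$ is arranged exactly so that these commutators die in the universal cover.
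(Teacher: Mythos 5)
Your proof is correct, but it is organized differently from the paper's. The paper's argument never computes an obstruction class explicitly: it views the Schottky representation as $\rho:F_{g}\to Z_{G}\times G$, uses the \emph{freeness} of $F_{g}$ to lift the whole homomorphism to $\widetilde{\rho}:F_{g}\to\widetilde{G}\times\widetilde{G}$, invokes Lemma \ref{lem:centro} (which is exactly your central-lifts claim, $p^{-1}(Z_{G})=Z_{\widetilde{G}}$) to see that $\widetilde{\rho}$ is again Schottky for $\widetilde{G}$, concludes that $E_{\widetilde{\rho}}$ is topologically trivial since $\pi_{1}(\widetilde{G})=1$, and finishes by functoriality of the topological type under extension of structure group, $\delta(E_{H})=\phi_{*}(\delta(E_{G}))$, with $\phi=p\times p$. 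You instead compute the clutching class directly as the commutator product $\tau(\rho)=\prod_{i}[\widetilde{A}_{i},\widetilde{B}_{i}]\in\ker p\cong\pi_{1}(G)$ and show it dies because the $\widetilde{A}_{i}$ are central. What your route buys: it needs no global lift of the representation (you only lift generators, so freeness of $F_{g}$ plays no role) and no functoriality of $\delta$, and your uniqueness-of-lifts proof that $p^{-1}(Z)\subseteq Z_{\widetilde{G}}$ is slicker than the paper's path-and-discreteness argument. What it costs: your sole nontrivial input is the standard identification of $\tau(\rho)$ with $\delta(E_{\rho})$, which you quote without proof (it is standard, e.g.\ in Goldman's work cited as \cite{GoldmanTopCompSR}, so this is acceptable, but it is a heavier external input than the paper's, which only uses that $H^{2}(X,\pi_{1}(\widetilde{G}))=0$ plus functoriality). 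One small logical wrinkle: your detour through the injectivity of $\pi_{1}(DG)\hookrightarrow\pi_{1}(G)$ is unnecessary and the ``hence'' there is misleading --- once $\tau(\rho)$ is identified with the topological type in $\pi_{1}(G)$, its vanishing is \emph{by definition} equivalent to topological triviality, with no reference to $DG$ needed; the injectivity claim itself is true (an isogeny $Z^{\circ}\times DG\to G$ is a covering) but idle.
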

In Section \ref{sec:Uniformization-map} we define and study the notion
of \emph{analytic equivalence} of representations and consider the
period map, for later use in computing the derivative of the Schottky
map. In general, Schottky representations and strict ones are distinct.
Analytic equivalence allows to prove that, for Schottky bundles, the
distinction between the strict and the general case is not relevant
when $G$ has a \emph{connected} center (Proposition \ref{connected-center}).

In Section \ref{sec:Schottky-moduli-map}, we consider the tangent
spaces to Schottky space, describe them in terms of the first cohomology
group of $F_{g}$ in certain $F_{g}$-modules, and compute the dimension
of the Schottky space $\mathbb{S}:=\mathcal{S}\quot G$. We characterize
the kernel of the derivative of the Schottky moduli map at a good
Schottky representation. We also prove that the good locus of strict
Schottky space is a Lagrangian submanifold of the complex manifold
of the smooth points of $\hom(\pi_{1}(X),G)\quot G$.

Let $\mathcal{M}_{G}$ denote the moduli space of semistable $G$-bundles
over $X$ and consider the restricted map called the \emph{Schottky
moduli map} 
\[
\mathbf{V}\,:\,\mathbb{S}^{*}\to\mathcal{M}_{G},
\]
where $\mathbb{S}^{*}:=\mathbf{W}^{-1}\left(\mathcal{M}_{G}\cap M_{G}\right)$
is a dense subset of $\mathbb{S}$. With their natural complex structures,
this gives now a holomorphic map between the smooth locus of the corresponding
spaces. In Section \ref{sec:Surjectivity-of-the} we compute the derivative
of the Schottky moduli map at a good and unitary representation (assuming
also that $[E_{\rho}]$ is a smooth point of $\mathcal{M}_{G}$),
proving that it is an isomorphism when $G$ is semisimple (Corollary
\ref{cor:maintheorem}). In the more general case of reductive $G$,
the Schottky moduli map will be a submersion (Theorem \ref{thm: main thm}). 
\begin{thm*}
\textbf{\emph{(B)}} Let $\rho:\pi_{1}(X)\to G$ be a good and unitary
Schottky representation, such that $[E_{\rho}]$ is a smooth point
in $\mathcal{M}_{G}$. Then, the derivative of the Schottky moduli
map at $[\rho]\in\mathbb{S}^{*}$ has maximal rank. In particular,
locally around $[\rho]$, the Schottky moduli map $\mathbf{V}:\mathbb{S}^{*}\to\mathcal{M}_{G}$
is a submersion, and $\dim\mathbf{V}^{-1}\left(\left[E_{\rho}\right]\right)=g\,\dim Z$. 
\end{thm*}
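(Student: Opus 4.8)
The plan is to reduce the statement to the surjectivity of the differential $d\mathbf{V}_{[\rho]}$ and then read off the fibre dimension from a kernel computation. First I would record the two tangent spaces. By the description of the Schottky tangent spaces in Section \ref{sec:Schottky-moduli-map}, at a good $\rho$ one has $T_{[\rho]}\mathbb{S}\cong H^1(F_g,\mathfrak{z}\oplus\mathfrak{g})=H^1(F_g,\mathfrak{z})\oplus H^1(F_g,\mathfrak{g})$, where $F_g$ acts trivially on $\mathfrak{z}=\mathsf{Lie}(Z)$ and through $\Ad\rho$ on $\mathfrak{g}$; since $[E_\rho]$ is a smooth point, $T_{[E_\rho]}\mathcal{M}_G\cong H^1(X,\ad E_\rho)$. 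Writing $\mathfrak{g}=\mathfrak{z}\oplus\mathfrak{g}_{ss}$ as $\Ad\rho$-modules splits both sides into a central and a semisimple summand, and a count on the good locus using Poincaré duality gives $\dim T_{[\rho]}\mathbb{S}-\dim\mathcal{M}_G=g\dim Z$. Thus it suffices to prove $d\mathbf{V}_{[\rho]}$ is surjective: the fibre dimension $g\dim Z$ then follows from the submersion theorem. The structural input, supplied by the period map of Section \ref{sec:Uniformization-map}, is that $d\mathbf{V}_{[\rho]}$ is the composite of the pullback $p^{*}$ along the surjection $\pi_1(X)\twoheadrightarrow F_g$ (equivalently, restriction from a handlebody $M$ with $\partial M=X$) with the Hodge projection $H^1_{dR}(X,\ad E_\rho)\to H^{0,1}=H^1(X,\ad E_\rho)$ onto the $(0,1)$-part, the latter being available precisely because $\rho$ is unitary. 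Since $p^{*}$ and the projection both respect the coefficient decomposition $\mathfrak{z}\oplus\mathfrak{g}_{ss}$, the differential is block-diagonal and I can treat the two summands separately.

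For the central summand $\mathfrak{z}\otimes H^1(X,\mathcal{O}_X)$ of the target I would argue directly. The central deformations of the generators $\alpha_i$ and $\beta_i$ contribute the de Rham classes $\sum_i\dot z_i\otimes\alpha_i^{*}$ and $\sum_i\dot h_i\otimes\beta_i^{*}$ inside $\mathfrak{z}\otimes H^1_{dR}(X;\mathbb{C})$; since $\{\alpha_i^{*},\beta_i^{*}\}$ spans $H^1_{dR}(X;\mathbb{C})$, the images of their $(0,1)$-parts span $H^{0,1}(X)$, so the central part of the target is hit. This already exhibits the $2g\dim Z$-dimensional central source mapping onto the $g\dim Z$-dimensional central target with a $g\dim Z$-dimensional kernel, which will account for the entire fibre.

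The heart of the argument is the semisimple summand, where $\dim H^1(F_g,\mathfrak{g}_{ss})=(g-1)(\dim G-\dim Z)=\dim H^1(X,\ad_{ss}E_\rho)=\tfrac12\dim H^1_{dR}(X,\ad_{ss}E_\rho)$, so I must show the composite is an isomorphism, i.e. that $L:=p^{*}H^1(F_g,\mathfrak{g}_{ss})\subset H^1_{dR}(X,\ad_{ss}E_\rho)$ is a complement to the Hodge piece $H^{1,0}$. I would establish this in three steps. (i) By ``half lives, half dies'' applied to the handlebody $M$ (equivalently, by the Lagrangian property of the good strict Schottky locus proved in Section \ref{sec:Schottky-moduli-map}), $L$ is Lagrangian of dimension $(g-1)(\dim G-\dim Z)$ for the Goldman form $\omega(a,b)=\int_X B(a\wedge b)$, with $B$ the Killing form; in particular $p^{*}$ is injective on this summand. (ii) Because $\rho$ is unitary, $\Ad\rho$ preserves a compact real form $\mathfrak{k}_{ss}$, so $\ad_{ss}E_\rho$ is the complexification of a real orthogonal flat bundle and $L$, being the complexification of the corresponding real restriction image, is stable under the conjugation $\tau$ of $H^1_{dR}$. (iii) The Hodge--Riemann relations for the weight-one polarized Hodge structure on $H^1(X,\ad_{ss}E_\rho)$ (polarized by $-B$, positive on $\mathfrak{k}_{ss}$) give $i\,\omega(u,\bar u)\neq0$ for $0\neq u\in H^{1,0}$; if $u\in L\cap H^{1,0}$ then $\bar u=\tau u\in L$ as well, whence $\omega(u,\bar u)=0$ by the Lagrangian property, forcing $u=0$. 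Thus $L\cap H^{1,0}=0$, and by equality of dimensions $L$ projects isomorphically onto $H^{0,1}$.

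Combining the two summands, $d\mathbf{V}_{[\rho]}$ is surjective, hence of maximal rank, so $\mathbf{V}$ is a submersion near $[\rho]$; its kernel is exactly the $g\dim Z$-dimensional space of central deformations annihilated by $(\dot z_i,\dot h_i)\mapsto\sum_i\dot z_i(\alpha_i^{*})^{0,1}+\sum_i\dot h_i(\beta_i^{*})^{0,1}$, giving $\dim\mathbf{V}^{-1}([E_\rho])=g\dim Z$. When $G$ is semisimple, $\dim Z=0$ and the map is a local isomorphism, yielding Corollary \ref{cor:maintheorem}. I expect the main obstacle to be step (iii): making the polarization and its sign precise so that unitarity genuinely forces the Schottky Lagrangian $L$ to avoid the Hodge filtration. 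This transversality, rather than the dimension bookkeeping, is exactly what the hypotheses \emph{unitary} and \emph{smooth point} are there to guarantee.
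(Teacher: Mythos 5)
Your proposal is correct, and although it runs on the same underlying mechanism as the paper's proof of Theorem \ref{thm: main thm}, the packaging is genuinely different. The paper stays entirely in group cohomology: it characterizes $\ker d_{[\rho]}\mathbf{V}$ as $T_{[\rho]}\mathbb{S}\cap\mathrm{Im}\,P_{\Ad_{\rho}}$ (Lemma \ref{imdQequaldE} plus Proposition \ref{PeriodMapEQdiFQrho}), proves by hand --- Stokes' theorem over the $4g$-gon together with Fox calculus --- that the period map is unitary at good unitary representations (Theorem \ref{prop: Bilinear relations for differentials of pi1module}), kills $H^{1}(F_{g},\mathfrak{g}_{\Ad_{\rho_{2}}})\cap\mathrm{Im}\,P_{\Ad_{\rho}}$ using $H^{2}(F_{g},\cdot)=0$ (Lemma \ref{prop:G ss Period map omega zero implies omega zero}), and concludes the strict case is an isomorphism purely from the dimension equality $\dim\mathbb{S}_{s}=(g-1)\dim G+\dim Z=\dim\mathcal{M}_{G}$ (Proposition \ref{Thm strict Schotky fst coh grp} and Ramanathan), finally splitting off the $\mathfrak{z}^{g}$ factor. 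You translate all of this into Hodge theory: your step (i) is exactly Proposition \ref{prop:Schottky-lagrangian} (both rest on the same vanishing of $H^{2}(F_{g},\cdot)$), and your steps (ii)--(iii), the $\tau$-stability of $L$ together with the Hodge--Riemann relations forcing $L\cap H^{1,0}=0$, are precisely what the paper's bilinear-relations theorem and Lemma \ref{prop:G ss Period map omega zero implies omega zero} prove from scratch: the hermitian pairing $\left\langle\!\left\langle\,\cdot\,,\,\cdot\,\right\rangle\!\right\rangle$ secretly encodes the conjugation $\tau$ (legitimately, since $\Ad_{\rho}$ preserves $\mathfrak{k}$) and the polarization. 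Your route buys conceptual transparency and lets standard polarized-Hodge-structure facts replace the explicit $4g$-gon computation; the paper's route buys self-containedness and explicit sign control, which is exactly the point you flag as delicate. Your handling of the reductive case also differs: you block-diagonalize over $\mathfrak{g}=\mathfrak{z}\oplus\mathfrak{g}_{ss}$ and hit the central target $\mathfrak{z}\otimes H^{0,1}(X)$ directly, whereas the paper reduces to the strict case via $T_{[\rho]}\mathbb{S}\cong\mathfrak{z}^{g}\oplus T_{[\rho_{2}]}\mathbb{S}_{s}$ and carries the surjectivity over.

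One soft spot: you assert, rather than prove, that $d_{[\rho]}\mathbf{V}$ is literally the inflation $p^{*}$ followed by the Hodge projection onto $H^{0,1}$; the paper only establishes the weaker kernel statement $\ker d_{[\rho]}\mathbf{E}=\mathrm{Im}\,d_{0}Q_{\rho}=\mathrm{Im}\,P_{\Ad_{\rho}}$, and your central-summand surjectivity leans on the stronger description. It is true, but needs a line: injectivity of $P_{\Ad_{\rho}}$ at good unitary points (immediate from the bilinear relations and positivity of $(\,\cdot\,,\,\cdot\,)$) plus Riemann--Roch gives $\dim\ker d_{[\rho]}\mathbf{E}=(g-1)\dim G+\dim Z$, so $d_{[\rho]}\mathbf{E}$ has rank $\dim\mathcal{M}_{G}$ and the induced map $H^{1}_{dR}/H^{1,0}\to T_{[E_{\rho}]}\mathcal{M}_{G}$ is the expected isomorphism; alternatively, on the central block you could simply quote the classical Jacobian computation (the $n=1$ case in \cite{Florentino}). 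With that line added, your argument is complete and delivers the same conclusion, including $\dim\mathbf{V}^{-1}([E_{\rho}])=g\dim Z$.
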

Finally, in Section \ref{chap:Particular-cases}, we consider two
special classes of Schottky principal bundles: the first case are
$G$-bundles where $G=(\mathbb{C}^{*})^{m}$, for some $m\in\mathbb{N}$,
over a general surface $X$. In this case, since our definition is
more general than the one in \cite{Florentino}, the strict Schottky
condition turns out to be equivalent to flatness (Proposition \ref{prop: C star}).
The second special class consists of Schottky $G$-bundles over a
compact Riemann surface of genus $g=1$, which needs a distinct treatment
than the case $g\geq2$ (Theorem \ref{flat then schottky principal}).
Again, in this case, the Schottky condition is equivalent to flatness. 
\begin{thm*}
\textbf{\emph{(C)}} Let $X$ be an elliptic curve and $E$ a $G$-bundle
over $X$. Then $E$ is Schottky if and only if it admits a flat connection. 
\end{thm*}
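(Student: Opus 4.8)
The plan is to prove the two implications separately, the forward one being immediate and the reverse one carrying all the content. Since $X$ is an elliptic curve, $\pi_{1}(X)\cong\mathbb{Z}^{2}$ is abelian, so a representation $\rho\colon\pi_{1}(X)\to G$ is nothing but a commuting pair $(A,B):=(\rho(\alpha_{1}),\rho(\beta_{1}))$ in $G\times G$. If $E$ is Schottky then by definition $E\cong E_{\rho}$ for a Schottky $\rho$, and such a bundle carries the flat connection induced by the representation; this settles ``Schottky $\Rightarrow$ flat''. For the converse I would write the universal cover as $\mathbb{C}$ with deck group $\Lambda=\mathbb{Z}\oplus\tau\mathbb{Z}$, $\alpha_{1}\mapsto1$, $\beta_{1}\mapsto\tau$, so that a flat bundle is $E_{(A,B)}=(\mathbb{C}\times G)/_{\rho}\Lambda$ for a commuting pair $(A,B)$.

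The engine of the reverse direction is a holomorphic (non-flat) gauge transformation $g(z)=\exp(z\,\xi)$ with $\xi\in\mathfrak{g}$. A direct computation shows that whenever $\xi$ lies in the Lie algebra $\mathfrak{h}=\mathrm{Lie}(H)$ of the joint centralizer $H:=Z_{G}(A,B)$, the map $g$ realizes a holomorphic isomorphism
\[
E_{(A,B)}\ \cong\ E_{(e^{\xi}A,\ e^{\tau\xi}B)},
\]
so the two (generally non-conjugate) representations are analytically equivalent in the sense of Section \ref{sec:Uniformization-map}. Hence it suffices to produce $\xi\in\mathfrak{h}$ with $e^{\xi}A\in Z_{G}$, which exhibits $E$ as a (strict) Schottky bundle.

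I would obtain such a $\xi$ by reducing $(A,B)$ to a common maximal torus. First I use the Jordan decomposition $A=A_{s}A_{u}$: since $A_{u}$ is unipotent, $\log A_{u}$ is a polynomial in $A_{u}$ and therefore lies in $\mathfrak{h}$, so the gauge with $\xi=-\log A_{u}$ removes the unipotent part and reduces to $A$ semisimple (and similarly for $B$). With $A$ semisimple I conjugate it into a maximal torus $T$, so that $B\in Z_{G}(A)$. Granting that $A$ and $B$ can be placed in one common maximal torus $T$, the flat $T$-bundle $E^{T}_{(A,B)}$ is strictly Schottky by the abelian case (Proposition \ref{prop: C star}, using that $Z_{T}=T$ is connected), i.e. $E^{T}_{(A,B)}\cong E^{T}_{(e,B')}$; extending the structure group along $T\hookrightarrow G$ and invoking functoriality of associated bundles yields $E_{(A,B)}\cong E_{(e,B')}$ with $e\in Z_{G}$, which is the desired Schottky presentation. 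Equivalently one may simply take $\xi\in\mathfrak{t}$ with $e^{\xi}=A^{-1}$, which exists because $\exp\colon\mathfrak{t}\to T$ is surjective and $\xi$ commutes with $B\in T$.

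The hard part is exactly the step ``$A,B$ lie in a common maximal torus''. This is automatic once the semisimple part $A_{s}$ lies in the identity component $H^{\circ}$ of the joint centralizer, so that an appropriate $\xi$ is reachable by $\exp$ inside $\mathfrak{h}$; it can fail, however, for the \emph{exotic} commuting pairs whose obstruction class $[\tilde{A},\tilde{B}]\in\pi_{1}(G)$ is nontrivial. Such pairs give flat bundles of nontrivial topological type, which by Theorem \ref{thm:Schottky bdl has trivial type} can never be Schottky; consistently, the reduction to a torus is available precisely for the topologically trivial flat bundles. I therefore expect the argument to dispose of the unipotent directions as above and then to control the component group $H/H^{\circ}$ of the centralizer, placing the semisimple commuting pair in a common maximal torus, and I regard this last point as the main obstacle to be overcome.
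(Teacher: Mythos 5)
Your proposal is not a complete proof, and you say so yourself: everything you do --- the gauge transformation $h(z)=\exp(z\xi)$ with $\xi$ in the Lie algebra of the joint centralizer (a correct application of Theorem \ref{analytic equivalence G bundles}), the removal of unipotent parts via $\xi=-\log A_{u}$, and the abelian endgame via Proposition \ref{prop: C star} --- is a reduction to the step ``the commuting semisimple pair lies in a common maximal torus,'' which you leave open and which genuinely fails when the centralizer $Z_{G}(A_{s})$ is disconnected. For comparison, the paper's proof of Theorem \ref{flat then schottky principal} avoids commuting pairs entirely: since $E$ is flat, $\mathrm{Ad}(E)$ is a flat degree-zero vector bundle over the elliptic curve, hence a Schottky \emph{vector} bundle by \cite[Theorem 6]{Florentino}, and then Proposition \ref{AdEToESchottky} (flat $E$ with Schottky $\mathrm{Ad}(E)$ is Schottky, using $\ker\mathrm{Ad}=Z$) concludes. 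So you work at the level of representations and explicit analytic equivalences, where the paper works at the level of associated bundles and quotes the already-established $GL_{n}$ case.

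That said, the obstacle you flag is not an artifact of your method, and your closing observation deserves to be taken seriously. Take $G=PGL_{2}\mathbb{C}$ and the commuting pair $(A,B)$ given by the images of $\tilde{A}=\mathrm{diag}(i,-i)$ and $\tilde{B}=\left(\begin{smallmatrix}0 & 1\\ -1 & 0\end{smallmatrix}\right)$, so that $[\tilde{A},\tilde{B}]=-I$. The flat bundle $E_{(A,B)}$ then has nontrivial topological type $\delta(E_{(A,B)})=[\tilde{A},\tilde{B}]\in\pi_{1}(PGL_{2}\mathbb{C})\cong\mathbb{Z}_{2}$ (the standard commutator-of-lifts computation, as in \cite{GoldmanTopCompSR}), whereas Theorem \ref{thm:Schottky bdl has trivial type} forces every Schottky bundle to be topologically trivial. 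This is exactly the tension you identify, and it localizes inside the paper's own argument: in the proof of Proposition \ref{AdEToESchottky}, the hypothesis ``$\mathrm{Ad}(E)$ is a Schottky vector bundle'' is read as ``$\mathrm{Ad}_{\rho}(\alpha_{i})$ is the identity for the given $\rho$,'' but being Schottky only means $\mathrm{Ad}(E)\cong V_{\sigma}$ for \emph{some} strict Schottky $\sigma$, and the analytic equivalence between $\mathrm{Ad}_{\rho}$ and $\sigma$ need not descend from an equivalence at the level of $G$. So the missing idea in your proposal --- controlling the component group of the centralizer so as to reach a common torus --- is precisely the point where the statement can only be rescued on the topologically trivial locus (automatic when $DG$ is simply connected, e.g. $G=GL_{n}\mathbb{C}$ or $SL_{n}\mathbb{C}$, but not for adjoint groups), and any correct argument must invoke that restriction where your reduction stalls.
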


\section*{Acknowledgments}

We thank I. Biswas, E. Franco, P. B. Gothen, C. Meneses-Torres and
A. Oliveira for several useful discussions on Schottky bundles and
related subjects, and the referees for clarifying comments. The last
author thanks the organizers of the Simons Center for Geometry and
Physics workshop on Higgs bundles, and L. Schaposnik and D. Baraglia
for details on their construction of (A,B,A) branes.

\section{Schottky Representations\label{sec:Schottky-Representations}}

Given a compact Riemann surface $X$ of genus $g\geq2$, the classical
Schottky uniformization theorem (see \cite{Ford,Bers75}) states that
$X$ is isomorphic to a quotient $\left.\Omega_{\Sigma}\right/\Sigma$,
where $\Sigma\subset PSL_{2}\mathbb{C}$ is a Schottky group and $\Omega_{\Sigma}\subset\mathbb{C}\mathbb{P}^{1}$
is the corresponding region of discontinuity in the Riemann sphere.
Schottky groups are finitely generated free purely loxodromic subgroups
of the Möbius group $PSL_{2}\mathbb{C}$ (see also \cite{Ms}), and
so, $\Sigma$ is the image of a free group $F_{g}$, of $g$ generators,
under a homomorphism $\rho:F_{g}\to PSL_{2}\mathbb{C}$. Naturally,
conjugate homomorphisms define isomorphic surfaces.

In this section, we consider the space of isomorphism classes of representations
of $F_{g}$ into a general complex reductive algebraic group $G$,
and prove some properties of the corresponding algebraic variety.
This is an extension of the notion of Schottky representations studied
in \cite{Florentino}, which were associated to representations of
$F_{g}$ into $GL_{n}\mathbb{C}$.

We begin by fixing some notation. Denote by $\pi_{1}=\pi_{1}(X)$
the fundamental group of $X$, and fix generators $\alpha_{i}$, $\beta_{i}$,
$i=1,\cdots,g$, of $\pi_{1}$ giving the usual presentation 
\begin{equation}
\pi_{1}=\left\langle \alpha_{1},\cdots,\alpha_{g},\beta_{1},\cdots,\beta_{g}\,|\ {\textstyle \prod_{i=1}^{g}}\alpha_{i}\beta_{i}\alpha_{i}^{-1}\beta_{i}^{-1}=1\right\rangle .\label{eq:fund-group}
\end{equation}
Let $G$ be a complex connected reductive algebraic group and denote
by $F_{g}$ a fixed free group of rank $g$, with $g$ fixed generators
$\gamma{}_{1},\cdots,\gamma_{g}$. Since $G$ is algebraic, and $\pi_{1}$
and $F_{g}$ are finitely presented, both $\hom(\pi_{1},G)$ and $\hom(F_{g},G)$
are affine algebraic varieties.

The reductive group $G$ acts by conjugation on $\hom(\pi_{1},G)$
and hence, one can define a geometric invariant theory (GIT) quotient,
called the $G$-character variety of $\pi_{1}$ (also called the \emph{Betti
space} in the context of the non-abelian Hodge theory, see \cite{Simpson}),
as 
\begin{equation}
\mathbb{B}:=\hom(\pi_{1},G)\quot G.\label{eq:Betti-space}
\end{equation}
This is a categorical quotient which, as an affine algebraic variety,
is the maximal spectrum of the $\mathbb{C}$-algebra of $G$-invariant
regular functions in $\mathbb{C}[\hom(\pi_{1},G)]$ (see, for example
\cite[Theorem 3.5]{Newstead}).

\subsection{Schottky representations\label{sec:Schottky-representations}}

Denote by $e\in G$, the unit element of $G$, and by $Z=Z_{G}$ the
center of $G$. 
\begin{defn}
\label{Schottky rep general} A representation $\rho:\pi_{1}\rightarrow G$
is called:
\begin{enumerate}
\item a \emph{Schottky representation} if $\rho(\alpha_{i})\in Z$ for all
$i\in\{1,\cdots,g\}$ 
\item a\textbf{ }\emph{strict Schottky representation}\textbf{ }if $\rho(\alpha_{i})=e$
for all $i\in\{1,\cdots,g\}$ 
\end{enumerate}
The set of Schottky representations is denoted by $\mathcal{S}\subset\hom(\pi_{1},G)$
and the strict ones by $\mathcal{S}_{s}$. Of course, $\mathcal{S}_{s}\subset\mathcal{S}$
and they coincide when $Z=\{e\}$ (i.e., for adjoint groups). 
\end{defn}
For a useful alternative characterization of Schottky representations,
consider the natural short exact sequence of groups 
\[
1\to\ker\varphi\hookrightarrow\pi_{1}\stackrel{\varphi}{\to}F_{g}\to1
\]
where $\varphi$ is the natural epimorphism given, in terms of the
generators, by 
\[
\varphi(\alpha_{i})=e,\quad\mbox{and }\varphi(\beta_{i})=\gamma_{i},\quad\forall i=1,\cdots,g,
\]
so that $\ker\varphi$ is the normal subgroup of $\pi_{1}$ generated
by all $\alpha_{i}$. Schottky representations can also be defined
with respect to the map $\varphi$ as in the following lemma, whose
proof is straightforward (see also \cite{FlorentinoLudsteck}). 
\begin{lem}
\label{lem:alternative-def}Let $\rho\in\hom(\pi_{1},G)$ and let
$\varphi:\pi_{1}\to F_{g}$ be as above. Then
\begin{enumerate}
\item $\rho$ is a Schottky representation if and only if $\rho(\ker\varphi)\subset Z$; 
\item $\rho$ is a\textbf{ }strict Schottky representation\textbf{ }if and
only if $\rho(\ker\varphi)=\{e\}$. 
\end{enumerate}
\end{lem}
Using our fixed generators, we can see $\mathcal{S}$ as an algebraic
subvariety of $\hom(\pi_{1},G)$, isomorphic to $(Z\times G)^{g}$
(and $\mathcal{S}_{s}$ as a smooth subvariety of $\mathcal{S}$,
isomorphic to $G^{g}$). A Schottky representation $\rho\in\mathcal{S}\subset\hom(\pi_{1},G)$
may also be viewed as a representation 
\[
\rho_{F}=(\rho_{1},\rho_{2}):F_{g}\to Z\times G.
\]
Indeed, given $\rho\in\mathcal{S}$, define $\rho_{1}:F_{g}\to Z$,
and $\rho_{2}:F_{g}\to G$ by 
\begin{equation}
\rho_{F}(\gamma_{i})=(\rho_{1}(\gamma_{i}),\rho_{2}(\gamma_{i})):=\left(\rho(\alpha_{i}),\,\rho(\beta_{i})\right)\in Z\times G,\quad\quad i=1,\cdots,g.\label{eq:schottkyrep}
\end{equation}
Conversely, given $\rho_{F}=(\rho_{1},\rho_{2}):F_{g}\to Z\times G$,
we obtain a Schottky representation $\rho\in\mathcal{S}\subset\hom(\pi_{1},G)$
defined by setting $\rho(\alpha_{i}):=\rho_{1}(\gamma_{i})$ and $\rho(\beta_{i}):=\rho_{2}(\gamma_{i})$,
$i=1,\cdots,g$. It is clear that this defines an inclusion of algebraic
varieties 
\begin{equation}
\psi:\mathcal{\hom}(F_{g},Z\times G)\hookrightarrow\hom(\pi_{1},G),\label{eq:inclusion}
\end{equation}
identifying $\hom(F_{g},Z\times G)$ with its image, which is precisely
$\mathcal{S}$. The strict Schottky locus $\mathcal{S}_{s}$ is then
identified with $\hom(F_{g},\{e\}\times G)\simeq\hom(F_{g},G)\simeq G^{g}$,
where the last isomorphism is the evaluation map: $(\sigma:F_{g}\to G)\mapsto\left(\sigma(\gamma_{1}),\cdots,\sigma(\gamma_{g})\right).$
\begin{rem}
\label{rem:Schottky-reps} Our identifications depend on the choice
of generators for $\pi_{1}$ and $F_{g}$, but the algebraic structure
is independent of those choices (different choices provide isomorphic
varieties), as can be easily seen. 
\end{rem}
It is clear that the conjugation action of the reductive group $G$
on $\hom(\pi_{1},G)$ restricts to an action on $\mathcal{S}$ and
on $\mathcal{S}_{s}$. In terms of the identification $\mathcal{S}\cong\hom(F_{g},Z\times G)$,
each element $g\in G$ acts as follows: 
\begin{equation}
(g\cdot\rho_{F})(\gamma)=\left(\rho_{1}(\gamma),\ g\,\rho_{2}(\gamma)\,g^{-1}\right)\quad\mbox{for all }\gamma\in F_{g},\label{eq:accao}
\end{equation}
where $\rho_{F}=(\rho_{1},\rho_{2})$ as above. As before, there exist
a GIT quotient 
\[
\mathbb{S}:=\mathcal{S}\quot G\cong\Hom\left(F_{g},Z\times G\right)\quot G,
\]
which we call the \emph{Schottky space} (in particular, it is a character
variety of $F_{g}$). Moreover, since $\psi:\Hom\left(F_{g},Z\times G\right)\hookrightarrow\hom(\pi_{1},G)$
in \eqref{eq:inclusion} is clearly a $G$-equivariant inclusion of
affine algebraic varieties, in view of Equation \eqref{eq:schottkyrep}
and \eqref{eq:inclusion}, we have shown the following. 
\begin{prop}
\label{prop:SchottkyAndFreeGroup}There are the following morphisms
between algebraic $G$-varieties: 
\[
\mathcal{S}_{s}\cong\Hom(F_{g},G)\cong G{}^{g}\quad\subset\quad\mathcal{S}\cong\Hom(F_{g},Z\times G)\cong(Z\times G)^{g}\quad\subset\quad\hom(\pi_{1},G).
\]
In particular, $\mathcal{S}$ and $\mathcal{S}_{s}$ are smooth. In
turn, these induce morphisms of affine GIT quotients: 
\[
\mathbb{S}_{s}=\mathcal{S}_{s}\quot G\cong G^{g}\quot G\quad\subset\quad\mathbb{S}=\mathcal{S}\quot G\cong(Z\times G)^{g}\quot G\quad\subset\quad\mathbb{B}=\hom(\pi_{1},G)\quot G.
\]

\end{prop}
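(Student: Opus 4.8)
The plan is to unwind all the isomorphisms explicitly and verify each one as an honest morphism (indeed isomorphism) of affine algebraic varieties, then apply functoriality of the GIT quotient. First I would recall the map $\psi$ from \eqref{eq:inclusion}: a pair $\rho_F=(\rho_1,\rho_2)\colon F_g\to Z\times G$ is sent to the representation $\rho\in\hom(\pi_1,G)$ with $\rho(\alpha_i)=\rho_1(\gamma_i)$ and $\rho(\beta_i)=\rho_2(\gamma_i)$. Because $Z$ is central, the relation $\prod_i\alpha_i\beta_i\alpha_i^{-1}\beta_i^{-1}=1$ is automatically satisfied for such $\rho$, so $\psi$ genuinely lands in $\hom(\pi_1,G)$; it is a morphism of affine varieties since it is given by polynomial (coordinate) expressions on the generators, and Lemma~\ref{lem:alternative-def} identifies its image set-theoretically with $\mathcal{S}$. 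The evaluation isomorphisms $\hom(F_g,H)\cong H^g$ for $H=G$ and $H=Z\times G$ are standard: since $F_g$ is free on $\gamma_1,\dots,\gamma_g$, a homomorphism is uniquely and freely determined by the images of the generators, and this correspondence is a biregular map of affine varieties in either direction.

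Next I would establish the two chain inclusions at the level of $\mathcal{S}_s\subset\mathcal{S}\subset\hom(\pi_1,G)$. The inclusion $\mathcal{S}\hookrightarrow\hom(\pi_1,G)$ is exactly $\psi$ above. For $\mathcal{S}_s\hookrightarrow\mathcal{S}$, observe from Definition~\ref{Schottky rep general} and Lemma~\ref{lem:alternative-def}(2) that under the identification $\mathcal{S}\cong\hom(F_g,Z\times G)\cong(Z\times G)^g$, the strict locus corresponds to those tuples whose $Z$-components are all $e$, i.e.\ to $\{e\}^g\times G^g\cong\hom(F_g,\{e\}\times G)\cong\hom(F_g,G)\cong G^g$; this is a closed subvariety, and the inclusion is the obvious morphism. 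Smoothness of $\mathcal{S}$ and $\mathcal{S}_s$ is then immediate, since $(Z\times G)^g$ and $G^g$ are smooth: $G$ is a smooth variety (being a connected algebraic group) and $Z$ is closed in $G$, hence a (smooth) algebraic group as well, and products and powers of smooth varieties are smooth.

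Finally I would pass to the GIT quotients. The key point is $G$-equivariance of all the maps with respect to the conjugation action, which is recorded in \eqref{eq:accao}: $G$ acts trivially on the $\rho_1$ (central) component and by conjugation on $\rho_2$, matching the restriction of the conjugation action on $\hom(\pi_1,G)$. Since $\psi$ is a $G$-equivariant inclusion of affine $G$-varieties, and the evaluation identifications are likewise $G$-equivariant (with the stated action on $(Z\times G)^g$ and $G^g$), functoriality of the categorical/GIT quotient $\quot G$ induces the displayed morphisms $\mathbb{S}_s\to\mathbb{S}\to\mathbb{B}$, together with the isomorphisms $\mathbb{S}_s\cong G^g\quot G$ and $\mathbb{S}\cong(Z\times G)^g\quot G$. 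I would note that one should only claim \emph{morphisms} between the quotients, not inclusions, since the categorical quotient of a closed equivariant subvariety need not embed into the quotient of the ambient space; this is precisely why the proposition states ``morphisms between algebraic $G$-varieties'' rather than asserting the quotient maps are injective.

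The main obstacle, such as it is, is bookkeeping rather than depth: one must check carefully that the central relation in $\pi_1$ is killed automatically for Schottky $\rho$ (so that $\psi$ is well defined as a map into $\hom(\pi_1,G)$), and that the conjugation action transported through all identifications agrees with \eqref{eq:accao}. Everything else is the standard dictionary between free-group representation varieties and Cartesian powers of the target group, together with the functoriality of GIT quotients for equivariant morphisms of affine varieties.
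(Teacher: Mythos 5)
Your argument follows essentially the same route as the paper: the paper derives this proposition directly from the construction of $\psi$ in \eqref{eq:inclusion}, the identification \eqref{eq:schottkyrep}, and the $G$-equivariance \eqref{eq:accao}, exactly the ingredients you assemble. Your explicit check that the surface relation $\prod_i\alpha_i\beta_i\alpha_i^{-1}\beta_i^{-1}=1$ is automatically satisfied because each commutator $\rho(\alpha_i)\rho(\beta_i)\rho(\alpha_i)^{-1}\rho(\beta_i)^{-1}$ collapses to $e$ by centrality of $\rho(\alpha_i)$ is the right well-definedness point, which the paper leaves implicit, and your verification of equivariance of the evaluation identifications is likewise correct.

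There is, however, one genuine error in your last step, and it causes your proof to establish strictly less than the proposition asserts. You claim that ``the categorical quotient of a closed equivariant subvariety need not embed into the quotient of the ambient space'' and on that basis decline to prove the inclusions $\mathbb{S}_{s}\subset\mathbb{S}\subset\mathbb{B}$ appearing in the second display. In the present setting this hedge is false: for a reductive group $G$ acting on an affine variety $V$ over $\mathbb{C}$ and a closed $G$-stable subvariety $W\subset V$, the restriction map $\mathbb{C}[V]^{G}\to\mathbb{C}[W]^{G}$ is surjective (apply the Reynolds operator, or use that taking $G$-invariants is exact on rational $G$-modules in characteristic zero), so the induced map $W\quot G\to V\quot G$ is a closed immersion, in particular injective. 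Since $\mathcal{S}$ is closed in $\hom(\pi_{1},G)$ (it is cut out by the closed conditions $\rho(\alpha_{i})\in Z$, and $Z$ is Zariski closed in $G$) and $\mathcal{S}_{s}$ is closed in $\mathcal{S}$, the induced quotient maps really are inclusions, exactly as the proposition's display states and as the paper later uses (for instance to obtain $\mathbb{S}^{\mathsf{gd}}\subset\mathbb{B}^{\mathsf{gd}}$ and the embedding of tangent spaces in \eqref{eq:goodtangentspaces}). Replacing your disclaimer with this one-line closed-immersion argument completes the proof.
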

Note that, because the conjugation action is trivial on $Z$, we can
also write 
\begin{equation}
\mathbb{S}\cong(Z\times G)^{g}\quot G=Z^{g}\times(G{}^{g}\quot G)=Z^{g}\times\mathbb{S}_{s}.\label{eq:S=00003DZvezesS_s}
\end{equation}
The GIT quotient under $G$ of an irreducible variety is irreducible.
Thus, $\mathbb{S}_{s}\cong G^{g}\quot G$ is irreducible. However,
$\mathbb{S}$ can have several irreducible components, in bijection
with the components of $Z^{g}$. It is well known that the connected
component of the identity of $Z$ is an algebraic torus $Z^{\circ}$,
and the quotient $Z_{f}:=\left.Z\right/Z^{\circ}$ is finite. 
\begin{prop}
\label{Prop:many-irred-components} All irreducible components of
$\mathbb{S}$ are isomorphic to 
\[
\Hom\left(F_{g},Z^{\circ}\times G\right)\quot G\cong\left(Z^{\circ}\right)^{g}\times\left(G^{g}\quot G\right)\cong\left(Z^{\circ}\right)^{g}\times\mathbb{S}_{s},
\]
and the number of irreducible components of $\mathbb{S}$ is given
by $\left|Z_{f}\right|^{g}$. \end{prop}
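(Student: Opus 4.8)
The plan is to leverage the product decomposition $\mathbb{S}\cong Z^{g}\times\mathbb{S}_{s}$ established in \eqref{eq:S=00003DZvezesS_s}, together with the irreducibility of $\mathbb{S}_{s}\cong G^{g}\quot G$ noted just above it, and thereby reduce the entire statement to understanding the irreducible components of $Z^{g}$. Once that is done, both the count and the isomorphism type of the components of $\mathbb{S}$ will follow formally.

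First I would analyse the group $Z$. Since $G$ is a connected complex reductive group, its center $Z$ is a diagonalizable algebraic group over $\mathbb{C}$; in characteristic zero it is reduced, hence smooth, so its irreducible and connected components coincide and are the cosets of the identity component. By definition $Z^{\circ}$ is that identity component (an algebraic torus) and $Z_{f}=Z/Z^{\circ}$ is the finite component group. Choosing a representative $z$ in each coset, left translation $x\mapsto zx$ gives an isomorphism of varieties $Z^{\circ}\xrightarrow{\sim}zZ^{\circ}$, so $Z$ is the disjoint union of exactly $|Z_{f}|$ copies of $Z^{\circ}$, and these cosets are precisely its irreducible components. Taking the $g$-fold product, the irreducible components of $Z^{g}$ are the products of components, indexed by $(Z_{f})^{g}$: there are $|Z_{f}|^{g}$ of them, each isomorphic to $(Z^{\circ})^{g}$.

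Next I would pass to the product with $\mathbb{S}_{s}$. Over the algebraically closed field $\mathbb{C}$, a product of two irreducible varieties is again irreducible; consequently, for any variety $A$ with irreducible components $A_{1},\dots,A_{k}$ and any irreducible $B$, the irreducible components of $A\times B$ are exactly $A_{1}\times B,\dots,A_{k}\times B$. Applying this with $A=Z^{g}$ and $B=\mathbb{S}_{s}$ (irreducible, being the GIT quotient of the irreducible variety $G^{g}$), I conclude that $\mathbb{S}\cong Z^{g}\times\mathbb{S}_{s}$ has exactly $|Z_{f}|^{g}$ irreducible components, each isomorphic to $(Z^{\circ})^{g}\times\mathbb{S}_{s}$. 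To match the claimed model, I would run the argument leading to \eqref{eq:S=00003DZvezesS_s} verbatim but with $Z$ replaced by its subtorus $Z^{\circ}$ (the conjugation action being trivial on the central factor), which yields $\Hom(F_{g},Z^{\circ}\times G)\quot G\cong(Z^{\circ})^{g}\times(G^{g}\quot G)\cong(Z^{\circ})^{g}\times\mathbb{S}_{s}$.

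The only point requiring genuine care is the general fact that components of a product distribute as $A_{i}\times B$ when $B$ is irreducible; this is exactly where algebraic closedness of $\mathbb{C}$ enters, guaranteeing that each $A_{i}\times B$ stays irreducible (rather than splitting further), and it is the main structural input of the argument, even though it is standard. Everything else — the coset description of $Z$ and the irreducibility of $\mathbb{S}_{s}$ — is routine and already essentially recorded in the text preceding the statement.
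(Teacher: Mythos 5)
Your proof is correct and follows essentially the same route as the paper's: both use the decomposition $\mathbb{S}\cong Z^{g}\times\mathbb{S}_{s}$ from \eqref{eq:S=00003DZvezesS_s} together with the variety-level splitting $Z\cong Z_{f}\times Z^{\circ}$ to exhibit $\mathbb{S}\cong(Z_{f})^{g}\times(Z^{\circ})^{g}\times\mathbb{S}_{s}$. The only difference is that you spell out what the paper leaves implicit — the coset description of $Z$, and the fact that over $\mathbb{C}$ the irreducible components of $A\times B$ with $B$ irreducible are exactly $A_{i}\times B$ — which is a welcome amplification rather than a departure.
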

\begin{proof}
As a variety, we can write $Z$ as a cartesian product of the above
subgroups, $Z=Z_{f}\times Z^{\circ}$. So, we get the following isomorphism
of varieties, from Equation \eqref{eq:S=00003DZvezesS_s} 
\[
\mathbb{S}\cong Z^{g}\times\mathbb{S}_{s}\cong\left(Z_{f}\right)^{g}\times\left(Z^{\circ}\right)^{g}\times\mathbb{S}_{s}\cong\left(Z_{f}\right)^{g}\times\Hom\left(F_{g},Z^{\circ}\times G\right)\quot G
\]
which immediately proves the proposition. \end{proof}
\begin{rem}
(1) Clearly, $\mathbb{S}=\mathbb{S}_{s}$, hence irreducible, when
the center of $G$ is trivial. \\
 (2) Replacing $F_{g}$ by other finitely generated groups can give
very different results on components. For example, when $G=PSL_{2}\mathbb{C}$
it is known that $\hom(\pi_{1},G)\quot G$ has several irreducible
components, and only two of them correspond to representations that
uniformize a Riemann surface (Kleinian representations). On the other
hand $\hom(\pi_{1},SL_{2}\mathbb{C})\quot SL_{2}\mathbb{C}$ is irreducible
(see \cite{GoldmanTopCompSR}). 
\end{rem}

\subsection{Good and unitary representations\label{sub:Good-and-unitary-rep}}

Although $\mathcal{S}$ and $\mathcal{S}_{s}$ are smooth, the algebraic
varieties $\mathbb{S}$ and $\mathbb{S}_{s}$ are singular in general.
The notion of a good representation allows us to consider smooth points
of the GIT quotient, as we will see. Let $\Gamma$ be a finitely generated
group, for example the fundamental group of a compact manifold. Given
a representation $\rho:\Gamma\to G$ we denote by 
\[
Z(\rho)=\{h\in G:\ \rho(\gamma)h=h\rho(\gamma)\ \forall\gamma\in\Gamma\}
\]
its stabilizer in $G$, and denote by $G\cdot\rho$ its $G$-orbit
in the algebraic variety $\hom(\Gamma,G)$. Recall the following standard
definitions. 
\begin{defn}
Let $\rho:\Gamma\to G$ be a representation. We say that $\rho$ is:

(a) \emph{polystable} if $G\cdot\rho$ is (Zariski)-closed,

(b) \emph{reducible} if $\rho(\Gamma)$ is contained in a proper parabolic
subgroup of $G$,

(c) \emph{irreducible} if it is not reducible,

(d) \emph{good} if $\rho$ is irreducible and $Z(\rho)=Z$. \end{defn}
\begin{rem}
Note that $\rho$ is polystable if and only if $Z(\rho)$ is a reductive
group itself, and it is irreducible if and only if $Z(\rho)$ is reductive
and a finite extension of $Z$ (see \cite{Sikora}). Moreover, $\rho$
is irreducible if and only if it is \emph{stable} in the appropriate
affine GIT sense (see \cite{FlorentinoCasimiro}).
\end{rem}
Now we apply these notions to the case of Schottky representations. 
\begin{defn}
\label{def: good Schottky rep}A representation $\rho\in\mathcal{S}\subset\Hom\left(\pi_{1},G\right)$
is said to be \emph{polystable}\textbf{ }(resp. \emph{irreducible},
\emph{good}) if $\rho$ is polystable (resp. irreducible, good) as
an element of $\Hom\left(\pi_{1},G\right)$. 
\end{defn}
Denote the set of all good\textbf{ }(resp. good Schottky) representations
by \textbf{$\Hom^{\mathsf{gd}}\left(\pi_{1},G\right)$} (resp. $\mathcal{S}^{\mathsf{gd}}$).
Since these notions are well defined under conjugation, we can define
the corresponding quotient spaces: 
\[
\mathbb{B}^{\mathsf{gd}}:=\Hom^{\mathsf{gd}}\left(\pi_{1},\,G\right)\quot G\quad\mbox{and}\quad\mathbb{S}^{\mathsf{gd}}:=\mathcal{S}^{\mathsf{gd}}\quot G,
\]
and, from Proposition \ref{prop:SchottkyAndFreeGroup}, we have the
inclusion $\mathbb{S}^{\mathsf{gd}}\subset\mathbb{B}^{\mathsf{gd}}$. 

The sets of good, polystable and irreducible representations are Zariski
open in $\mathcal{S}$ (see for example \cite{Sikora}). By \cite[Lemma 4.6]{Martin1}
there exists a good representation in $\Hom\left(\pi_{1},G\right)$,
that is, \textbf{$\Hom^{\mathsf{gd}}\left(\pi_{1},\,G\right)\neq\varnothing$},
if $X$ has genus $g\geq2$. Note that the case $g=1$ is slightly
different (see Section 9).

To show that \textbf{$\mathcal{S}^{\mathsf{gd}}$} is nonempty, we
start by relating the relevant properties of $\rho\in\mathcal{S}$
with the corresponding properties of $\rho_{2}:F_{g}\to G$. 
\begin{prop}
\label{prop:good Schottky as rho1 good }Let $\rho\in\mathcal{S}\subset\Hom\left(\pi_{1},G\right)$
be given by $\rho_{F}=\left(\rho_{1},\rho_{2}\right):F_{g}\to Z\times G$
as in (\ref{eq:schottkyrep}). Then:

(a) $Z(\rho)=Z(\rho_{2})\subset G$,

(b) $\rho$ is irreducible if and only if $\rho_{2}$ is irreducible,

(c) $\rho$ is a good Schottky representation if and only if $\rho_{2}$
is a good representation of $F_{g}$. \end{prop}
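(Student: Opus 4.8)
The plan is to prove the three parts in the order (a), (b), (c), with (c) following formally from the first two. In all three the underlying mechanism is the same observation: since $\rho(\alpha_i)=\rho_1(\gamma_i)$ lies in the center $Z$, these generators of $\rho(\pi_1)$ impose no constraint beyond what the $\rho(\beta_i)=\rho_2(\gamma_i)$ already impose, whether one is testing commutation (for the stabilizer) or containment in a parabolic (for reducibility). In particular the whole statement reduces to reading off the definitions on the fixed generators, together with one standard structural fact about $G$.

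For part (a) I would compute $Z(\rho)$ directly from generators. An element $h\in G$ lies in $Z(\rho)$ if and only if it commutes with each $\rho(\alpha_i)$ and each $\rho(\beta_i)$. Since $\rho(\alpha_i)\in Z$ commutes with every element of $G$, the conditions $h\,\rho(\alpha_i)=\rho(\alpha_i)\,h$ are automatic, so $h\in Z(\rho)$ if and only if $h$ commutes with all $\rho(\beta_i)=\rho_2(\gamma_i)$. As $\rho_2(F_g)$ is generated by the $\rho_2(\gamma_i)$, this is exactly the condition $h\in Z(\rho_2)$, giving $Z(\rho)=Z(\rho_2)$.

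For part (b) I would invoke the fact that the center of a connected reductive group lies in every parabolic subgroup. This rests on $C_G(T)=T$ for any maximal torus $T$ (a standard property of connected reductive $G$): any $z\in Z$ commutes with $T$, hence $z\in C_G(T)=T\subseteq B\subseteq P$ for any parabolic $P\supseteq B$. Consequently, since $\rho(\pi_1)$ is generated by the central elements $\rho(\alpha_i)$ together with the $\rho(\beta_i)=\rho_2(\gamma_i)$, and the former lie automatically in every parabolic, one has $\rho(\pi_1)\subseteq P$ if and only if $\rho_2(F_g)\subseteq P$, for every proper parabolic $P$. Thus $\rho$ is reducible if and only if $\rho_2$ is reducible, which is the asserted equivalence of irreducibility.

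Part (c) is then immediate: $\rho$ is good if and only if it is irreducible and $Z(\rho)=Z$; by (b) irreducibility of $\rho$ is equivalent to that of $\rho_2$, and by (a) the condition $Z(\rho)=Z$ is equivalent to $Z(\rho_2)=Z$, so $\rho$ is good if and only if $\rho_2$ is good. The only genuinely nontrivial input is the group-theoretic fact used in (b), that central elements lie in every parabolic; everything else is bookkeeping on generators. The one small point to confirm carefully is precisely that $Z\subseteq T$ for every maximal torus (equivalently $C_G(T)=T$), on which the parabolic containment—and hence the cleanness of part (b)—depends.
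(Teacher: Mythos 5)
Your proof is correct and takes essentially the same route as the paper: part (a) via the observation that $C(\rho(\alpha_i))=G$ since $\rho(\alpha_i)\in Z$, part (b) by transferring parabolic containment using the fact that $Z\subseteq P$ for every parabolic $P$, and part (c) formally from (a) and (b). The only difference is cosmetic: the paper simply cites the fact that every parabolic contains the center, whereas you supply its standard justification via $Z\subseteq C_G(T)=T\subseteq B\subseteq P$ (valid since every parabolic contains a maximal torus).
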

\begin{proof}
(a) Denote by $C(h)$ the centralizer of an element $h\in G$, $C(h):=\{g\in G:\ hg=gh\}$.
Since $\rho$ is completely defined by the image of the generators
of $\pi_{1}$, the stabilizer of $\rho$ is the intersection of the
centralizers of the images of the generators $\alpha_{i},\beta_{i}$
of $\pi_{1}$ and $\gamma_{i}$ of $F_{g}$: 
\[
Z\left(\rho\right)={\textstyle \bigcap_{i=1}^{g}C(\rho(\alpha_{i}))\bigcap_{i=1}^{g}C(\rho(\beta_{i}))=\bigcap_{i=1}^{g}C(\rho_{2}(\gamma_{i}))}=Z(\rho_{2}),
\]
because $\rho(\alpha_{i})=\rho_{1}\left(\gamma_{i}\right)\in Z$,
which implies $C(\rho(\alpha_{i}))=G$.

(b) Let us suppose that $\rho:\pi_{1}\to G$ is reducible. By definition,
$\rho\left(\pi_{1}\right)\subset P$ for some proper parabolic subgroup
$P\subset G$. This means that $\rho\left(\alpha_{i}\right),\rho\left(\beta_{i}\right)\in P,\,\forall i=1,\cdots,g$.
So, 
\[
\rho\left(\beta_{i}\right)=\rho_{2}\left(\gamma_{i}\right)\in P,\,\forall i\quad\Leftrightarrow\quad\rho_{2}\left(F_{g}\right)\subset P,
\]
proving that $\rho_{2}$ is reducible. The proof of the converse is
analogous, using again $\rho(\alpha_{i})=\rho_{1}(\gamma_{i})\in Z$,
and also the fact that any parabolic subgroup contains the center
of $G$.

(c) This follows immediately from (a) and (b). 
\end{proof}
Recall that, for a connected reductive algebraic group $G$ over $\mathbb{C}$,
there exists a maximal compact connected \emph{real} Lie group $K$
whose complexification coincides with $G$. Ramanathan showed that
the moduli space of semistable $G$-bundles over $X$, which admit
a flat connection, is homeomorphic to $\hom(\pi_{1},K)/K$ (\cite{Ramanathan1}).

We now show that good Schottky representations exist, and these can
be taken to be unitary, as well. 
\begin{lem}
\label{lem: density} Let $K$ be a maximal compact subgroup of $G$.
If $H$ is a subgroup of $K$ which is dense in the manifold topology
of $K$, then $Z_{G}(H)=Z_{G}(K)=Z$. \end{lem}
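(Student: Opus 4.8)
The plan is to establish the two asserted equalities separately: first $Z_{G}(H)=Z_{G}(K)$, exploiting only that $H$ is dense in $K$, and then the structural identity $Z_{G}(K)=Z$, which uses that $K$ is a maximal compact real form of $G$. Since $H\subseteq K$, the inclusion $Z_{G}(K)\subseteq Z_{G}(H)$ is trivial, and likewise $Z\subseteq Z_{G}(K)$ is immediate because central elements commute with everything; so the work lies in the two reverse inclusions.

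For the reverse inclusion in the first equality, I would fix $g\in Z_{G}(H)$ and consider the continuous commutator map $c_{g}:K\to G$, defined by $c_{g}(k)=g\,k\,g^{-1}\,k^{-1}$. By hypothesis $c_{g}$ is identically $e$ on $H$. Since $c_{g}$ is continuous, $H$ is dense in $K$, and $G$ is Hausdorff, the map $c_{g}$ must then vanish on all of $K$. Hence $g\in Z_{G}(K)$, giving $Z_{G}(H)\subseteq Z_{G}(K)$ and therefore the first equality.

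For the reverse inclusion in the second equality I would pass to Lie algebras. Given $g\in Z_{G}(K)$ and writing $\mathfrak{k}=\mathrm{Lie}(K)$, differentiating the relation $g\,\exp(tX)\,g^{-1}=\exp(tX)$ at $t=0$ shows that $\Ad(g)$ fixes $\mathfrak{k}$ pointwise. Because $K$ is a compact real form of the complex reductive group $G$, its complexified Lie algebra is all of $\mathfrak{g}=\mathfrak{k}\oplus i\,\mathfrak{k}$; since $\Ad(g)$ is complex-linear and already the identity on $\mathfrak{k}$, it is the identity on $\mathfrak{g}$. As $G$ is connected, $\ker\Ad=Z$, whence $g\in Z$, completing $Z_{G}(K)\subseteq Z$.

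The density/continuity argument and the trivial inclusions are routine; the genuinely structural step, and the place to be careful, is the identity $Z_{G}(K)=Z$. Here one must invoke that $K$ is connected (valid for a maximal compact subgroup of a connected Lie group), so that $\Ad(g)|_{\mathfrak{k}}=\mathrm{id}$ really does force $g$ to centralize $K$, and the splitting $\mathfrak{g}=\mathfrak{k}\oplus i\,\mathfrak{k}$, which lets complex-linearity of $\Ad(g)$ propagate the fixed-point property from $\mathfrak{k}$ to all of $\mathfrak{g}$. This is the one point where the reductive/complexification hypothesis on $G$ is essential, and I expect it to be the main obstacle rather than the elementary density argument.
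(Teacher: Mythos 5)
Your proof is correct, but it takes a genuinely different route from the paper's. For the key identity $Z_{G}(K)=Z$, the paper argues entirely in the Zariski topology: the centralizer of any subgroup, being an intersection of centralizers of single elements, is an algebraic (hence Zariski-closed) subgroup, and $K$ is Zariski-dense in $G$, so $Z_{G}(K)$ centralizes the Zariski closure of $K$, giving $Z_{G}(K)=Z_{G}(G)=Z$; the equality $Z_{G}(H)=Z_{G}(K)$ is then dispatched by the same closedness-plus-density principle, which your commutator map $c_{g}$ makes explicit in the manifold topology (that part of your argument matches the paper's in substance). Your replacement for the Zariski-density step is infinitesimal: differentiating $g\exp(tX)g^{-1}=\exp(tX)$ shows $\Ad(g)$ fixes $\mathfrak{k}$ pointwise, complex-linearity of $\Ad(g)$ together with $\mathfrak{g}=\mathfrak{k}\oplus i\mathfrak{k}$ forces $\Ad(g)=\mathrm{id}$ on $\mathfrak{g}$, and $\ker\Ad=Z$ for connected $G$ finishes. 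This is in effect a self-contained proof of the fact the paper cites as ``well known'' --- Zariski density of a compact real form is typically proved by exactly such a complexification argument --- so your version is more elementary and transparent at the cost of length, while the paper's is quicker but leans on quoted structure theory. One small correction to your closing commentary: your proof nowhere needs connectedness of $K$ to pass from $\Ad(g)|_{\mathfrak{k}}=\mathrm{id}$ to ``$g$ centralizes $K$'' --- that implication runs opposite to your argument, since $g\in Z_{G}(K)$ is the \emph{hypothesis}; what is genuinely essential is connectedness of $G$, which yields $\ker\Ad=Z$, and which you do invoke at the right moment.
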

\begin{proof}
Being the intersection of centralizers of single elements, the centralizer
of any subgroup of $G$ is an algebraic subgroup of $G$, hence Zariski
closed. In particular, $Z_{G}(K)$ centralizes the Zariski closure
of $K$, which is well known to be $G$. So $Z_{G}(K)=Z_{G}(G)=Z$.
Moreover, since $H$ is dense in $K$, their centralizers are equal,
$Z_{G}(H)=Z_{G}(K)$. 
\end{proof}
Now recall that any connected compact Lie group can be generated by
two elements.\vspace{-0.15cm}
 
\begin{thm}
\cite{Auerbach}\label{thm: gen by two elmts dense in a compact subgroup}
Let $K$ be a connected compact Lie group. Then there are two elements
$c,\,d\in K$ such that the closure of the subgroup they generate,
$\overline{\left\langle c,d\right\rangle }$, equals $K$. Moreover,
the set of such pairs $\left\{ \left(c,\,d\right)\right\} $ is dense
in $K\times K$. \end{thm}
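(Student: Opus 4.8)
The plan is to prove the stronger and cleaner assertion that the set
\[
\mathcal{D}:=\{(c,d)\in K\times K:\ \overline{\langle c,d\rangle}=K\}
\]
of topologically generating pairs is residual (a dense $G_{\delta}$) and of full Haar measure in $K\times K$; both nonemptiness and density then follow at once. First I would record the $G_{\delta}$ structure: fixing a countable basis $\{B_{j}\}_{j\in\mathbb{N}}$ of nonempty open subsets of $K$, a pair $(c,d)$ generates a dense subgroup if and only if every $B_{j}$ contains some word $w(c,d)$ in $c$ and $d$; hence $\mathcal{D}=\bigcap_{j}V_{j}$, where $V_{j}=\{(c,d):\ w(c,d)\in B_{j}\text{ for some word }w\}$ is open, being a union of preimages of $B_{j}$ under the continuous word-evaluation maps $(c,d)\mapsto w(c,d)$. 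By the Baire category theorem it then suffices to prove each $V_{j}$ dense, and this I would deduce from the measure statement.

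The measure statement is established by a Fubini argument after fixing the first coordinate well. The two inputs are: (i) since $K$ is connected, any open subgroup equals $K$, so every \emph{proper} closed subgroup $H\subsetneq K$ satisfies $\dim H<\dim K$ and is therefore Haar-null; and (ii) by the Kronecker--Weyl theorem each maximal torus $T\subset K$ admits topological generators, and these form a full-measure subset of $T$. Combining (ii) with the facts that the regular elements are of full measure in $K$ and each lies in a unique maximal torus, and with the Weyl integration formula, I would conclude that the set $\mathcal{C}$ of elements $c$ that densely generate \emph{their} maximal torus $T_{c}$ has full measure in $K$. For any such $c$, the closed subgroup $H:=\overline{\langle c,d\rangle}$ contains $T_{c}$, so its identity component $H^{\circ}$ is a closed connected subgroup containing the maximal torus $T_{c}$.

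The decisive structural input is now a finiteness statement: a closed connected subgroup $H^{\circ}\supseteq T_{c}$ has Lie algebra $\mathfrak{h}\supseteq\mathfrak{t}$ stable under $\Ad(T_{c})$, so (after complexifying) $\mathfrak{h}_{\mathbb{C}}$ is the sum of $\mathfrak{t}_{\mathbb{C}}$ and the root spaces indexed by a closed subsystem of the root system of $K$. As the root system is finite, only finitely many subalgebras $\mathfrak{h}$, and hence finitely many connected subgroups $H^{\circ}\supseteq T_{c}$, occur; let $\mathcal{F}$ denote the finite family of the \emph{proper} ones. If $(c,d)$ fails to generate $K$, then $H^{\circ}\neq K$, so $H^{\circ}\in\mathcal{F}$ and $d\in N_{K}(H^{\circ})$. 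Each normalizer $N_{K}(H^{\circ})$ is a proper closed subgroup of $K$, because its Lie algebra $\mathfrak{n}_{\mathfrak{k}}(\mathfrak{h})$ is proper: a proper subalgebra containing the Cartan subalgebra $\mathfrak{t}$ is never an ideal of $\mathfrak{k}$, whence $\mathfrak{n}_{\mathfrak{k}}(\mathfrak{h})\subsetneq\mathfrak{k}$. By (i) each $N_{K}(H^{\circ})$ is Haar-null, so for every $c\in\mathcal{C}$ the bad $d$ lie in the finite union $\bigcup_{H^{\circ}\in\mathcal{F}}N_{K}(H^{\circ})$, which is Haar-null in $K$. Fubini then gives that the bad set is null in $K\times K$, so $\mathcal{D}$ has full measure and, in particular, is dense.

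The main obstacle is precisely this last reduction. A priori the proper closed subgroups of $K$ form a continuum (all maximal tori and their conjugates, and so on), so the bad set could be large; the crucial device that tames it is to fix $c$ generating a maximal torus, which confines the relevant subgroups to the \emph{finite} family of connected subgroups containing that fixed torus. Verifying that finiteness, and that the associated normalizers are proper, is the genuine Lie-theoretic content, while the rest is measure theory and Baire category. Alternatively, one may first reduce to the two model cases via the structure $K=Z(K)^{\circ}\cdot K_{\mathrm{der}}$: tori are monothetic by Kronecker--Weyl, and the semisimple factor is treated as above, which keeps the root-theoretic bookkeeping confined to the simple factors.
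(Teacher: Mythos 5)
The paper never proves this statement: it is Auerbach's classical theorem, imported verbatim with the citation \cite{Auerbach} and used as a black box in the proof of the existence of good unitary Schottky representations. So there is no internal proof to compare against; judged on its own, your argument is correct, and it in fact establishes strictly more than the statement asserts, namely that the topologically generating pairs form a dense $G_{\delta}$ of full Haar measure in $K\times K$. Every step I checked is sound: the $G_{\delta}$ description makes the bad set Borel, so Fubini is legitimate; Kronecker--Weyl plus the Weyl integration formula (or simply the nullity of the singular set) gives that the set $\mathcal{C}$ of elements topologically generating their own maximal torus has full measure; for $c\in\mathcal{C}$ the closure $H=\overline{\langle c,d\rangle}$ contains $T_{c}$, so $H^{\circ}$ belongs to the finite family of connected subgroups whose complexified Lie algebras are $\mathfrak{t}_{\mathbb{C}}$ plus a closed set of root spaces; and since $H^{\circ}$ is normal in $H$, a bad $d$ lies in $N_{K}(H^{\circ})$, which is proper because an ideal $\mathfrak{h}\supseteq\mathfrak{t}$ of $\mathfrak{k}$ would contain $[\mathfrak{t}_{\mathbb{C}},\mathfrak{g}_{\alpha}]=\mathfrak{g}_{\alpha}$ for every root $\alpha$ and hence equal $\mathfrak{k}$. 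Your diagnosis of the crux is also right: fixing $c$ to generate a maximal torus is exactly the device that collapses the continuum of proper closed subgroups to a finite list, and this is the mechanism in the standard modern proofs of Auerbach's theorem (the full-measure strengthening is classical folklore, cf.\ Schreier--Ulam and the treatment in Hofmann--Morris). Two minor cautions: your ``words $w(c,d)$'' should be allowed to involve inverses of $c$ and $d$ (harmless for countability, and in a compact group even dispensable, since closed subsemigroups are subgroups); and the alternative reduction sketched at the end via $K=Z(K)^{\circ}\cdot K_{\mathrm{der}}$ is \emph{not} immediate as stated --- density in each factor does not imply density in a product (Goursat-type obstructions when factors share quotients, and the product map from $Z(K)^{\circ}\times K_{\mathrm{der}}$ is only an isogeny) --- so the main argument, which needs no such reduction, is the one to retain.
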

\begin{prop}
\label{prop: existence of good and unitary Schottky rep } Let $g\geq2$.
Then, there is always a good strict Schottky representation $\rho:\pi_{1}\to G$.
Moreover, such a representation can be defined to take values in $K$. \end{prop}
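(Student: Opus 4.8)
The plan is to reduce the statement, via Proposition \ref{prop:good Schottky as rho1 good }(c), to producing a single good representation $\rho_2:F_g\to K\subset G$, and then to build such a $\rho_2$ by hand using the topological generation of $K$ supplied by Theorem \ref{thm: gen by two elmts dense in a compact subgroup}. Recall that a strict Schottky $\rho$ is encoded by $\rho_2(\gamma_i)=\rho(\beta_i)$ (with $\rho_1$ trivial), and that by part (c) of that proposition $\rho$ is a good strict Schottky representation precisely when $\rho_2$ is good. Since $K$ is the chosen connected maximal compact form of $G$, everything takes place inside $K$, which yields the unitarity clause essentially for free.

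First I would invoke Auerbach's theorem (Theorem \ref{thm: gen by two elmts dense in a compact subgroup}) to choose $c,d\in K$ such that $\overline{\langle c,d\rangle}=K$. Here the hypothesis $g\geq2$ enters essentially: I use the two free generators $\gamma_1,\gamma_2$ of $F_g$ to absorb these two topological generators, setting $\rho_2(\gamma_1)=c$, $\rho_2(\gamma_2)=d$, and $\rho_2(\gamma_i)=e$ for $i\geq3$. As $F_g$ is free there are no relations to verify, so this determines a homomorphism $\rho_2:F_g\to K\subset G$. By construction its image contains $\langle c,d\rangle$, hence $\overline{\rho_2(F_g)}=K$; that is, $H:=\rho_2(F_g)$ is a dense subgroup of $K$ in the manifold topology.

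Next I would read off the two defining properties of goodness. For the stabilizer, Lemma \ref{lem: density} applies directly to the dense subgroup $H\subset K$ and gives $Z(\rho_2)=Z_G(H)=Z$. For irreducibility I would argue that $H$ cannot lie in a proper parabolic $P\subset G$: since parabolic subgroups are Zariski closed, $P\supseteq H$ would force $P$ to contain the manifold-closure $K=\overline{H}$, hence also the Zariski closure of $K$, which is all of $G$, contradicting properness. (Alternatively, one may combine $Z(\rho_2)=Z$ with the centralizer criterion for irreducibility quoted in the Remark after the definition of polystable, since $Z$ is reductive and is trivially a finite extension of itself.) Thus $\rho_2$ is irreducible with $Z(\rho_2)=Z$, i.e.\ good.

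Finally, Proposition \ref{prop:good Schottky as rho1 good }(c) upgrades this to the assertion that the associated strict Schottky $\rho:\pi_1\to G$, defined by $\rho(\alpha_i)=e$ and $\rho(\beta_i)=\rho_2(\gamma_i)$, is a good strict Schottky representation; and since all these values lie in $K$, the representation $\rho$ takes values in $K$, proving the unitarity statement. The only genuine content is the passage from combinatorial freeness to honest irreducibility together with a central stabilizer, and this is exactly where the density input of Auerbach's theorem meets Lemma \ref{lem: density}. I expect the one point deserving an explicit word to be the fact that the Zariski closure of the compact group $K$ is all of $G$ (used in the irreducibility step), though this is standard for a maximal compact form of a connected reductive group.
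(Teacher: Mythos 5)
Your proposal is correct and follows essentially the same route as the paper: Auerbach's theorem supplies $c,d$ with $\overline{\langle c,d\rangle}=K$, the representation is defined by sending the $\alpha_i$ to $e$ and placing $c,d$ on two of the $\beta_i$ (which is exactly where $g\geq2$ is used), and Lemma \ref{lem: density} yields $Z(\rho)=Z$. The only difference is cosmetic and in your favor: the paper writes $\rho$ on $\pi_1$ directly and leaves irreducibility implicit in the conclusion ``$Z_G(\rho)=Z$ proves $\rho$ is good,'' whereas you spell out the missing step, either via the parabolic/Zariski-density argument or via Sikora's centralizer criterion from the remark following the definition of polystability.
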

\begin{proof}
Let $c,\,d\in K$ be two elements of $K$, such that $\overline{\left\langle c,d\right\rangle }=K$,
as in Theorem \ref{thm: gen by two elmts dense in a compact subgroup}.
Then we explicitly define a unitary representation $\rho:\pi_{1}\to K$
by: 
\begin{align}
\rho\left(\alpha_{i}\right)=e,\quad\forall i=1,\cdots,g, & \quad\quad\mbox{and }\quad\left\{ \begin{array}{cc}
\rho\left(\beta_{1}\right)=c\\
\rho\left(\beta_{2}\right)=d\\
\rho\left(\beta_{i}\right)=e, & \forall i=3,\cdots,g,
\end{array}\right.\label{eq:const unitary Schottky rep}
\end{align}
Since the subgroup $H:=\left\langle c,d\right\rangle $ is dense in
$K$, the subgroup $\rho(\pi_{1})\subset K$ is also dense in $K$.
So $Z_{G}(\rho)=Z$, by Lemma \ref{lem: density}, which proves that
$\rho$ is a good strict Schottky representation. \end{proof}
\begin{thm}
\label{thm:good rep for g2} Let $g\geq2$. The subsets of good representations\textbf{
$\Hom^{\mathsf{gd}}\left(\pi_{1},\,G\right)$} and $\mathcal{S}^{\mathsf{gd}}$
are Zariski open in \textbf{$\Hom\left(\pi_{1},\,G\right)$} and $\mathcal{S}$,
respectively. A good representation defines a smooth point in the
corresponding geometric quotient. Thus, the geometric quotients $\mathbb{B}^{\mathsf{gd}}$
and $\mathbb{S}^{\mathsf{gd}}$ are complex manifolds, and $\mathbb{S}^{\mathsf{gd}}$
is a complex submanifold of $\mathbb{B}^{\mathsf{gd}}$. \end{thm}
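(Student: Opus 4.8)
The plan is to combine one elementary observation—that the center $Z$ acts trivially by conjugation—with Luna's étale slice theorem. Throughout, write $V$ for either $\Hom(\pi_{1},G)$ or $\mathcal{S}$; both are smooth affine $G$-varieties (the latter by Proposition \ref{prop:SchottkyAndFreeGroup}), with $G$ acting by conjugation, and in each case I work near a good representation $\rho$, which exists for $g\geq2$ by Proposition \ref{prop: existence of good and unitary Schottky rep}.

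For the Zariski openness, the locus of irreducible (equivalently stable) representations is open and the condition $Z(\rho)=Z$ is open, so their intersection $\Hom^{\mathsf{gd}}(\pi_{1},G)$ is open, as already recalled from \cite{Sikora}. Since goodness of $\rho\in\mathcal{S}$ is \emph{defined} as goodness in $\Hom(\pi_{1},G)$, one has $\mathcal{S}^{\mathsf{gd}}=\mathcal{S}\cap\Hom^{\mathsf{gd}}(\pi_{1},G)$, hence open in $\mathcal{S}$; alternatively this follows from Proposition \ref{prop:good Schottky as rho1 good}, since goodness of $\rho$ is equivalent to the open condition that $\rho_{2}$ be good on the $G^{g}$-factor.

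For smoothness, let $\rho$ be good. Then $\rho$ is irreducible, hence stable, so $G\cdot\rho$ is closed in $V$ (closed in $\Hom(\pi_{1},G)$, and then closed in $\mathcal{S}$ because $\mathcal{S}$ is $G$-invariant and closed), and the GIT quotient restricts to a geometric quotient on the good locus. Its stabilizer is $Z(\rho)=Z$, which is reductive, and being central it acts \emph{trivially} on $V$, hence trivially on any slice. Luna's étale slice theorem then furnishes a smooth $Z$-invariant slice $S\ni\rho$ with $T_{\rho}S\oplus T_{\rho}(G\cdot\rho)=T_{\rho}V$, such that $S\quot Z\to V\quot G$ is étale onto a neighbourhood of $[\rho]$; since $Z$ acts trivially, $S\quot Z=S$ is smooth and $[\rho]$ is a smooth point. (Equivalently, $G/Z$ acts freely on the good locus, which is therefore a principal $G/Z$-bundle over the geometric quotient, forcing the smooth base.) Applying this to $V=\Hom(\pi_{1},G)$ and $V=\mathcal{S}$ shows $\mathbb{B}^{\mathsf{gd}}$ and $\mathbb{S}^{\mathsf{gd}}$ are complex manifolds.

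For the submanifold claim, the inclusion $\psi:\mathcal{S}\hookrightarrow\Hom(\pi_{1},G)$ of \eqref{eq:inclusion} is a $G$-equivariant closed immersion of smooth varieties, and both ambient spaces contain $G\cdot\rho$ with the \emph{same} orbit tangent space $B^{1}(\pi_{1},\mathfrak{g}_{\rho})$. One may therefore choose slices compatibly, $S_{\mathcal{S}}=S_{\mathbb{B}}\cap\mathcal{S}$, so that $T_{\rho}S_{\mathcal{S}}\subset T_{\rho}S_{\mathbb{B}}$ is the linear inclusion of normal spaces $T_{\rho}\mathcal{S}/B^{1}\hookrightarrow T_{\rho}\Hom(\pi_{1},G)/B^{1}$; as $Z$ acts trivially on both, the local model of $\mathbb{S}^{\mathsf{gd}}\hookrightarrow\mathbb{B}^{\mathsf{gd}}$ near $[\rho]$ is the closed immersion of smooth slices $S_{\mathcal{S}}\hookrightarrow S_{\mathbb{B}}$, exhibiting $\mathbb{S}^{\mathsf{gd}}$ as a complex submanifold. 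I expect this last step to be the main obstacle: it requires the functoriality of Luna's slice construction with respect to the equivariant closed immersion $\psi$, i.e. that the étale slices for $\mathcal{S}$ and $\Hom(\pi_{1},G)$ can genuinely be chosen so that $S_{\mathcal{S}}=S_{\mathbb{B}}\cap\mathcal{S}$ is smooth and the slice maps commute; everything else reduces to the triviality of the central action.
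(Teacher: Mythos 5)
Your openness step matches the paper's (both rest on \cite[Proposition 33]{Sikora}), and your Luna--slice route to smoothness is a legitimate alternative to the paper's proof, which simply combines the existence statement (Proposition \ref{prop: existence of good and unitary Schottky rep }) with \cite[Proposition 33]{Sikora} for openness and \cite[Corollary 50]{Sikora} for smoothness of $[\rho]$ at good $\rho$. But your argument has one genuine gap, and it is not the step you flagged. You assert that both $V=\Hom(\pi_{1},G)$ and $V=\mathcal{S}$ are \emph{smooth} affine $G$-varieties. That is true for $\mathcal{S}\cong(Z\times G)^{g}$ (Proposition \ref{prop:SchottkyAndFreeGroup}), but false in general for $\Hom(\pi_{1},G)$: for a surface group of genus $g\geq2$ the representation variety is singular at suitable reducible representations, and the paper nowhere claims global smoothness. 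Your slice argument needs $V$ to be smooth \emph{at} $\rho$ --- otherwise Luna's theorem still produces a slice $S$ with $S\quot Z=S$ étale onto a neighbourhood of $[\rho]$, but $S$ need not be smooth and no manifold conclusion follows for $\mathbb{B}^{\mathsf{gd}}$. Smoothness of $\Hom(\pi_{1},G)$ at a \emph{good} $\rho$ is true, but it is exactly the nontrivial input encapsulated in the paper's citation of \cite[Corollary 50]{Sikora} (going back to Goldman). One way to supply it: at good $\rho$ the Zariski tangent space is $Z^{1}\left(\pi_{1},\mathfrak{g}_{\Ad_{\rho}}\right)$, of dimension $(2g-1)\dim G+\dim Z$ by \cite[Lemma 6.2]{Martin1} (as recalled in Section \ref{sec:Schottky-moduli-map}); on the other hand, the relation map $G^{2g}\to G$, $(A_{i},B_{i})\mapsto\prod_{i}A_{i}B_{i}A_{i}^{-1}B_{i}^{-1}$, takes values in the derived group $DG$, so every component of its fibre through $\rho$ has dimension at least $2g\dim G-\dim DG=(2g-1)\dim G+\dim Z$. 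Equality of the two numbers forces $\rho$ to be a smooth point, and with this lemma in hand your Luna argument for $\mathbb{B}^{\mathsf{gd}}$ (and, trivially, for $\mathbb{S}^{\mathsf{gd}}$) is correct, since the stabilizer $Z=Z(\rho)$ indeed acts trivially by conjugation.

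By contrast, the step you identified as the main obstacle is comparatively harmless. Since $Z$ acts trivially, every linear complement to $T_{\rho}(G\cdot\rho)=B^{1}\left(\pi_{1},\mathfrak{g}_{\Ad_{\rho}}\right)$ is automatically $Z(\rho)$-invariant, and Luna's theorem admits the standard refinement producing a slice adapted to a $G$-stable smooth closed subvariety; alternatively, you can avoid compatible slices altogether: on the good locus $G/Z$ acts freely with closed orbits, so $\Hom^{\mathsf{gd}}\left(\pi_{1},G\right)\to\mathbb{B}^{\mathsf{gd}}$ and $\mathcal{S}^{\mathsf{gd}}\to\mathbb{S}^{\mathsf{gd}}$ are geometric quotients, $\mathcal{S}^{\mathsf{gd}}=\mathcal{S}\cap\Hom^{\mathsf{gd}}\left(\pi_{1},G\right)$ is closed and $G$-saturated, and the induced map $\mathbb{S}^{\mathsf{gd}}\to\mathbb{B}^{\mathsf{gd}}$ is an injective immersion with closed image (injectivity on tangent spaces being the inclusion \eqref{eq:goodtangentspaces}), hence a closed embedding of complex manifolds. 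So your overall architecture is sound and more self-contained than the paper's two citations to \cite{Sikora}, but as written it silently assumes the one fact that makes the theorem nontrivial for the surface group $\pi_{1}$.
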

\begin{proof}
In Proposition \ref{prop: existence of good and unitary Schottky rep }
we constructed a good Schottky representation, for $g\geq2$. By \cite[Proposition 33]{Sikora},
the subspaces of good representations in $\hom(\pi_{1},G)$ and $\mathcal{S}$
are Zariski open. Thus, \textbf{$\Hom^{\mathsf{gd}}\left(\pi_{1},\,G\right)$}
and $\mathcal{S}^{\mathsf{gd}}$ are open. Since we are considering
either surface groups or free groups, \cite[Corollary 50]{Sikora}
shows that if $\rho\in\Hom^{\mathsf{gd}}\left(\pi_{1},\,G\right)$,
respectively $\rho\in$$\mathcal{S}^{\mathsf{gd}}$, then its class
$[\rho]$ is a smooth point of $\mathbb{B}$, respectively $\mathbb{S}$. 
\end{proof}

\section{Higgs bundles and Schottky representations\label{sec:Higgs-bundles,-Schottky,Branes}}

In this section, we relate Schottky representations to certain Lagrangian
subspaces of the moduli space of Higgs $G$-bundles. It is a fundamental
result in the theory of Higgs bundles, the so-called non-abelian Hodge
theorem, that by considering the Hitchin equations for $G$-Higgs
fields, one obtains a homeomorphism between the Betti space $\mathbb{B}=\hom(\pi_{1},G)\quot G$
and the moduli space of semistable $G$-Higgs bundles over $X$, denoted
by $\mathcal{H}$.

It is a recent observation in \cite{BaragliaSchaposnick} that, when
considering $G$-Higgs bundles over Riemann surfaces with a real structure,
one is naturally lead to representations into $G$ of the fundamental
group of a 3-manifold with boundary $X$. These are naturally related
to Schottky representations, as we present below. Our approach via
Schottky representations has one advantage: by showing the vanishing
of the complex symplectic form on the strict Schottky locus (see Proposition
\ref{prop:Schottky-lagrangian}), we get a simple argument for the
fact that (at least a natural component of) the Baraglia-Schaposnik
brane is indeed non-empty and Lagrangian with respect to the natural
complex structure of $\mathbb{B}$ (coming from the complex structure
of $G$).

\subsection{Schottky representations and flat connections on a three manifold.}

Suppose that our Riemann surface $X$, of genus $g$, is the boundary
$\partial M$, of a compact 3-manifold $M$. Choose a basepoint in
this boundary, $x_{0}\in X\subset M$. From the inclusion of pointed
spaces $(X,x_{0})\hookrightarrow(M,x_{0})$ one gets an induced homomorphism:
\begin{equation}
\varphi\,:\,\pi_{1}=\pi_{1}(X,x_{0})\to\pi_{1}(M,x_{0}),\label{eq:phi}
\end{equation}
between their fundamental groups.

One particularly interesting case is when $X$ bounds a 3-dimensional
handlebody $M$, so that $\pi_{1}(M,x_{0})$ is free of rank $g$.
In this case, by carefully choosing the generators of each fundamental
group, we can arrange so that $\varphi$ coincides with the map defining
Schottky representations (see Lemma \ref{lem:alternative-def}). 
\begin{prop}
\label{prop:Schottky-flat}Let $M$ be a compact 3-dimensional handlebody
of genus $g$ whose boundary is a compact surface $X$. Then, the
moduli space $\mathbb{S}_{s}$ of strict Schottky representations
with respect to $\varphi$ coincides with the moduli space $\mathbb{F}_{M}(G)$
of flat $G$-connections over $M$. \end{prop}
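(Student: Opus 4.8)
The plan is to identify both moduli spaces with the same GIT quotient by exhibiting a natural bijection at the level of representations that respects conjugation. First I would recall that $M$ is a handlebody of genus $g$, so $\pi_1(M,x_0)$ is a free group of rank $g$, and that by the careful choice of generators described just before the statement the induced map $\varphi:\pi_1(X,x_0)\to\pi_1(M,x_0)$ is exactly the epimorphism sending $\alpha_i\mapsto e$ and $\beta_i\mapsto\gamma_i$, with $\ker\varphi$ the normal subgroup generated by the $\alpha_i$. Thus $\pi_1(M)\cong F_g$, and I may use this identification throughout.

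Next I would describe the moduli space $\mathbb{F}_M(G)$ of flat $G$-connections over $M$. Since $M$ is a manifold with the homotopy type of a wedge of $g$ circles, the Riemann--Hilbert/holonomy correspondence identifies gauge-equivalence classes of flat $G$-connections over $M$ with conjugacy classes of representations of $\pi_1(M)\cong F_g$ into $G$; in the algebraic category this is the character variety $\hom(F_g,G)\quot G$. The key observation is then purely group-theoretic: a representation $\sigma:\pi_1(M)\to G$ pulls back along $\varphi$ to a representation $\rho:=\sigma\circ\varphi:\pi_1(X)\to G$, and because $\alpha_i\in\ker\varphi$ we get $\rho(\alpha_i)=e$ for all $i$, so $\rho$ is a \emph{strict} Schottky representation by Lemma \ref{lem:alternative-def}(2). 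Conversely, by the same lemma, every strict Schottky representation $\rho$ satisfies $\rho(\ker\varphi)=\{e\}$, hence factors uniquely through $\varphi$ to yield a representation $\sigma$ of $\pi_1(M)\cong F_g$. This sets up a bijection $\sigma\leftrightarrow\rho=\sigma\circ\varphi$ between $\hom(\pi_1(M),G)$ and $\mathcal{S}_s$.

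Finally I would check compatibility with the group actions to descend the bijection to the quotients. The bijection $\sigma\mapsto\sigma\circ\varphi$ is manifestly $G$-equivariant for the conjugation actions, since $(g\cdot\sigma)\circ\varphi=g\cdot(\sigma\circ\varphi)$; indeed, under the identification $\mathcal{S}_s\cong\hom(F_g,G)$ used in Proposition \ref{prop:SchottkyAndFreeGroup}, the map is literally the identity. Therefore it induces an isomorphism of GIT quotients
\[
\mathbb{F}_M(G)\;\cong\;\hom(\pi_1(M),G)\quot G\;\cong\;\hom(F_g,G)\quot G\;\cong\;\mathcal{S}_s\quot G\;=\;\mathbb{S}_s,
\]
which is the claimed identification. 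The only point requiring genuine care is the identification of $\mathbb{F}_M(G)$ with the character variety $\hom(\pi_1(M),G)\quot G$ rather than with the naive set of conjugacy classes: one must make sure the notion of "moduli space of flat connections" being used carries the same quotient structure (taking the categorical/GIT quotient so that non-closed orbits are identified appropriately), so that the two spaces agree as varieties and not merely as sets. I expect this compatibility of quotient conventions, together with verifying that the holonomy correspondence is an isomorphism of the relevant $G$-spaces, to be the main obstacle; the group-theoretic factorization through $\varphi$ is then immediate from Lemma \ref{lem:alternative-def}.
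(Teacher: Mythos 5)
Your proof is correct and follows essentially the same route as the paper's: identify $\pi_1(M)\cong F_g$, use the holonomy correspondence to describe $\mathbb{F}_M(G)$ as conjugacy classes of representations of $\pi_1(M)$, and apply Lemma \ref{lem:alternative-def} to see that strict Schottky representations are exactly those factoring through $\varphi$, the whole correspondence being $G$-equivariant. Your closing remark about matching the GIT quotient convention on $\mathbb{F}_M(G)$ is a point the paper passes over silently ("well defined up to conjugation"), so flagging it is a welcome extra precaution rather than a deviation.
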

\begin{proof}
By hypothesis $\pi_{1}(M,x_{0})$ is a free group of rank $g$, and
$\pi_{1}$ has a ``symplectic presentation'' in terms of generators
$\alpha_{i}$ and $\beta_{i}$, $i=1,\cdots,g$, as in Equation \eqref{eq:fund-group},
so that 
\[
\varphi(\alpha_{i})=1,\quad\quad\varphi(\beta_{i})=\gamma_{i},\quad\quad i=1,\cdots,g,
\]
where $\gamma_{1},\cdots,\gamma_{g}$ form a free basis of $\pi_{1}(M,x_{0})$.
Thus, a strict Schottky representation $\rho:\pi_{1}\to G$ with respect
to $\varphi$ factors through a representation of $\pi_{1}(M,x_{0})\cong F_{g}$
via $\varphi$. By standard differential geometry arguments, this
is precisely the same as saying that the corresponding flat connection
$\nabla_{\rho}$ on $X$ extends, as a flat connection, to the 3-manifold
$M$. Conversely, a flat $G$-connection on $M$ induces a representation
$\rho:\pi_{1}\to G$ satisfying $\rho(\ker\varphi)=\{e\}$, and thus
it is a strict Schottky representation of $\pi_{1}$ (with respect
to $\varphi$), by Lemma \ref{lem:alternative-def}. This correspondence
is well defined up to conjugation by $G$, and so, we have a natural
identification: 
\[
\mathbb{S}_{s}=\Hom(F_{g},G)\quot G\cong\mathbb{F}_{M}(G),
\]
as wanted. 
\end{proof}

\subsection{Schottky representations and $(A,B,A)$-branes}

Suppose now that we have an anti-holomorphic involution $f:X\to X$,
defining a real structure on $X$. This induces, as in \cite[§3]{BaragliaSchaposnick},
an anti-holomorphic involution 
\begin{equation}
f^{*}:\mathcal{H}\to\mathcal{H},\label{eq:anti-hol-inv}
\end{equation}
where $\mathcal{H}$ is the moduli space of $G$-Higgs bundles over
$X$. Following \cite[§3]{BaragliaSchaposnick}, denote the set of
fixed points of $f^{*}$ in $\mathbf{\mathcal{H}}$ by $\mathcal{L}_{G}$,
and call it the \emph{Baraglia-Schaposnik brane} inside $\mathcal{H}$.

Consider the 3-manifold with boundary $\hat{X}:=X\times[-1,1]$. The
anti-holomorphic involution $f:X\to X$ defines now an orientation
preserving involution $\sigma:\hat{X}\to\hat{X}$ given by 
\[
\sigma(x,t)=(f(x),-t).
\]
Note that the boundary of $\hat{X}$ consists of two copies of $X$,
but the boundary of the compact 3-manifold $M:=\hat{X}/\sigma$ is
homeomorphic to $X$.
\begin{prop}
\label{prop:Schottky-brane}Let $f:X\to X$ be an anti-holomorphic
involution such that $M$ is a handlebody of genus $g$, and let $x_{0}\in X\subset M$
be fixed by $f$. Then, the moduli space $\mathbb{S}_{s}$ of strict
Schottky representations with respect to the map $\varphi$ in \eqref{eq:phi}
is included in the Baraglia-Schaposnik brane $\mathcal{L}_{G}$. \end{prop}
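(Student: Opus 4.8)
The plan is to reduce everything to the description of $\mathbb{S}_s$ as the space of flat connections on the handlebody $M$ obtained in Proposition \ref{prop:Schottky-flat}, and then to recognise the reflection symmetry of $M=\hat X/\sigma$ as exactly the symmetry whose fixed locus is the brane $\mathcal{L}_G$.

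First I would invoke Proposition \ref{prop:Schottky-flat}: since $M$ is a handlebody with $\pi_1(M)\cong F_g$ and $\varphi$ is the map in \eqref{eq:phi}, a class $[\rho]\in\mathbb{S}_s$ is exactly the boundary holonomy of a flat $G$-connection on $M$, i.e. $\rho$ factors through $\varphi:\pi_1\to\pi_1(M)$ (Lemma \ref{lem:alternative-def}). Hence it is enough to show that the boundary-restriction map $\mathbb{F}_M(G)\to\mathbb{B}=\hom(\pi_1,G)\quot G$, composed with the non-abelian Hodge homeomorphism $\mathbb{B}\cong\mathcal{H}$, has image contained in the fixed-point set $\mathcal{L}_G$ of $f^*$.

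Next I would make the symmetry explicit. Let $q:\hat X=X\times[-1,1]\to M$ be the quotient by $\sigma$; since $q\circ\sigma=q$, the pullback $q^*A$ of a flat connection $A$ on $M$ is a $\sigma$-invariant flat connection on $\hat X$. Because $[-1,1]$ is contractible and $q^*A$ is flat, parallel transport in the interval direction identifies, up to gauge, the restrictions of $q^*A$ to the two boundary slices $X\times\{1\}$ and $X\times\{-1\}$; call $\rho$ the holonomy of the first. On the other hand $\sigma$ carries $X\times\{1\}$ onto $X\times\{-1\}$ by $(x,1)\mapsto(f(x),-1)$, so $\sigma$-invariance of $q^*A$ forces the holonomy of $X\times\{-1\}$ to be conjugate to the precomposition $\rho\circ f_*$ (well defined because $f(x_0)=x_0$). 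Combining the two identifications yields $[\rho]=[\rho\circ f_*]$ in $\mathbb{B}$, which is precisely the fixed-point condition for the topological involution induced by $f$.

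The main obstacle is the final transport to $\mathcal{H}$: one must check that the topological involution $[\rho]\mapsto[\rho\circ f_*]$ on $\mathbb{B}$ corresponds, under non-abelian Hodge, to the analytic involution $f^*$ on $\mathcal{H}$ defined through the Hitchin equations in \cite[§3]{BaragliaSchaposnick}. Since $f$ is anti-holomorphic, the Dolbeault-to-Betti dictionary introduces a complex conjugation (a real structure on $G$), and one has to verify that the fixed-point condition cutting out $\mathcal{L}_G$ is exactly ``extends as a flat connection over $M=\hat X/\sigma$''. This compatibility is the content of \cite[Proposition 43]{BaragliaSchaposnick}, which I would quote to finish; the new input is the identification, via Proposition \ref{prop:Schottky-flat}, of the resulting boundary representations with the strict Schottky locus $\mathbb{S}_s$, giving $\mathbb{S}_s\subset\mathcal{L}_G$.
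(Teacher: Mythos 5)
Your proof is correct and takes essentially the same route as the paper: both arguments reduce to the identification $\mathbb{S}_{s}\cong\mathbb{F}_{M}(G)$ from Proposition \ref{prop:Schottky-flat} and then invoke \cite[Proposition 43]{BaragliaSchaposnick} for the key fact that flat $G$-connections on $M$ correspond, under non-abelian Hodge, to Higgs bundles fixed by $f^{*}$, i.e.\ to points of $\mathcal{L}_{G}$. Your intermediate discussion of the $\sigma$-invariant pullback to $\hat{X}$ and the Betti-level condition $[\rho]=[\rho\circ f_{*}]$ is a correct unpacking of what that cited proposition supplies, but the load-bearing steps coincide with the paper's proof.
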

\begin{proof}
In \cite[Prop. 43]{BaragliaSchaposnick}, Baraglia and Schaposnik
show that any flat $G$-connection on $M$ defines, under the non-abelian
Hodge theorem sending $\mathbb{B}$ to $\mathcal{H}$, a $G$-Higgs
bundle which is fixed by the involution $f^{*}$. Thus, they have
produced a map, which they prove to be an inclusion: 
\[
\mathbb{F}_{M}(G)\to\mathcal{L}_{G}\subset\mathcal{H}.
\]
Since, by Proposition \ref{prop:Schottky-flat}, $\mathbb{S}_{s}$
can be identified with $\mathbb{F}_{M}(G)$ the proposition follows. 
\end{proof}
\begin{figure}
\begin{minipage}[b]{1\columnwidth}%
\begin{center}
\includegraphics[scale=0.22]{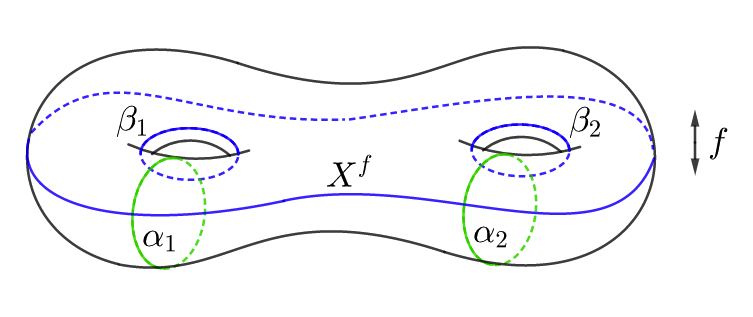}$\quad\quad\qquad$ \includegraphics[scale=0.12]{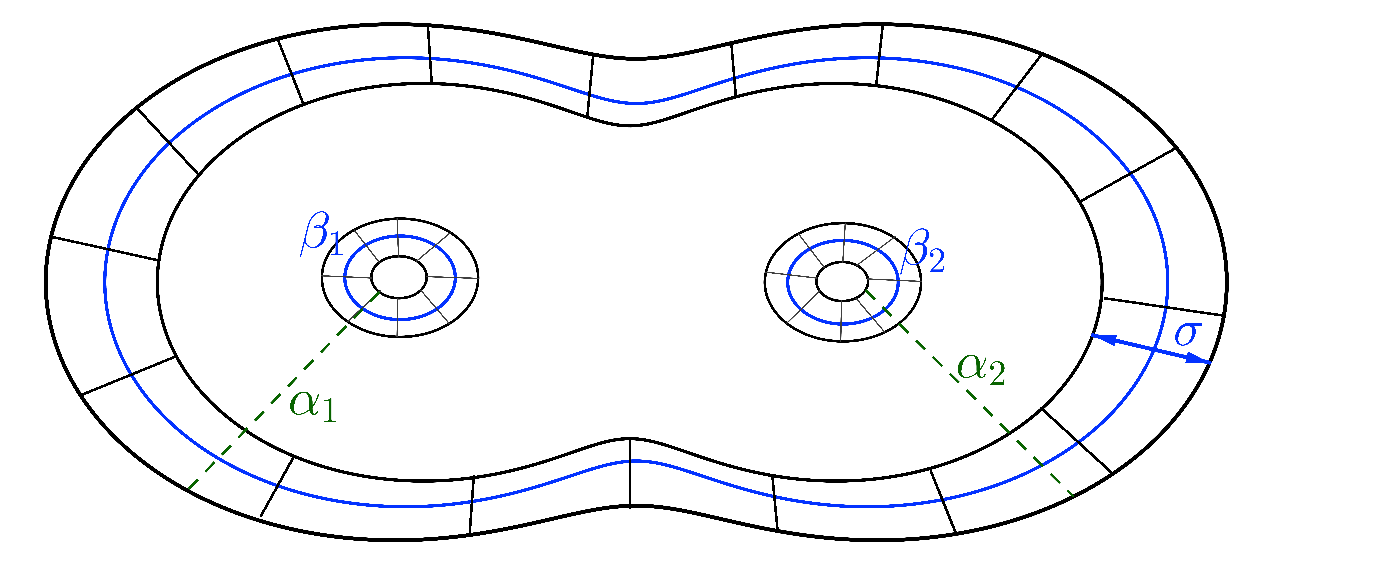}\\
 {\small{}(A) Involution $f$ on $X$, and its fixed curves $X^{f}$$\quad\quad\quad$
(B) Involution $\sigma$ restricted to $X^{f}\times I$$\quad$\\[30mm]} 
\par\end{center}%
\end{minipage}

\caption{\label{fig:Involucoes}Involutions}
\end{figure}
\vspace{-1mm}

\begin{rem}
\label{rem:g+1-loops}The assumption of the previous proposition is
verified when the anti-holomor\-phic involution $f$ has as fixed
point locus, $X^{f},$ the union of $g+1$ disjoint loops and disconnected
orientation double cover (see \cite[Proposition 3]{BaragliaSchaposnick}
and Figure \ref{fig:Involucoes}). In this case, \cite[Proposition 10]{BaragliaSchaposnick}
says that the set of smooth points of $\mathcal{L}_{G}$ is a non-empty
Lagrangian submanifold of $\mathbf{\mathcal{H}}$. In a future work,
we plan to further address this construction. 
\end{rem}

\section{Schottky $G$-bundles \label{sec:Schottky-G-bundles}}

Let again $X$ be a compact Riemann surface, with fundamental group
$\pi_{1}$ and $\rho:\pi_{1}\to G$ be a representation into a reductive
group. The associated bundle construction, from a universal cover
$p:Y\to X$, defines a $G$-bundle over $X$ associated to $\rho$.
We write this $G$-bundle as $E_{\rho}:=(Y\times G)/_{\!\!\rho}\,\pi_{1}$,
with equivalence classes given by\footnote{We are using a left action both on $Y$ and on $G$; this was chosen
(other options would be equivalent) for a standard use of Fox calculus
in section \ref{sec:Surjectivity-of-the}.}
\begin{equation}
(y,~g)\sim\left(\gamma\cdot y,~\rho(\gamma)\cdot g\right),~~~~~~\forall\gamma\in\pi_{1},\,(y,g)\in Y\times G.\label{eq:E_rho}
\end{equation}

\subsection{Schottky principal bundles and the uniformization map }

Thus, the space of representations parametrizes holomorphic $G$-bundles,
and we can view this construction as providing a natural map, that
we call the \emph{uniformization map}: 
\begin{equation}
\begin{array}{cccc}
\mathbf{E}: & \mathbb{B} & \to & M_{G}\\
 & [\rho] & \mapsto & [E_{\rho}]
\end{array}\label{eq:uniformization-map}
\end{equation}
Here, $M_{G}$ represents the set of isomorphism classes of $G$-bundles
that admit a holomorphic flat connection. To simplify terminology,
we say that a bundle \emph{is flat} if it admits a holomorphic flat
connection. Note that $\mathbf{E}$ is well defined on conjugacy classes,
since if $\rho$ and $\sigma$ are conjugate representations, then
$E_{\rho}\cong E_{\sigma}$. Moreover, by considering the holonomy
representation of a given flat $G$-bundle, the map $\mathbf{E}$
is easily seen to be surjective.%

\begin{defn}
A $G$-bundle $E$ over the Riemann surface $X$ is called: \end{defn}
\begin{enumerate}
\item a\textbf{ }\emph{Schottky $G$-bundle} if $E$ is isomorphic to $E_{\rho}$
for some Schottky representation $\rho:\pi_{1}\rightarrow G$, that
is, $\rho\left(\alpha_{i}\right)\in Z$ for all $i=1,\cdots,g$. 
\item a\textbf{ }\emph{strict Schottky $G$-bundle} if $E$ is isomorphic
to $E_{\rho}$ for some strict Schottky representation $\rho:\pi_{1}\rightarrow G$,
that is, $\rho\left(\alpha_{i}\right)=e$ for all $i=1,\cdots,g$. \end{enumerate}
\begin{rem}
(1) \label{rem: SchVecBd}\textbf{ }\emph{Schottky vector bundles}
were defined by \cite{Florentino} as vector bundles isomorphic to
$V_{\rho}:=(Y\times\mathbb{C}^{n})/_{\!\!\rho}\,\pi_{1}$ for a representation
$\rho:\pi_{1}\to GL_{n}\mathbb{C}$ with $\rho\left(\alpha_{i}\right)=e$
for all $i=1,\cdots,g$. Then, the associated \emph{frame bundle}
is, by definition the $GL_{n}\mathbb{C}$-bundle defined by the same
representation: $E_{\rho}=(Y\times GL_{n}\mathbb{C})/_{\!\!\rho}\,\pi_{1}$.
So, if $V$ is a Schottky vector bundle then the associated frame
bundle is a \emph{strict} Schottky $GL_{n}\mathbb{C}$-bundle. In
other words, according to our definition, Schottky vector bundles
are the same as strict Schottky (principal) $GL_{n}\mathbb{C}$-bundles.
See, however, Proposition \ref{connected-center} and Example \ref{exa:SL_n}.\\
 (2) In terms of the uniformization map in Equation \eqref{eq:uniformization-map}
we simply say that $E$ is Schottky (resp. strict Schottky) if and
only if $\mathbf{E}^{-1}([E])\subset\mathbb{S}$ (resp. $\mathbf{E}^{-1}([E])\subset\mathbb{S}_{s}$). 
\end{rem}

\subsection{Associated Schottky bundles}

In the following, we describe how the Schottky property is transferred
to associated bundles. Throughout this section, $G$ and $H$ denote
connected reductive algebraic groups, $Z_{G}$ and $Z_{H}$ the corresponding
centers.

Suppose we have a $G$-bundle $E$ over $X$. Then, the $H$-bundle
over $X$, obtained from the trivial bundle $E\times H\to E$ by letting
$G$ act on $H$ through a homomorphism $\phi:G\to H$ is denoted
by $E(H):=(E\times H)/_{\!\!\phi}\,G$, and we say that $E(H)$ is
obtained from $E$ by \emph{extension of structure group}. This is,
conceptually, the same as the construction of the bundle $E_{\rho}$
starting from universal cover of $X$, the $\pi_{1}$ bundle $Y\to X$,
and the homomorphism $\rho:\pi_{1}\to G$, as in \eqref{eq:E_rho}. 
\begin{prop}
\label{Schottky and group extension} Let $\phi:G\to H$ be a group
homomorphism and $E$ be a Schottky $G$-bundle. Then:
\begin{enumerate}
\item If $E$ is a \emph{strict} Schottky $G$-bundle, then $E(H)$ is a
strict Schottky $H$-bundle. 
\item If $\phi(Z_{G})\subset Z_{H}$, then $E(H)$ is a Schottky $H$-bundle. 
\end{enumerate}
\end{prop}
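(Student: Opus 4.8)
The plan is to work directly with the representation-theoretic description of Schottky bundles and trace how the extension of structure group interacts with the defining condition on the generators $\alpha_i$. The key observation, which I would make first, is that extension of structure group by a homomorphism $\phi:G\to H$ commutes with the associated-bundle construction from representations: if $E\cong E_\rho=(Y\times G)/_{\!\!\rho}\,\pi_1$ for some Schottky representation $\rho:\pi_1\to G$, then the extended bundle satisfies $E(H)\cong E_{\phi\circ\rho}=(Y\times H)/_{\!\!(\phi\circ\rho)}\,\pi_1$. This is the "conceptual sameness" already flagged in the paragraph preceding the proposition, so I would state it as a short lemma or simply invoke it: both $E(H)$ and $E_{\phi\circ\rho}$ are the $H$-bundle obtained by letting $\pi_1$ act on $H$ via $\phi\circ\rho$, so they are isomorphic as holomorphic $H$-bundles.

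Once this identification is in place, both parts reduce to checking that $\phi\circ\rho:\pi_1\to H$ has the relevant Schottky property. For part (1), if $\rho$ is a strict Schottky representation, then $\rho(\alpha_i)=e_G$ for all $i$, hence $(\phi\circ\rho)(\alpha_i)=\phi(e_G)=e_H$ for all $i$, so $\phi\circ\rho$ is a strict Schottky representation of $\pi_1$ into $H$. Therefore $E(H)\cong E_{\phi\circ\rho}$ is a strict Schottky $H$-bundle by definition. For part (2), if $\rho$ is merely Schottky, then $\rho(\alpha_i)\in Z_G$ for all $i$; the extra hypothesis $\phi(Z_G)\subset Z_H$ gives $(\phi\circ\rho)(\alpha_i)=\phi(\rho(\alpha_i))\in\phi(Z_G)\subset Z_H$, so $\phi\circ\rho$ is a Schottky representation into $H$, and again $E(H)\cong E_{\phi\circ\rho}$ is a Schottky $H$-bundle.

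I do not expect any serious obstacle here; the proof is essentially a definitional unwinding once the compatibility of extension of structure group with the associated-bundle construction is recorded. The only point requiring a small amount of care, and the place I would spend most attention, is justifying the isomorphism $E(H)\cong E_{\phi\circ\rho}$ cleanly. Concretely, one can exhibit it by the map sending the class of $(y,h)$ in $E_{\phi\circ\rho}$ to the class of $([y,e_G],h)$ in $E(H)=(E_\rho\times H)/_{\!\!\phi}\,G$, and verifying that the two equivalence relations match up via the cocycle $\phi\circ\rho$; this is a routine but worthwhile check, since it is exactly where the hypothesis structure (no hypothesis in part (1), the center condition in part (2)) gets used only through the values $(\phi\circ\rho)(\alpha_i)$. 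Part (1) needs no hypothesis on $\phi$ beyond being a homomorphism precisely because $\phi(e_G)=e_H$ automatically, whereas part (2) genuinely needs $\phi(Z_G)\subset Z_H$ because a homomorphism need not carry the center of $G$ into the center of $H$ in general.
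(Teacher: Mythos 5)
Your proposal is correct and follows essentially the same route as the paper: both rest on the identification $E(H)\cong E_{\phi\circ\rho}$ (which the paper simply asserts, while you sketch the explicit isomorphism) and then verify the (strict) Schottky condition for $\phi\circ\rho$, the only cosmetic difference being that the paper phrases the check via $\rho(\ker\varphi)$ from Lemma \ref{lem:alternative-def} whereas you check the generators $\alpha_{i}$ directly, which is the same thing by definition.
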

\begin{proof}
First note that if $E=E_{\rho}$, for some $\rho:\pi_{1}\to G$, then
$E(H)=E_{\phi\circ\rho}$. Then, assuming $\rho$ is a strict Schottky
representation, $\rho(\ker\varphi)$ is the identity of $G$ (as in
Lemma \ref{lem:alternative-def}). This implies that $(\phi\circ\rho)(\ker\varphi)=\phi(e)=e_{H}$,
the identity of $H$, so $E_{\phi\circ\rho}$ is a strict Schottky
bundle, as wanted. The second case is similar, using the hypothesis
$\phi(Z_{G})\subset Z_{H}$. 
\end{proof}
A $G\times H$-bundle $E$ can be seen as an ordered pair $(E_{G},E_{H})$,
with $E_{G}$ and $E_{H}$ a $G$-bundle and a $H$ bundle respectively.
Indeed, from $E$ we can define $E_{G}:=E(G)$ and $E_{H}:=E(H)$,
where there are considered the projections $\pi_{G}:G\times H\to G$
and $\pi_{H}:G\times H\to H$, respectively. So, the following proposition
is an easy consequence of the previous one. 
\begin{prop}
\label{prop:product-bundles}A $(G\times H)$-bundle $E$ is (strict)
Schottky if and only if the $E_{G}$ and $E_{H}$ are (strict) Schottky
principal bundles. \end{prop}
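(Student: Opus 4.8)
The plan is to reduce the claim about the product bundle $E$ to the two structure-group extension statements already established in Proposition \ref{Schottky and group extension}, applied to the two projection homomorphisms $\pi_G:G\times H\to G$ and $\pi_H:G\times H\to H$. The key observation, which I would record first, is that the center of a product of connected reductive groups splits as a product of centers, $Z_{G\times H}=Z_G\times Z_H$, and that under this identification the projections satisfy $\pi_G(Z_{G\times H})\subset Z_G$ and $\pi_H(Z_{G\times H})\subset Z_H$. This is exactly the hypothesis needed to invoke part (2) of Proposition \ref{Schottky and group extension} for each projection, and it is what makes the ``Schottky'' (as opposed to strict) case go through cleanly.

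For the forward direction I would argue as follows. Suppose $E$ is a (strict) Schottky $(G\times H)$-bundle, so $E\cong E_\rho$ for a (strict) Schottky representation $\rho:\pi_1\to G\times H$. Since $E_G=E(G)=E_{\pi_G\circ\rho}$ and $E_H=E(H)=E_{\pi_H\circ\rho}$, Proposition \ref{Schottky and group extension} immediately gives that $E_G$ and $E_H$ are (strict) Schottky: in the strict case this is part (1) (no center hypothesis is needed), and in the general Schottky case it is part (2), using the center-splitting observation above to verify $\pi_G(Z_{G\times H})\subset Z_G$ and likewise for $H$.

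For the converse direction — recovering the Schottky property of $E$ from that of $E_G$ and $E_H$ — I would write $\rho=(\rho^G,\rho^H):\pi_1\to G\times H$, where $\rho^G=\pi_G\circ\rho$ and $\rho^H=\pi_H\circ\rho$ are the holonomy representations of $E_G$ and $E_H$. The point is that $\rho$ is (strict) Schottky precisely when \emph{both} components are: by Lemma \ref{lem:alternative-def}, $\rho$ is strict Schottky iff $\rho(\ker\varphi)=\{e\}$, which holds iff $\rho^G(\ker\varphi)=\{e\}$ and $\rho^H(\ker\varphi)=\{e\}$ simultaneously; and $\rho$ is Schottky iff $\rho(\ker\varphi)\subset Z_G\times Z_H$, which by the center splitting holds iff $\rho^G(\ker\varphi)\subset Z_G$ and $\rho^H(\ker\varphi)\subset Z_H$. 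Thus from $E_G,E_H$ Schottky one produces component representations with the correct image on $\ker\varphi$, assembles them into a single $\rho$, and concludes $E\cong E_\rho$ is Schottky.

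The one genuine subtlety — and the step I expect to require the most care — is the converse, because a priori the isomorphisms $E_G\cong E_{\sigma}$ and $E_H\cong E_{\tau}$ witnessing the Schottky property of the factors are given by \emph{independent} representations $\sigma:\pi_1\to G$ and $\tau:\pi_1\to H$, and one must check that they can be taken to be the two components of a single representation of $\pi_1$ into $G\times H$ whose associated bundle is $E$. This is where the holonomy description is essential: since all bundles in sight are flat, $E$ itself has a well-defined holonomy $\rho=(\rho^G,\rho^H)$, and the factors' Schottky data simply tell us that each $\rho^G,\rho^H$ has the required image behaviour on $\ker\varphi$ — no independent reassembly is actually needed, because the two components already come from the single holonomy of $E$. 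Making this identification precise, rather than juggling three unrelated representations, is the crux; once it is set up, both directions follow formally from Lemma \ref{lem:alternative-def} and Proposition \ref{Schottky and group extension}.
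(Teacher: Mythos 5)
Your forward direction is correct and is essentially the paper's own argument: split the center as $Z_{G\times H}=Z_{G}\times Z_{H}$, note $E_{G}=E_{\pi_{G}\circ\rho}$, and apply Proposition \ref{Schottky and group extension} to the two projections. The genuine gap is in your converse, at the step where you claim that ``the factors' Schottky data simply tell us that each $\rho^{G},\rho^{H}$ has the required image behaviour on $\ker\varphi$,'' with $\rho=(\rho^{G},\rho^{H})$ ``the'' holonomy of $E$. Two things go wrong. First, $E$ has no well-defined holonomy: a flat bundle carries many flat connections with mutually non-conjugate holonomies, and the fibre $\mathbf{E}^{-1}([E])$ is typically infinite (this is stressed in the paper, e.g.\ Remark \ref{rem:nosemistable}); moreover, in the converse direction even the flatness of $E$ must first be \emph{derived} from that of $E_{G}$ and $E_{H}$, which already requires the product identification you are trying to avoid. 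Second, and fatally, ``$E_{G}$ is a Schottky bundle'' is an existence statement --- $E_{G}\cong E_{\sigma}$ for \emph{some} Schottky $\sigma$ --- and does not imply that the $G$-component $\rho^{G}$ of whatever holonomy you fixed on $E$ satisfies $\rho^{G}(\ker\varphi)\subset Z_{G}$; it only implies that $\rho^{G}$ is \emph{analytically equivalent} to a Schottky representation. Already for $G=H=\mathbb{C}^{*}$ and strictness: a degree-zero line bundle equals $E_{\rho}$ for many $\rho$ with $\rho(\alpha_{i})\neq e$ (the image of the orbit map $Q_{\rho}$, cf.\ Lemma \ref{lem:ImQrho equal fibre of E}), so the components of a randomly chosen flat structure on $E$ need not be strict Schottky representations. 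Thus the ``resolution'' you offer for the subtlety you correctly identified is circular: a holonomy of $E$ with Schottky components exists only \emph{after} the reassembly you declare unnecessary.

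The fix is short and is exactly what the paper's set-up (``a $(G\times H)$-bundle can be seen as the ordered pair $(E_{G},E_{H})$'') supplies: a principal $(G\times H)$-bundle is recovered from its factor bundles as $E\cong E_{G}\times_{X}E_{H}$, since the natural map $E\to E(G)\times_{X}E(H)$ is a morphism of $(G\times H)$-bundles over $X$, hence an isomorphism. Given Schottky representations $\sigma:\pi_{1}\to G$ and $\tau:\pi_{1}\to H$ with $E_{\sigma}\cong E_{G}$ and $E_{\tau}\cong E_{H}$, set $\rho:=(\sigma,\tau)$; it is (strict) Schottky by your Lemma \ref{lem:alternative-def} computation, and $E_{\rho}\cong E_{\sigma}\times_{X}E_{\tau}\cong E_{G}\times_{X}E_{H}\cong E$. (Alternatively, fix any holonomy $\rho=(\rho^{G},\rho^{H})$ of $E$ and use Theorem \ref{analytic equivalence G bundles} to produce $h_{1}:Y\to G$ and $h_{2}:Y\to H$ intertwining $\rho^{G}$ with $\sigma$ and $\rho^{H}$ with $\tau$; then $h=(h_{1},h_{2})$ shows $E_{\rho}\cong E_{(\sigma,\tau)}$.) Either way, the independent assembly of $\sigma$ and $\tau$ into a single representation is unavoidable, and with it your argument closes correctly.
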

\begin{proof}
Assume that $E=E_{\rho}$ for a certain Schottky representation $\rho=(\rho_{G},\rho_{H}):\pi_{1}\to G\times H$.
Then, both $\rho_{G}$ and $\rho_{H}$ are Schottky representations
because $(\rho_{G}(\alpha_{i}),\rho_{H}(\alpha_{i}))=\rho(\alpha_{i})\in Z_{G\times H}=Z_{G}\times Z_{H}$
for $i=1,\cdots,g$. By Proposition \ref{Schottky and group extension},
$E_{\pi_{G}\circ\rho}$ is Schottky. On the other hand, it is easy
to see that $E_{G}=E_{\rho_{G}}=E_{\pi_{G}\circ\rho}$, so $E_{G}$
is Schottky. The same argument applies to $E_{H}$. The converse statement
and the strict case are treated in a similar fashion. 
\end{proof}
Let now $\mathfrak{g}=\mathsf{Lie}(G)$ be the Lie algebra of $G$.
Given a $G$-bundle $E$, the $GL(\mathfrak{g})$-bundle associated
to the adjoint representation $\Ad:G\rightarrow GL(\mathfrak{g})$
corresponds, via the frame bundle construction, to the vector bundle
(with $\mathfrak{g}$ as fiber): 
\begin{equation}
\Ad(E):=E\times_{\Ad}\mathfrak{g}.\label{eq: adjoint bdl Ad(E_G)}
\end{equation}
called the\emph{ adjoint bundle}. %

\begin{prop}
\label{EToAdESchottky} If $E$ is a Schottky $G$-bundle then the
adjoint bundle $\Ad(E)$ is a Schottky vector bundle. \end{prop}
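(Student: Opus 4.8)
The plan is to exhibit $\Ad(E)$ explicitly as a vector bundle of the form $V_\sigma$ for a \emph{strict} Schottky representation $\sigma$, which by Remark \ref{rem: SchVecBd} is exactly the condition for being a Schottky vector bundle.

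First I would write $E \cong E_\rho$ for some Schottky representation $\rho:\pi_1 \to G$, so that $\rho(\alpha_i) \in Z_G$ for all $i=1,\cdots,g$. Applying to the homomorphism $\Ad : G \to GL(\mathfrak{g})$ the functoriality already recorded in the proof of Proposition \ref{Schottky and group extension} (namely, that $E=E_\rho$ implies $E(H)=E_{\phi\circ\rho}$), the extension of structure group satisfies $E(GL(\mathfrak{g})) = E_{\Ad\circ\rho}$. Passing to the associated vector bundle with fibre $\mathfrak{g}$ then identifies the adjoint bundle as $\Ad(E) = V_{\Ad\circ\rho}$.

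The key point, and the only place where anything must actually be checked, is the behaviour of $\Ad$ on the center. Since every $z\in Z_G$ commutes with all of $G$, conjugation by $z$ is the identity automorphism of $G$, whence $\Ad_z = \Id_{\mathfrak{g}}$; that is, $Z_G \subset \ker\Ad$. Consequently, for each generator $\alpha_i$ we get $(\Ad\circ\rho)(\alpha_i) = \Ad_{\rho(\alpha_i)} = \Id_{\mathfrak{g}}$, because $\rho(\alpha_i)\in Z_G$. Thus $\Ad\circ\rho$ satisfies the \emph{strict} Schottky condition of Definition \ref{Schottky rep general}(2), and $\Ad(E) = V_{\Ad\circ\rho}$ is therefore a Schottky vector bundle.

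I do not anticipate any genuine obstacle: the entire content reduces to the single observation that $\Ad$ collapses the center to the identity, which upgrades the mere Schottky hypothesis $\rho(\alpha_i)\in Z_G$ to the strict conclusion $(\Ad\circ\rho)(\alpha_i)=e$. It is worth noting, however, that invoking only Proposition \ref{Schottky and group extension}(2) (via the trivial inclusion $\Ad(Z_G)\subset Z_{GL(\mathfrak{g})}$) would yield merely that $\Ad(E)$ is Schottky; the sharper \emph{strict} statement, which is what ``Schottky vector bundle'' means by Remark \ref{rem: SchVecBd}, genuinely requires the triviality of $\Ad$ on $Z_G$ and not just that $\Ad(Z_G)$ lands in the center.
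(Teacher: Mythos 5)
Your proof is correct and follows essentially the same route as the paper's: both identify $\Ad(E)$ with the bundle associated to the composition $\Ad\circ\rho$ and use the fact that the center lies in $\ker(\Ad)$ to conclude $(\Ad\circ\rho)(\alpha_{i})=\Id_{\mathfrak{g}}$, so that $\Ad\circ\rho$ is a \emph{strict} Schottky representation into $GL(\mathfrak{g})$ and $\Ad(E)$ is a Schottky vector bundle in the sense of Remark \ref{rem: SchVecBd}. Your closing observation, that strictness requires $Z_{G}\subset\ker\Ad$ rather than merely $\Ad(Z_{G})\subset Z_{GL(\mathfrak{g})}$, accurately pinpoints the step the paper's proof relies on.
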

\begin{proof}
Since $E$ is a Schottky $G$-bundle, there is $\rho\in\hom(\pi_{1},G)$
with $\rho\left(\alpha_{i}\right)\in Z$ for all $i=1,\cdots,g$,
such that $E\cong E_{\rho}=(Y\times G)/_{\!\!\rho}\,\pi_{1}$. By
construction, the vector bundle associated to $E$ by the adjoint
representation can be seen as 
\[
\Ad(E)=E\times_{\Ad}\mathfrak{g}\cong(Y\times\mathfrak{g})/_{\!\!\Ad_{\rho}}\,\pi_{1}
\]
where $\Ad_{\rho}:\pi_{1}\rightarrow G\to GL(\mathfrak{g})$ is the
composition of the representations $\Ad$ and $\rho$. Because $\rho(\alpha_{i})\in Z$
and since $\ker\left(\Ad\right)=Z$, we see that $\Ad_{\rho}(\alpha_{i})$
is the identity map, for all $i=1,\cdots,\,g$. Thus, we obtain a
strict Schottky representation $\Ad_{\rho}:\pi_{1}\to GL(\mathfrak{g})$.
So, $\Ad(E)\cong Y\times_{\Ad\rho}\mathfrak{g}$ is a strict Schottky
$GL(\mathfrak{g})$-bundle. 
\end{proof}
The following simple example shows that the converse of Proposition
\ref{EToAdESchottky} is not valid. 
\begin{example*}
Consider the $\mathbb{C}{}^{*}$-bundle $E\to X$ defined as the frame
bundle of a line bundle $L$ with non-zero first Chern class. Then,
$\Ad\left(E\right)$ is the trivial line bundle, as conjugation is
trivial in this case, so that $\Ad\left(E\right)$ is trivially a
Schottky vector bundle. But $E$ is not Schottky, as it does not admit
a flat holomorphic connection (Weil's theorem \cite{Weil}). 
\end{example*}
However, by only requiring that $E$ admits a flat holomorphic connection,
we obtain a necessary and sufficient condition. 
\begin{prop}
\label{AdEToESchottky} Suppose that the $G$-bundle $E$ admits a
flat holomorphic connection. Then, $E$ is a Schottky $G$-bundle
if and only if $\Ad(E)$ is a Schottky vector bundle. \end{prop}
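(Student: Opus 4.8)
The plan is to prove the nontrivial direction of Proposition \ref{AdEToESchottky}: assuming $E$ admits a flat holomorphic connection and that $\Ad(E)$ is a Schottky vector bundle, deduce that $E$ itself is Schottky. The reverse implication is already Proposition \ref{EToAdESchottky}. Since $E$ is flat, write $E \cong E_\rho$ for some holonomy representation $\rho : \pi_1 \to G$ (such a $\rho$ exists by surjectivity of the uniformization map $\mathbf{E}$). The goal is then to replace $\rho$ by a \emph{conjugate} representation $\rho'$, giving an isomorphic bundle $E_{\rho'} \cong E_\rho$, which is genuinely Schottky, i.e.\ satisfies $\rho'(\alpha_i) \in Z$ for all $i$.

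The key structural input is the identification from the proof of Proposition \ref{EToAdESchottky}: the adjoint bundle is $\Ad(E_\rho) \cong (Y \times \mathfrak{g})/_{\Ad_\rho}\,\pi_1$, whose holonomy is $\Ad_\rho = \Ad \circ \rho : \pi_1 \to GL(\mathfrak{g})$. The hypothesis that $\Ad(E)$ is a Schottky vector bundle means $\Ad(E) \cong V_\sigma$ for some $\sigma : \pi_1 \to GL(\mathfrak{g})$ with $\sigma(\alpha_i) = e$. First I would use flatness to argue that the holonomy representation of a flat bundle is determined up to conjugation, so that $\Ad_\rho$ and $\sigma$ are conjugate in $GL(\mathfrak{g})$; in particular $\Ad_\rho(\alpha_i) = \Ad_{\rho(\alpha_i)}$ is conjugate to the identity, hence \emph{equal} to the identity. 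Since $\ker(\Ad) = Z$, this forces $\rho(\alpha_i) \in Z$ for every $i=1,\cdots,g$, which says precisely that $\rho$ is already a Schottky representation, and therefore $E = E_\rho$ is a Schottky $G$-bundle.

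The main obstacle is the conjugacy step: passing from ``$\Ad(E) \cong V_\sigma$ as holomorphic bundles'' to ``$\Ad_\rho$ and $\sigma$ are conjugate as representations.'' This is exactly the statement that a flat bundle determines its monodromy representation up to conjugation, i.e.\ the faithfulness of the Riemann--Hilbert correspondence $[\rho] \mapsto [E_\rho]$ on holonomy classes. One must be careful that $\Ad(E)$ carries the flat structure induced from the flat connection on $E$ (which exists by hypothesis), so that its monodromy is genuinely $\Ad_\rho$ and not merely some abstractly isomorphic local system; the isomorphism $\Ad(E) \cong V_\sigma$ should be taken as an isomorphism of \emph{flat} bundles, matching flat connections. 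Granting this, the remaining steps are formal: an element of $GL(\mathfrak{g})$ conjugate to the identity is the identity, and $\ker(\Ad) = Z$ closes the argument. I would therefore phrase the proof so as to make explicit that $\Ad(E_\rho)$ is flat with monodromy $\Ad_\rho$, invoke the equivalence between flat bundles and their monodromy up to conjugation to conclude $\Ad_\rho(\alpha_i) = \Id$, and finish via $\ker \Ad = Z$.
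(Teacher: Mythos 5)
Your skeleton is the same as the paper's proof: realize $E\cong E_{\rho}$ via a flat connection, note $\Ad(E_{\rho})\cong E_{\Ad_{\rho}}$, conclude $\Ad_{\rho}(\alpha_{i})=\Id$, and finish with $\ker(\Ad)=Z$. The genuine gap is the principle you invoke for the third step. It is \emph{not} true that a holomorphic bundle which merely \emph{admits} a flat connection determines its holonomy up to conjugation; that faithfulness holds for bundles equipped with a fixed flat structure (local systems), whereas the hypothesis ``$\Ad(E)$ is a Schottky vector bundle'' is only a holomorphic isomorphism $\Ad(E)\cong V_{\sigma}$. For bare holomorphic bundles the relevant relation is analytic equivalence (Theorem \ref{analytic equivalence G bundles}), which is strictly coarser than conjugacy: the paper insists that $\mathbf{E}$ is far from injective, the fibres $\mathbf{W}^{-1}([E])$ are typically infinite, and by Lemma \ref{lem:ImQrho equal fibre of E} the fibre $\mathbf{E}^{-1}([E_{\rho}])$ is the image of the orbit map $Q_{\rho}$, which is positive-dimensional for $g\geq2$. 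Indeed, Theorem \ref{thm: main thm} shows that at a good unitary strict Schottky $\rho$ the tangent space to this fibre meets the tangent space of Schottky space only in $0$; consequently there exist representations $\sigma$ with $E_{\sigma}\cong E_{\rho}$ whose class does not lie in $\mathbb{S}$, so that $\sigma(\alpha_{i})\notin Z$ and hence $\Ad_{\sigma}(\alpha_{i})\neq\Id$ for some $i$, even though $\Ad_{\rho}(\alpha_{i})=\Id$ and $\Ad(E_{\sigma})=\Ad(E_{\rho})$ is Schottky. Had the flat connection you chose on $E$ had holonomy such a $\sigma$, your argument would ``prove'' $\Ad_{\sigma}(\alpha_{i})=\Id$, which is false: the proposition asserts only that \emph{some} Schottky representation induces $E$, and a correct argument must produce one, not claim the given holonomy is already Schottky.

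Your fallback --- reading $\Ad(E)\cong V_{\sigma}$ as an isomorphism of \emph{flat} bundles matching the connection induced from $E$ --- does make the rest formal, but it silently strengthens the hypothesis: the definition of a Schottky vector bundle (Remark \ref{rem: SchVecBd}) requires only a holomorphic isomorphism, with no compatibility with the induced flat structure on $\Ad(E)$, so under that reading you prove a weaker statement. To be fair, the paper's own proof is exactly as terse at this point, passing from ``$\Ad(E)$ is a Schottky vector bundle'' to ``$\Ad_{\rho}(\alpha_{i})$ is the identity'' in one sentence, which is literally valid only under that same strengthened reading (or for a suitable choice of $\rho$); so your proposal reproduces the paper's argument together with its weakest link, and the additional justification you supply --- ``faithfulness of the Riemann--Hilbert correspondence on isomorphism classes of bundles'' --- is precisely the statement the whole Schottky theory refutes. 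A genuine repair must either work with analytic equivalence directly (e.g.\ use the function $h$ of Theorem \ref{analytic equivalence G bundles} relating $E_{\Ad_{\rho}}$ to $V_{\sigma}$ to replace $\rho$ by an analytically equivalent Schottky representation), or choose the flat connection on $E$ compatibly with the strict Schottky flat structure on $\Ad(E)$, in the spirit of the lifting result of \cite{AzadBiswas} used in Theorem \ref{GSemisimpleSchottky-1}.
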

\begin{proof}
If $E$ is Schottky, Proposition \ref{EToAdESchottky} implies that
$\Ad(E)$ is Schottky. Conversely, suppose that $E$ admits a flat
$G$-connection. Then it is of the form $E\cong E_{\rho},$ for some
$\rho:\pi_{1}\to G$. Note that $\Ad(E_{\rho})\cong E_{\Ad_{\rho}}.$
Since by hypothesis $\Ad(E)$ is a Schottky vector bundle, this means
that $\Ad_{\rho}(\alpha_{i})$ is the identity morphism, $\forall i=1,\cdots,g$.
As $\ker(\Ad)=Z$ (because $G$ is reductive), we may conclude that
$\rho(\alpha_{i})\in Z$ for all $i=1,\cdots,g$, that is, $E\cong E_{\rho}$
where $\rho$ is a Schottky representation. 
\end{proof}
Moreover, when $G$ is a connected semisimple algebraic group, we
can drop the flatness condition above. 
\begin{thm}
\label{GSemisimpleSchottky-1} Let $G$ be a connected semisimple
algebraic group. Then $E$ is a Schottky $G$-bundle if and only if
the adjoint bundle $\Ad(E)$ is a Schottky vector bundle. \end{thm}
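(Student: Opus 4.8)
The forward implication requires no semisimplicity and is already available: if $E$ is a Schottky $G$-bundle, then Proposition \ref{EToAdESchottky} shows that $\Ad(E)$ is a (strict) Schottky vector bundle. So the entire content lies in the converse, and in view of Proposition \ref{AdEToESchottky} it suffices to establish a single fact: for semisimple $G$, if $\Ad(E)$ is Schottky then $E$ admits a flat holomorphic connection. Once $E$ is known to be flat, Proposition \ref{AdEToESchottky} immediately upgrades ``$\Ad(E)$ Schottky'' to ``$E$ Schottky''. Note also that a Schottky vector bundle is in particular flat, since it arises from a representation of $\pi_1$; hence $\Ad(E)$ is flat.

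The plan is to detect flatness of $E$ through its Atiyah class. On a Riemann surface a holomorphic connection is automatically flat (its curvature is a holomorphic $2$-form, which vanishes in complex dimension $1$), so ``$E$ is flat'' is equivalent to the vanishing of the Atiyah class $a(E)\in H^1(X,\Ad(E)\otimes\Omega^1_X)$, the obstruction to the existence of a holomorphic connection on $E$. Likewise, flatness of the vector bundle $\Ad(E)$ means $a(\Ad(E))=0$ in $H^1(X,\End(\Ad(E))\otimes\Omega^1_X)$. I would then invoke the naturality of the Atiyah class under extension of structure group along $\Ad:G\to GL(\mathfrak{g})$: the class $a(\Ad(E))$ is the image of $a(E)$ under the map on $H^1$ induced by the bundle morphism $\Ad(E)\to\End(\Ad(E))$ which fiberwise is $\ad:\mathfrak{g}\to\End(\mathfrak{g})$, $x\mapsto[x,-]$.

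The role of semisimplicity enters exactly here. For semisimple $\mathfrak{g}$ the map $\ad$ has trivial kernel, since the center of $\mathfrak{g}$ is zero, so $\mathfrak{g}\cong\ad(\mathfrak{g})$ sits inside $\End(\mathfrak{g})$ as a $G$-submodule, where $G$ acts by conjugation through $\Ad$. Because $G$ is connected, its $G$-submodules coincide with the $\mathfrak{g}$-submodules, and by complete reducibility of $G$-modules this submodule admits a $G$-invariant complement; this yields a $G$-equivariant projection $\End(\mathfrak{g})\to\mathfrak{g}$ splitting $\ad$. The splitting globalizes to a bundle retraction $\End(\Ad(E))\to\Ad(E)$, so the induced map $H^1(X,\Ad(E)\otimes\Omega^1_X)\to H^1(X,\End(\Ad(E))\otimes\Omega^1_X)$ is injective. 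Consequently $a(\Ad(E))=0$ forces $a(E)=0$, that is, $E$ admits a holomorphic (hence flat) connection, and Proposition \ref{AdEToESchottky} concludes the argument.

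I expect the main obstacle to be the careful justification of the naturality statement for the Atiyah class, together with the fact that the splitting of $\ad$ is genuinely $G$-equivariant and not merely $\mathfrak{g}$-equivariant; this is precisely where the connectedness and semisimplicity of $G$ are used, as they guarantee both that $G$-submodules of $\End(\mathfrak{g})$ are detected by the infinitesimal action and that a $G$-invariant complement to $\ad(\mathfrak{g})$ exists.
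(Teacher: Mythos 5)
Your argument is correct, and its skeleton is exactly the paper's: the forward direction is Proposition \ref{EToAdESchottky}, and the converse is reduced, via Proposition \ref{AdEToESchottky}, to the single claim that for semisimple $G$ flatness of $\Ad(E)$ forces flatness of $E$. The divergence is in how that claim is handled: the paper simply cites \cite[Proposition 2.2]{AzadBiswas} as a black box, whereas you prove it, via the Atiyah class. Your proof is sound: $a(\Ad(E))$ is the image of $a(E)$ under the map on $H^{1}$ induced by $\ad:\mathfrak{g}\to\End(\mathfrak{g})$ (naturality of the Atiyah class under extension of structure group, which goes back to Atiyah's original paper and does need the explicit identification of the adjoint bundle of the frame bundle of $\Ad(E)$ with $\End(\Ad(E))$); semisimplicity gives $Z(\mathfrak{g})=0$, so $\ad$ is injective, and it is $G$-equivariant since $\Ad_{g}\circ\ad_{x}\circ\Ad_{g}^{-1}=\ad_{\Ad_{g}x}$; complete reducibility of rational $G$-modules in characteristic zero (or, more concretely, the orthogonal complement of $\ad(\mathfrak{g})$ with respect to the trace form on $\End(\mathfrak{g})$) then yields the $G$-equivariant retraction, hence a split injection of sheaves and injectivity on $H^{1}$; and on a curve a holomorphic connection is automatically flat, so vanishing of the Atiyah class is indeed equivalent to flatness. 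In effect you have reproved the cited Azad--Biswas proposition, which is itself established along these lines, so your version is best viewed as a self-contained expansion of the paper's three-line proof. What your route buys is transparency about where semisimplicity enters: for reductive non-semisimple $G$ the map $\ad$ kills the center of $\mathfrak{g}$, so the $H^{1}$ comparison loses exactly the central directions --- consistent with the paper's example of the frame bundle of a line bundle of nonzero degree, where $\Ad(E)$ is trivial (here $\ad=0$) yet $E$ is not Schottky. One small inessential remark: your detour through the coincidence of $G$-submodules and $\mathfrak{g}$-submodules for connected $G$ is not needed, since complete reducibility of the rational representation $\End(\mathfrak{g})$ of the reductive group $G$ already provides the invariant complement directly.
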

\begin{proof}
By Proposition \ref{EToAdESchottky}, if $E$ is Schottky, $\Ad(E)$
is Schottky too. Conversely, assume that $\Ad(E)$ is a Schottky vector
bundle. Then, $\Ad(E)$ admits a flat connection and \cite[Proposition 2.2]{AzadBiswas}
proved that, because $G$ is semisimple, $E$ admits a flat connection
too. So, the conditions of Proposition \ref{AdEToESchottky} are fulfilled,
and $E$ is a Schottky $G$-bundle. 
\end{proof}

\section{Topological type \label{sec:Topological-type}}

The moduli space of $G$-bundles over a compact Riemann surface is
a disjoint union of connected components indexed by $\pi_{1}\left(G\right)$,
the fundamental group of $G$ (see \cite{Ramanathan1}, \cite{GarciaOliveira}).
In this section, we show that all Schottky $G$-bundles over a compact
Riemann surface $X$ have trivial topological type, corresponding
to the identity element $0\in\pi_{1}\left(G\right)$. Therefore, any
Schottky $G$-bundle $E$ is globally trivial in the smooth category,
although it is generally non-trivial as a flat, or as an algebraic
principal bundle.

\subsection{Topological types of $G$-bundles}

In this subsection, $G$ is just a \emph{connected topological group}
which admits a universal cover (this is the case provided $G$ is
locally path connected and semilocally simply connected). To characterize
$G$-bundles topologically, consider the short exact sequence of group
homomorphisms 
\begin{equation}
1\rightarrow\ker p\rightarrow\widetilde{G}\overset{\,\,p\,\,}{\rightarrow}G\rightarrow1,\label{eq:short-exact-sequence}
\end{equation}
where $p:\widetilde{G}\to G$ is a universal cover. It is known that
$\ker p\cong\pi_{1}\left(G\right)$ is a discrete subgroup of the
center of $\widetilde{G}$, so that \eqref{eq:short-exact-sequence}
defines $\widetilde{G}$ as a central extension of $G$ (cf. also
Lemma \ref{lem:centro} below). The exact sequence \eqref{eq:short-exact-sequence}
induces a short exact sequence of sheaves 
\[
1\rightarrow\pi_{1}(G)\rightarrow\underline{\widetilde{G}}\overset{\,\,p\,\,}{\rightarrow}\underline{G}\rightarrow1,
\]
where the underline denotes the sheaf of continuous functions defined
on open subsets of the base $X$ into the corresponding group. In
turn, we get an exact sequence in (non-abelian) sheaf cohomology,
with an associated coboundary map: 
\[
H^{1}\left(X,\:\underline{G}\right)\stackrel{\delta}{\longrightarrow}H^{2}\left(X,\,\pi_{1}(G)\right)\cong\pi_{1}(G),
\]
whose right isomorphism comes from using the orientation on $X$ (see,
for example \cite{GoldmanTopCompSR}). The map $\delta$ serves to
define the \emph{topological type} of a $G$-bundle. Namely, interpreting
an isomorphism class of a $G$-bundle $E$ as an element of $H^{1}\left(X,\:\underline{G}\right)$
we define its topological type as (see also \cite[Remark 5.2]{Ramanathan1})
\[
\delta(E):=\delta([E])\in\pi_{1}(G).
\]
The topological type is functorial in the sense that, if a $H$-bundle
$E_{H}$ is obtained from a $G$-bundle $E_{G}$ by extension of the
structure group $\phi:G\to H$, then: 
\begin{equation}
\delta(E_{H})=\phi_{*}(\delta(E_{G})),\label{eq:extension}
\end{equation}
using the induced morphism $\phi_{*}:\pi_{1}(G)\to\pi_{1}(H)$ (see
\cite[Remark 5.1]{Ramanathan1}). The following simple lemma should
be well known, but we include a proof for convenience of the reader.
\begin{lem}
\label{lem:centro} Let $G$ be a connected, locally path connected
and semilocally simply connected topological group, and $p:\widetilde{G}\to G$
be a universal cover of $G$. Then $Z_{\widetilde{G}}=p^{-1}\left(Z_{G}\right)$
and $p\left(Z_{\widetilde{G}}\right)=Z_{G}$. \end{lem}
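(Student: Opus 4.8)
The plan is to prove the two claimed equalities $Z_{\widetilde{G}}=p^{-1}(Z_G)$ and $p(Z_{\widetilde{G}})=Z_G$ for the universal cover $p:\widetilde{G}\to G$, and I expect the inclusion $Z_{\widetilde{G}}\supseteq p^{-1}(Z_G)$ to be the only step requiring genuine care, since the reverse inclusions and the surjectivity statement follow formally.

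First I would establish $Z_{\widetilde{G}}\subseteq p^{-1}(Z_G)$. This is the easy direction and uses only that $p$ is a surjective group homomorphism. If $\tilde z\in Z_{\widetilde{G}}$, then for every $\tilde g\in\widetilde{G}$ we have $\tilde z\tilde g=\tilde g\tilde z$, and applying $p$ gives $p(\tilde z)p(\tilde g)=p(\tilde g)p(\tilde z)$; since $p$ is surjective, $p(\tilde z)$ commutes with every element of $G$, so $p(\tilde z)\in Z_G$, i.e. $\tilde z\in p^{-1}(Z_G)$.

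The substantive direction is $p^{-1}(Z_G)\subseteq Z_{\widetilde{G}}$. Here I would fix $\tilde z\in p^{-1}(Z_G)$, so that $p(\tilde z)\in Z_G$, and aim to show $\tilde z$ is central in $\widetilde{G}$. The idea is to consider, for each fixed $\tilde g\in\widetilde{G}$, the commutator $c(\tilde g):=\tilde z\,\tilde g\,\tilde z^{-1}\,\tilde g^{-1}$. Applying $p$ and using that $p(\tilde z)$ is central in $G$ shows $p(c(\tilde g))=e$, so $c(\tilde g)\in\ker p=\pi_1(G)$, a discrete central subgroup of $\widetilde{G}$. The map $\tilde g\mapsto c(\tilde g)$ is continuous on the connected group $\widetilde{G}$ and takes values in the discrete set $\ker p$; hence it is constant, and evaluating at $\tilde g=e$ gives the constant value $e$. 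Therefore $c(\tilde g)=e$ for all $\tilde g$, i.e. $\tilde z$ commutes with every $\tilde g\in\widetilde{G}$, so $\tilde z\in Z_{\widetilde{G}}$. The main obstacle is precisely this connectedness-plus-discreteness argument: one must use that $\widetilde{G}$ is connected (as a universal cover of the connected group $G$) together with the fact, recalled just before the lemma, that $\ker p\cong\pi_1(G)$ is discrete and central, so that a continuous $\ker p$-valued function on a connected space is locally constant, hence globally constant.

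Finally I would deduce $p(Z_{\widetilde{G}})=Z_G$ from the first equality. Applying $p$ to $Z_{\widetilde{G}}=p^{-1}(Z_G)$ and using surjectivity of $p$ gives $p(Z_{\widetilde{G}})=p(p^{-1}(Z_G))=Z_G$, which closes the proof. I would remark in passing that the hypotheses on $G$ (connected, locally path connected, semilocally simply connected) enter only to guarantee the existence of the universal cover $p$ and the connectedness of $\widetilde{G}$ together with the discreteness and centrality of $\ker p$, all of which are exactly what the above argument consumes.
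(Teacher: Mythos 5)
Your proof is correct and takes essentially the same approach as the paper's: both directions hinge on the observation that the commutator of $\tilde{z}$ with group elements defines a continuous map into the discrete central subgroup $\ker p\cong\pi_{1}(G)$, which must therefore be constant and equal to its value $\tilde{e}$ at the identity. The only cosmetic difference is that you run the discreteness argument over the whole connected group $\widetilde{G}$, while the paper runs it along a path $\lambda:[0,1]\to\widetilde{G}$ from $\tilde{e}$ to $\tilde{h}$; both variants are valid and consume exactly the same hypotheses.
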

\begin{proof}
Let $\tilde{z}\in Z_{\widetilde{G}}$. Since $p$ is surjective, for
all $h\in G$, there is $\tilde{h}\in p^{-1}(h)\subset\widetilde{G}$,
and we obtain 
\[
hp(\tilde{z})=p(\tilde{h})p(\tilde{z})=p(\tilde{h}\tilde{z})=p(\tilde{z})p(\tilde{h})=p(\tilde{z})h,
\]
showing that $p\left(\tilde{z}\right)\in Z_{G}$. We conclude that
$Z_{\widetilde{G}}\subset p^{-1}(Z_{G})$.

Conversely, let $z\in Z_{G}$ and fix $\tilde{z}\in p^{-1}(z)\subset\widetilde{G}$.
We want to show that $\tilde{z}\in Z_{\widetilde{G}}$. Since $\widetilde{G}$
is path connected, given $\tilde{h}\in\widetilde{G}$ there is a continuous
path $\lambda:[0,1]\to\widetilde{G}$ with $\lambda(0)=\tilde{e}$
and $\lambda(1)=\tilde{h}$, where $\tilde{e}$ is the identity element
of $\widetilde{G}$. Since $p(\tilde{z}\lambda(t)\tilde{z}^{-1}\lambda(t)^{-1})=zp(\lambda(t))z^{-1}p(\lambda(t))^{-1}=e$,
the following map is well defined and continuous: 
\[
\begin{array}{ccc}
\Psi:[0,1] & \to & \ker p\\
t & \mapsto & \tilde{z}\lambda(t)\tilde{z}^{-1}\lambda(t)^{-1}.
\end{array}
\]
Noting that $\ker p\cong\pi_{1}(G)\subset Z_{\widetilde{G}}$ is a
discrete subgroup of $\widetilde{G}$, the image of $\Psi$ is constant
and so $\Psi([0,1])=\{\tilde{e}\}$. Thus: 
\[
\tilde{e}=\Psi(0)=\Psi(1)=\tilde{z}\tilde{h}\tilde{z}^{-1}\tilde{h}^{-1},
\]
showing that $\tilde{z}\in Z_{\widetilde{G}}$. Finally $p\left(Z_{\widetilde{G}}\right)=Z_{G}$
is a simple consequence of $Z_{\widetilde{G}}=p^{-1}\left(Z_{G}\right)$. 
\end{proof}

\subsection{Topological triviality of Schottky $G$-bundles}

Now, we return to the case where $G$ is a \emph{connected complex
reductive group}, and suppose that $E$ is a flat $G$-bundle, a bundle
isomorphic to $E_{\rho}$ for some $\rho:\pi_{1}\to G$. Then, the
value $\delta(E)$ lies, in fact, in the subgroup $\pi_{1}(DG)\subset\pi_{1}(G)$
coming from the natural inclusion $DG\hookrightarrow G$, where $DG$
is the derived group of $G$. Moreover, in \cite{Ramanathan1}, Ramanathan
defined a natural map from connected components of $\hom(\pi_{1},G)$
to $\pi_{1}(DG)$. More precisely, the following statement was recently
shown in \cite[Appendix]{LawtonRamrasHoLiu} (following \cite{Li}
and \cite{Ramanathan1}).
\begin{thm}
For any complex reductive group $G$, there is a natural bijection
\[
\pi_{0}(\hom(\pi_{1},G))\cong\pi_{1}(DG).
\]

\end{thm}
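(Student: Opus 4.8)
The plan is to construct an explicit obstruction invariant and show it induces the asserted bijection. Fix the universal cover $p:\widetilde G\to G$ of Section \ref{sec:Topological-type}, and recall that $\ker p\cong\pi_1(G)$ is a discrete \emph{central} subgroup of $\widetilde G$. Given $\rho\in\hom(\pi_1,G)$, choose \emph{any} lifts $\widetilde{\rho(\alpha_i)},\,\widetilde{\rho(\beta_i)}\in\widetilde G$ of $\rho(\alpha_i),\rho(\beta_i)$ and define
\[
o(\rho)\ :=\ \prod_{i=1}^{g}\big[\widetilde{\rho(\alpha_i)},\,\widetilde{\rho(\beta_i)}\big]\ \in\ \widetilde G .
\]
Since $\ker p$ is central, replacing a lift by a $\ker p$-multiple leaves each commutator unchanged, so $o(\rho)$ is independent of the choices; moreover $p(o(\rho))=\rho\big(\prod_i[\alpha_i,\beta_i]\big)=e$, so $o(\rho)\in\ker p$. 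As the lifts may be chosen continuously over a small neighbourhood while $\ker p$ is discrete, $o$ is locally constant and descends to $\bar o:\pi_0(\hom(\pi_1,G))\to\pi_1(G)$.

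Second, I would locate the image of $\bar o$. Using the reductive splitting $\mathfrak g=\mathfrak z\oplus[\mathfrak g,\mathfrak g]$ one has $\widetilde G\cong V\times\widetilde{DG}$, where $V\cong\mathbb C^{\dim Z^{\circ}}$ is the vector cover of the central torus $Z^{\circ}$ and $\widetilde{DG}$ is the simply connected cover of the semisimple group $DG$. Since $V$ is abelian and $\widetilde{DG}$ is perfect, $D\widetilde G=\widetilde{DG}$, and every commutator — hence $o(\rho)$ — lies there. On the other hand $p|_{\widetilde{DG}}:\widetilde{DG}\to DG$ is precisely the universal cover of $DG$, with kernel $\widetilde{DG}\cap\ker p=\pi_1(DG)$. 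Therefore $o(\rho)\in\pi_1(DG)$, and $\bar o$ takes values in $\pi_1(DG)\subset\pi_1(G)$.

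For surjectivity it suffices to realise an arbitrary $c\in\pi_1(DG)=\ker(\widetilde{DG}\to DG)$, a central element of $\widetilde{DG}$, as a single commutator $c=[\tilde a,\tilde b]$ (the remaining $g-1$ pairs being sent to $e$). This follows from the classical fact that the commutator map of a compact connected semisimple group is surjective: applying it inside a maximal compact subgroup of $\widetilde{DG}$, which contains the finite center, yields $\tilde a,\tilde b\in\widetilde{DG}$ with $[\tilde a,\tilde b]=c$. Setting $\rho(\alpha_1)=p(\tilde a)$, $\rho(\beta_1)=p(\tilde b)$, and all other generators to $e$, gives a genuine homomorphism (the surface relation holds because $p(c)=e$) with $o(\rho)=c$; this works for every $g\ge1$.

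The essential difficulty is injectivity: that each fiber of $\bar o$ is a single connected component, equivalently that two representations with the same obstruction can be joined by a path in $\hom(\pi_1,G)$. This is the technical heart of the result and is established by Li's deformation analysis for connected reductive Lie groups (refining the work of \cite{Ramanathan1}), whose compact-group counterpart is the Morse-theoretic study of the Yang--Mills functional: the non-minimal strata of the representation variety have large codimension, so that distinct connected components are separated precisely by the value of $o$. One may also first reduce to a maximal compact $K\subset G$, using $\pi_1(DK)=\pi_1(DG)$, and invoke the compact case directly. I expect this connectedness of the fibers to be the main obstacle, the remaining steps being essentially formal — which is why the statement is credited to \cite{Ramanathan1}, \cite{Li}, and the appendix of \cite{LawtonRamrasHoLiu}.
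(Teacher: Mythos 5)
Your proposal cannot diverge from ``the paper's own proof'' for a simple reason: the paper does not prove this statement at all --- it is quoted verbatim from the literature, with the attribution \cite[Appendix]{LawtonRamrasHoLiu}, following \cite{Li} and \cite{Ramanathan1}, given immediately before the theorem. What you have written is, in effect, a correct sketch of the standard argument contained in those references. The formal parts are all sound: centrality of $\ker p\cong\pi_{1}(G)$ makes the product of commutators of lifts independent of choices; discreteness of $\ker p$ plus local continuous lifting makes $o$ locally constant, so it descends to $\pi_{0}$; the decomposition $\widetilde{G}\cong V\times\widetilde{DG}$ (vector part covering $Z^{\circ}$, simply connected cover of $DG$) correctly places $o(\rho)$ in $\widetilde{DG}\cap\ker p=\ker\bigl(\widetilde{DG}\to DG\bigr)=\pi_{1}(DG)$, which also recovers the injectivity of $\pi_{1}(DG)\to\pi_{1}(G)$; and your surjectivity argument via Goto's theorem (every element of a compact connected semisimple group is a commutator, applied in a maximal compact of $\widetilde{DG}$, which contains its finite center) is the standard one and, as you note, works uniformly for $g\geq1$ --- for $g=1$ the relation $[\rho(\alpha_{1}),\rho(\beta_{1})]=e$ holds precisely because $c\in\ker p$. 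Your deferral of injectivity --- that each fiber of $\bar{o}$ is connected --- to \cite{Li} and the Yang--Mills/Harder--Narasimhan analysis is exactly where the paper itself defers, so relative to the paper there is no gap. One caveat: your parenthetical suggestion to ``first reduce to a maximal compact $K\subset G$ \ldots{} and invoke the compact case directly'' is not the quick alternative it sounds like. For surface groups (unlike free groups, where $\Hom(F_{g},G)=G^{g}$ visibly retracts onto $K^{g}$) there is no evident deformation retraction of $\Hom(\pi_{1},G)$ onto $\Hom(\pi_{1},K)$, and showing that the two spaces have the same $\pi_{0}$ is essentially the content of the cited appendix rather than a shortcut around it; so that sentence should be read as a pointer to the literature, not as a second proof.
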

In particular, for groups whose derived group is not simply connected,
there exist \emph{flat bundles} $E$ which are \emph{not topologically
trivial}. By contrast, all Schottky bundles are topologically trivial,
as we now show.
\begin{thm}
\label{thm:Schottky bdl has trivial type} Let $G$ be a connected
complex reductive group, and let $E$ be a Schottky $G$-bundle. Then
$E$ has trivial topological type. \end{thm}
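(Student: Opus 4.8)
The plan is to compute the topological type $\delta(E)$ directly from a Schottky presentation of $E$ and exhibit its vanishing via the obstruction to lifting along the universal cover $p:\widetilde{G}\to G$, with Lemma~\ref{lem:centro} as the decisive input. Since $E$ is Schottky, write $E\cong E_{\rho}$ for some $\rho\in\hom(\pi_1,G)$ with $\rho(\alpha_i)\in Z$ for all $i=1,\dots,g$. Being flat, $E_\rho$ has its topological type governed by the obstruction to lifting $\rho$ along the central extension $1\to\pi_1(G)\to\widetilde{G}\xrightarrow{p}G\to1$ of \eqref{eq:short-exact-sequence}, and the point is that this obstruction is a product of commutators whose factors will be forced to be trivial.

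First I would recall the standard description of this obstruction (as in \cite{GoldmanTopCompSR}). Choose arbitrary lifts $\tilde a_i\in p^{-1}(\rho(\alpha_i))$ and $\tilde b_i\in p^{-1}(\rho(\beta_i))$ in $\widetilde{G}$. Applying $p$ to $\prod_{i=1}^{g}[\tilde a_i,\tilde b_i]$ yields $\prod_{i=1}^{g}[\rho(\alpha_i),\rho(\beta_i)]=\rho\big(\prod_{i}\alpha_i\beta_i\alpha_i^{-1}\beta_i^{-1}\big)=e$, so this product lies in $\ker p\cong\pi_1(G)$. Since $\ker p$ is central in $\widetilde{G}$, replacing any $\tilde a_i$ or $\tilde b_i$ by a $\ker p$-multiple leaves each commutator unchanged, so the product is independent of the chosen lifts, and under $H^2(X,\pi_1(G))\cong\pi_1(G)$ it computes the topological type:
\begin{equation*}
\delta(E_\rho)=\prod_{i=1}^{g}[\tilde a_i,\tilde b_i]\in\pi_1(G).
\end{equation*}
I would either cite this formula or verify it by chasing the coboundary map $\delta$ through a \v{C}ech cocycle description of $E_\rho$ adapted to the CW structure of $X$ with a single $2$-cell attached along $\prod_i[\alpha_i,\beta_i]$.

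The conclusion is then immediate from Lemma~\ref{lem:centro}. Since $\rho(\alpha_i)\in Z=Z_G$, every lift satisfies $\tilde a_i\in p^{-1}(Z_G)=Z_{\widetilde{G}}$, i.e.\ $\tilde a_i$ is central in $\widetilde{G}$; hence $[\tilde a_i,\tilde b_i]=\tilde a_i\tilde b_i\tilde a_i^{-1}\tilde b_i^{-1}=e$ for every $i$, giving $\delta(E_\rho)=e$. I would emphasize that this argument is insensitive to whether the central values $\rho(\alpha_i)$ lie in the identity component $Z^{\circ}$: even when some $\rho(\alpha_i)$ sits in a nontrivial component of $Z$ (so that $\rho$ cannot be connected to a strict Schottky representation within $\mathcal{S}$), centrality of the lift still kills the commutator. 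This is precisely why a naive connectedness or deformation argument would be insufficient and the commutator formula is essential.

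The main obstacle I anticipate is the identification in the middle step: matching the sheaf-theoretic coboundary map $\delta:H^1(X,\underline{G})\to H^2(X,\pi_1(G))$ that defines the topological type with the group-cohomological commutator obstruction $\prod_i[\tilde a_i,\tilde b_i]$. This is standard but bookkeeping-heavy; once it is in place, the centrality observation closes the proof in a single line.
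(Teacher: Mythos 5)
Your proof is correct, and it takes a recognizably different route from the paper's, although both hinge on the same decisive input, Lemma~\ref{lem:centro}. The paper never writes down the commutator formula for $\delta$: instead, it lifts the free-group representation $\rho_F:F_g\to Z\times G$ to $\widetilde{\rho}:F_g\to\widetilde{G}\times\widetilde{G}$ (automatic, since $F_g$ is free), uses Lemma~\ref{lem:centro} to see that the first component lands in $Z_{\widetilde{G}}$ --- so that $\widetilde{\rho}$ again satisfies the surface-group relation and defines a Schottky representation of $\pi_1$ into $\widetilde{G}$ --- and then concludes by two soft facts: a flat $\widetilde{G}$-bundle is topologically trivial because $H^2(X,\pi_1(\widetilde{G}))=0$, and $\delta$ is functorial under extension of structure group, Equation \eqref{eq:extension} with $\phi=p\times p$. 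Your argument instead makes the obstruction explicit via Goldman's formula $\delta(E_\rho)=\prod_{i=1}^{g}[\tilde a_i,\tilde b_i]$ and kills each factor by centrality of $\tilde a_i\in p^{-1}(Z_G)=Z_{\widetilde{G}}$. The two proofs are in fact dual faces of one statement: the vanishing of your commutator product is precisely the condition that the paper's lifted generators define a genuine representation of $\pi_1$ into $\widetilde{G}$. What each buys: your version is shorter and more self-contained at the level of the class $\delta$ itself, but it imports the nontrivial identification of the sheaf-theoretic coboundary with the commutator obstruction (you correctly flag this as the bookkeeping-heavy step; citing \cite{GoldmanTopCompSR} for it is legitimate, and your well-definedness check --- replacing $\tilde a_i$ by a central multiple leaves $[\tilde a_i,\tilde b_i]$ unchanged --- is right); the paper's version avoids any explicit formula by reusing machinery it has already set up. Your closing observation is also well taken and matches the paper's logic exactly: since $\rho(\alpha_i)$ may lie in a non-identity component of $Z$, no deformation to the strict case is available within $\mathcal{S}$, and it is the full strength of Lemma~\ref{lem:centro} ($p^{-1}(Z_G)=Z_{\widetilde{G}}$, not merely lifting of $Z^{\circ}$) that makes the commutators vanish.
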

\begin{proof}
If $E\cong E_{\rho}$ is Schottky then it is defined by a representation
$\rho:F_{g}\rightarrow Z_{G}\times G$. Since $F_{g}$ is free, we
can lift $\rho$ to a representation $\widetilde{\rho}:F_{g}\to\widetilde{G}\times\widetilde{G}$,
verifying $\rho=(p\times p)\circ\widetilde{\rho}$, where as above,
$p:\widetilde{G}\to G$ is the universal cover ($\widetilde{G}$ is
a complex Lie group, not necessarily algebraic). By Lemma \ref{lem:centro},
we see that $\widetilde{\rho}$ has image in $Z_{\widetilde{G}}\times\widetilde{G}\subset\widetilde{G}\times\widetilde{G}$,
so it defines a Schottky representation inside $\hom(\pi_{1},\widetilde{G})$.
The corresponding $\widetilde{G}$-bundle $E_{\widetilde{\rho}}$
is topological trivial, since $\pi_{1}(\widetilde{G})$ is trivial.
Finally, as $E_{\rho}=E_{(p\times p)\circ\widetilde{\rho}}$ is obtained
by extension of structure group, it is also topological trivial, by
Equation \eqref{eq:extension} with $\phi=p\times p$.
\end{proof}
By \cite[Theorem 5.9]{Ramanathan1}, the components of the moduli
space of \emph{semistable} $G$-bundles $\mathcal{M}_{G}$ over a
Riemann surface $X$ are normal projective varieties indexed by the
topological types of $G$-bundles. Thus, we can write the moduli space
$\mathcal{M}_{G}$ as a disjoint union 
\[
\mathcal{M}_{G}=\bigsqcup_{\delta\in\pi_{1}(G)}\mathcal{M}_{G}^{\delta}.
\]
where $\mathcal{M}_{G}^{\delta}$ denotes the moduli space of semistable
$G$-bundles with topological type $\delta\in\pi_{1}(G)$. In \cite{GarciaOliveira},
the theory of $G$-Higgs bundles is used to prove the non-emptiness
of the moduli spaces $\mathcal{M}_{G}^{\delta}$, for each topological
type $\delta\in\pi_{1}(G)$ (see also \cite[Proposition 7.7]{Ramanathan2}).
In particular, $\mathcal{M}_{G}^{0}\subset\mathcal{M}_{G}$ is connected. 
\begin{cor}
\label{cor: mod sp ss Schottky is contained conn comp} The isomorphism
class of a semistable Schottky $G$-bundle $E$ lies in the connected
component $\mathcal{M}_{G}^{0}$.
\end{cor}

\section{The Uniformization Map\label{sec:Uniformization-map}}

The association of a $G$-bundle to a representation of $\pi_{1}$
was called the uniformization map in section \ref{sec:Schottky-G-bundles}.
In this section, we introduce the notion of analytic equivalence (see
\cite{Florentino}), consider the tangent space of Schottky space
at good representations, and define the period map.

\subsection{Analytic equivalence}

Recall that the uniformization map \eqref{eq:uniformization-map}
\begin{equation}
\begin{array}{cccc}
\mathbf{E}: & \mathbb{B}:=\Hom\left(\pi_{1},G\right)\quot G & \to & M_{G}\\
 & [\rho] & \mapsto & [E_{\rho}]
\end{array}\label{eq: map E-1}
\end{equation}
is surjective but, in general, not injective. This leads us to consider
what we call \emph{analytic equivalence}. 
\begin{defn}
Two representations $\rho,\,\sigma\in\Hom\left(\pi_{1},G\right)$
are called \emph{analytically equivalent} if their associated $G$-bundles
are isomorphic, so that $E_{\rho}\cong E_{\sigma}$, or equivalently
$\mathbf{E}[\rho]=\mathbf{E}[\sigma]$.
\end{defn}
The next result provides two useful criteria for analytic equivalence,
generalizing Lemma 2 of (\cite{Florentino}) (see also \cite{Gunning}),
one of them in terms of holomorphic sections of $\Omega_{X}^{1}$,
the canonical line bundle of $X$. Let $p:Y\to X$ be a universal
covering map of $X$. 
\begin{thm}
\label{analytic equivalence G bundles} Let $\rho,\,\sigma\in\Hom(\pi_{1},\,G)$
and $y_{0}\in Y$. Then the following conditions are equivalent:
\begin{enumerate}
\item $E_{\sigma}\cong E_{\rho}$, that is $\sigma$ and $\rho$ are analytically
equivalent; 
\item There exists a holomorphic function $h:Y\rightarrow G$ such that
\[
h(\gamma\cdot y)=\rho(\gamma)h(y)\sigma(\gamma)^{-1},\quad\forall\gamma\in\pi_{1},y\in Y;
\]

\item There exists $\omega\in H^{0}(X,\mathrm{Ad}(E_{\sigma})\otimes\Omega_{X}^{1})$
such that 
\[
\sigma(\gamma)=h_{\omega}(\gamma\cdot y)\rho(\gamma)h_{\omega}(y)^{-1},\quad\forall\gamma\in\pi_{1},y\in Y
\]
where $h_{\omega}$ is the unique solution of the differential equation
$h^{-1}dh=\omega$ with the initial condition $h(y_{0})=e\in G$. 
\end{enumerate}
\end{thm}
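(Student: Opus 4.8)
The plan is to prove the three conditions equivalent by establishing the cycle $(1)\Leftrightarrow(2)$ and $(2)\Leftrightarrow(3)$, with the bulk of the geometric content lying in the first equivalence.

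\medskip

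\textbf{Step 1: $(1)\Leftrightarrow(2)$.} First I would unwind the definition of $E_\rho=(Y\times G)/_{\!\!\rho}\,\pi_1$ and of a bundle isomorphism. An isomorphism $\Phi:E_\sigma\to E_\rho$ of principal $G$-bundles covering the identity on $X$ lifts to a $\pi_1$-equivariant, $G$-equivariant biholomorphism $\widetilde\Phi:Y\times G\to Y\times G$ covering the identity on $Y$. Because $\widetilde\Phi$ must commute with the right (or here left) $G$-action and cover $\mathrm{id}_Y$, it is determined by a single holomorphic map $h:Y\to G$ via $\widetilde\Phi(y,g)=(y,\,h(y)g)$. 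The requirement that $\widetilde\Phi$ descend to the quotients, i.e. that it intertwine the two $\pi_1$-actions defined in \eqref{eq:E_rho} through $\sigma$ on the source and through $\rho$ on the target, translates precisely into the cocycle-type identity $h(\gamma\cdot y)=\rho(\gamma)\,h(y)\,\sigma(\gamma)^{-1}$. Conversely, any holomorphic $h$ satisfying this identity yields a well-defined bundle isomorphism by reversing the computation. The main thing to check carefully is the bookkeeping of left versus right actions (the footnote in the excerpt flags that a left action was deliberately chosen), so I would fix conventions once at the start and verify that the equivariance identity comes out with the factors in the stated order.

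\medskip

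\textbf{Step 2: $(2)\Leftrightarrow(3)$.} Here I would pass from the multiplicative functional equation for $h$ to an infinitesimal (differential-form) description. Given $h$ as in (2), form the left logarithmic derivative $\omega:=h^{-1}\,dh$, a holomorphic $\mathfrak g$-valued $1$-form on $Y$. Differentiating the relation $h(\gamma\cdot y)=\rho(\gamma)h(y)\sigma(\gamma)^{-1}$ and using that $\rho(\gamma),\sigma(\gamma)$ are constant shows that $\omega$ transforms under $\pi_1$ exactly as a section of $\Ad(E_\sigma)\otimes\Omega_X^1$, hence descends to an element $\omega\in H^0(X,\Ad(E_\sigma)\otimes\Omega_X^1)$. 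Conversely, given such an $\omega$, the equation $h^{-1}dh=\omega$ with initial condition $h(y_0)=e$ has a unique holomorphic solution $h_\omega:Y\to G$ because $Y$ is simply connected and $\omega$ is (necessarily) flat/integrable as a logarithmic derivative on a Riemann surface; then one recovers the conjugation formula $\sigma(\gamma)=h_\omega(\gamma\cdot y)\,\rho(\gamma)\,h_\omega(y)^{-1}$, which is just the identity in (2) solved for $\sigma(\gamma)$. This step generalizes Lemma 2 of \cite{Florentino} from $GL_n\mathbb C$ to arbitrary reductive $G$, so the one genuinely new point is justifying the integrability and global existence of $h_\omega$ in the nonlinear (group-valued) setting rather than the linear one.

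\medskip

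\textbf{Main obstacle.} I expect the delicate point to be the existence and single-valuedness of the solution $h_\omega$ in condition (3): in the vector-bundle case one solves a \emph{linear} ODE, but for a general group $G$ one must solve a nonlinear equation $h^{-1}dh=\omega$, invoking the fact that a $\mathfrak g$-valued $1$-form on a \emph{simply connected} Riemann surface integrates to a $G$-valued map (this is the nonabelian period/monodromy vanishing, valid because $Y$ is simply connected and the Maurer--Cartan curvature $d\omega+\tfrac12[\omega,\omega]$ vanishes automatically in complex dimension one for a holomorphic $(1,0)$-form). I would make sure to state explicitly why the equation is integrable and why the solution is globally defined and holomorphic on all of $Y$, since that is exactly where the passage from vector bundles to principal bundles requires care.
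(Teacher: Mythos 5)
Your proposal is correct and follows essentially the same route as the paper: the equivalence $(1)\Leftrightarrow(2)$ is obtained by trivializing the pullback bundles on the universal cover $Y$ and reading off the $\pi_{1}$-equivariance of the resulting map $h:Y\to G$ (the paper phrases this via equivariant sections rather than an equivariant automorphism of $Y\times G$, but the computation is identical), and $(2)\Leftrightarrow(3)$ by taking the logarithmic derivative $\omega=h^{-1}dh$, checking it transforms as a section of $\Ad(E_{\sigma})\otimes\Omega_{X}^{1}$, and conversely integrating $h^{-1}dh=\omega$ on the simply connected $Y$. Your explicit justification of global integrability via the automatic vanishing of the Maurer--Cartan curvature for holomorphic $1$-forms in complex dimension one makes precise a point the paper leaves implicit, but it is a refinement of the same argument, not a different method.
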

\begin{proof}
$(1)\Leftrightarrow(2)$ Since the pullback $p^{*}\left(E_{\sigma}\right)\to Y$
using $p:Y\to X$ is a holomorphically trivial $G$-bundle on $Y$,
its sections $s_{\sigma}$ can be viewed as holomorphic maps $s_{\sigma}:Y\to G$
satisfying $s_{\sigma}(\gamma\cdot y)=\sigma(\gamma)s_{\sigma}(y)$
for all $\gamma\in\pi_{1},\,y\in Y$ (and similarly for $E_{\rho}$).
Analogously, an isomorphism $\psi:E_{\sigma}\to E_{\rho}$ is given
by an isomorphism between the pullback bundles $\tilde{\psi}:p^{*}(E_{\sigma})\to p^{*}(E_{\rho})$
satisfying $\tilde{\psi}(y,g)=(y,\,h(y)\,g),\ \forall(y,g)\in Y\times G$,
for some holomorphic $h:Y\to G$. Since $\tilde{\psi}$ sends a section
of $p^{*}\left(E_{\sigma}\right)$ to a section of $p^{*}\left(E_{\rho}\right)$
we have $h(y)s_{\sigma}(y)=s_{\rho}(y)$, for all $y\in Y$, which
implies, $h(\gamma\cdot y)\sigma(\gamma)s_{\sigma}(y)=\rho(\gamma)h(y)s_{\sigma}(y)$,
as wanted.

$(2)\Leftrightarrow(3)$ Writing $h^{\gamma}(y):=h(\gamma\cdot y)$
we have, from (2), the equation $\rho(\gamma)=h^{\gamma}\sigma(\gamma)h^{-1}$,
for all $\gamma\in\pi_{1}$, as holomorphic functions on $Y$. Taking
the exterior derivative we get 
\[
0=d(h^{\gamma})\gamma'\sigma(\gamma)h^{-1}-h^{\gamma}\sigma(\gamma)h^{-1}dh\,h^{-1},
\]
($\gamma'$ is the derivative of $\gamma:y\mapsto\gamma y$) which
is equivalent to $(h^{\gamma})^{-1}dh^{y}\,\gamma'=\sigma(\gamma)h^{-1}dh\sigma(\gamma)^{-1}$,
for all $\gamma\in\pi_{1}$. Setting $\omega:=h^{-1}dh$, this equation
can be rewritten as $\Ad_{\sigma}(\gamma)\cdot\omega=\omega^{\gamma}\gamma'$,
which precisely means that the 1-form $\omega$ is the pullback to
$Y$ of a holomorphic section (still denoted the same) $\omega\in H^{0}(X,\Ad(E_{\sigma})\otimes\Omega_{X}^{1})$.
Conversely, since the solution of the differential equation $\omega=h^{-1}dh$
with the condition $h(y_{0})=e$ over the simply connected space $Y$
is unique and satisfies the equality $(3)$ then, obviously it satisfies
$(2)$.
\end{proof}
It is clear that Schottky space is different from the strict Schottky
space, when the center $Z$ is nontrivial. However, there is no need
to distinguish the strict and non-strict cases when considering their
associated bundles, in the case that $Z$ is itself connected, as
we now see.
\begin{prop}
\label{prop:produto-por-centro} Let $G$ be a complex connected reductive
group with center $Z$, and let $\rho:\pi_{1}\to G$ and $\sigma,\,\nu:\pi_{1}\to Z$
be representations. If there is an isomorphism $E_{\sigma}\cong E_{\nu}$
of $Z$-bundles, then the representations $\sigma\rho,\,\nu\rho\in\hom(\pi_{1},G)$,
give isomorphic $G$-bundles $E_{\sigma\rho}\cong E_{\nu\rho}$. \end{prop}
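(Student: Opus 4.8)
The plan is to reduce everything to the analytic-equivalence criterion of Theorem \ref{analytic equivalence G bundles}, specifically the equivalence $(1)\Leftrightarrow(2)$. That equivalence holds verbatim for bundles with any complex Lie structure group, since its proof uses only that $Y$ is simply connected (so the pullbacks $p^{*}E_{\sigma}$, $p^{*}E_{\nu}$ are holomorphically trivial) together with the description of bundle isomorphisms as holomorphic gauge transformations valued in the structure group. A preliminary observation is that $\sigma\rho$ and $\nu\rho$, defined by $\gamma\mapsto\sigma(\gamma)\rho(\gamma)$ and $\gamma\mapsto\nu(\gamma)\rho(\gamma)$, really are homomorphisms $\pi_{1}\to G$: this holds because $\sigma(\gamma),\nu(\gamma)\in Z$ commute with all values of $\rho$.

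First I would apply criterion $(2)$ of Theorem \ref{analytic equivalence G bundles} to the given isomorphism $E_{\sigma}\cong E_{\nu}$ of $Z$-bundles. Because $\sigma$ and $\nu$ take values in $Z$, this produces a holomorphic gauge transformation valued in $Z$, that is, a holomorphic map $h:Y\to Z$ satisfying
\[
h(\gamma\cdot y)=\nu(\gamma)\,h(y)\,\sigma(\gamma)^{-1},\qquad\forall\,\gamma\in\pi_{1},\ y\in Y.
\]
The key point here is that $h$ takes values in the center $Z$, not merely in $G$; this is guaranteed precisely because we apply the criterion at the level of $Z$-bundles rather than after extending the structure group to $G$.

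Next I would feed the very same map $h$, now regarded as a holomorphic map $Y\to Z\subset G$, into criterion $(2)$ of Theorem \ref{analytic equivalence G bundles} for the group $G$ as a candidate witnessing $E_{\sigma\rho}\cong E_{\nu\rho}$. It suffices to verify
\[
h(\gamma\cdot y)=(\nu\rho)(\gamma)\,h(y)\,(\sigma\rho)(\gamma)^{-1}=\nu(\gamma)\rho(\gamma)\,h(y)\,\rho(\gamma)^{-1}\sigma(\gamma)^{-1}.
\]
Since $h(y)\in Z$ commutes with $\rho(\gamma)$, we have $\rho(\gamma)\,h(y)\,\rho(\gamma)^{-1}=h(y)$, so the right-hand side collapses to $\nu(\gamma)\,h(y)\,\sigma(\gamma)^{-1}$, which is exactly the relation for $h$ established in the previous step. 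Hence $h$ satisfies the required identity, and criterion $(2)$ yields the desired isomorphism $E_{\sigma\rho}\cong E_{\nu\rho}$.

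The argument is essentially immediate once the gauge transformation is known to be $Z$-valued; the only genuine subtlety to watch for is this centrality, which is what allows the single function $h$ to commute past $\rho$ and serve simultaneously as the $Z$-bundle isomorphism and as the twisted $G$-bundle isomorphism.
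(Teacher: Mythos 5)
Your proof is correct and takes essentially the same route as the paper: both apply criterion (2) of Theorem \ref{analytic equivalence G bundles} to the $Z$-bundle isomorphism $E_{\sigma}\cong E_{\nu}$ to obtain a holomorphic $Z$-valued map $h$ satisfying $h(\gamma\cdot y)=\nu(\gamma)h(y)\sigma(\gamma)^{-1}$, and then use the fact that $h$ and $\sigma,\nu$ take values in the center to conclude that the same $h$ witnesses the analytic equivalence of $\sigma\rho$ and $\nu\rho$, hence $E_{\sigma\rho}\cong E_{\nu\rho}$. Your explicit remarks --- that the criterion applies at the level of $Z$-bundles and that the resulting $h$ is $Z$-valued (which is what lets it commute past $\rho(\gamma)$) --- simply make precise what the paper's proof uses implicitly.
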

\begin{proof}
By Theorem \ref{analytic equivalence G bundles}, there exists a holomorphic
function $h:Y\rightarrow Z$ such that $\nu(\gamma)h(y)=h(\gamma\cdot y)\sigma(\gamma)$,
for every $\gamma\in\pi_{1}$, $y\in Y$. Considering this equation
in $G$, and since $\nu,\,\sigma$ are in the center of $G$, we can
multiply by $\rho(\gamma)$, obtaining: 
\[
\nu(\gamma)\rho(\gamma)h(y)=h(\gamma\cdot y)\sigma(\gamma)\rho(\gamma),\quad\quad\forall\gamma\in\pi_{1},\,y\in Y.
\]
Thus, $\nu\rho:\pi_{1}\to G$ is analytically equivalent to the Schottky
representation $\sigma\rho:\pi_{1}\to G$. So again by Theorem \ref{analytic equivalence G bundles},
$E_{\sigma\rho}\cong E_{\nu\rho}$. \end{proof}
\begin{prop}
\label{connected-center} Suppose that $Z$ is connected. Then $E$
is a $G$-Schottky bundle if and only if it is a strict $G$-Schottky
bundle. \end{prop}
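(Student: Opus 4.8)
The plan is to prove the nontrivial implication, since the converse is immediate: a strict Schottky representation has $\rho(\alpha_i)=e\in Z$, hence is Schottky, so every strict Schottky bundle is a Schottky bundle. For the other direction, suppose $E\cong E_\rho$ for a Schottky representation $\rho$, and write $z_i:=\rho(\alpha_i)\in Z$ and $b_i:=\rho(\beta_i)\in G$. The idea is to split off the central monodromy along the $\alpha_i$ as a separate factor and then absorb it via analytic equivalence. Concretely, I would seek a central representation $\sigma:\pi_1\to Z$ and a strict Schottky representation $\rho_0:\pi_1\to G$ with $\rho=\sigma\rho_0$. Imposing $\rho_0(\alpha_i)=e$ forces $\sigma(\alpha_i)=z_i$, while $\sigma(\beta_i)=:w_i\in Z$ may still be chosen freely; setting $\rho_0(\beta_i):=w_i^{-1}b_i$ then makes $\rho_0$ a well-defined strict Schottky representation (the surface relation is automatic once $\rho_0(\alpha_i)=e$), and $\sigma\rho_0=\rho$ since the $w_i$ are central. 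As $\sigma$ takes values in $Z$, the product $\sigma\rho_0$ is again a homomorphism.

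By Proposition \ref{prop:produto-por-centro}, applied with $G$-valued factor $\rho_0$ and central factors $\sigma$ and the trivial representation $\nu\equiv e$, it suffices to choose the $w_i$ so that the flat $Z$-bundle $E_\sigma$ is holomorphically trivial, i.e. $E_\sigma\cong E_\nu$. Indeed this yields $E_\rho=E_{\sigma\rho_0}\cong E_{\nu\rho_0}=E_{\rho_0}$, exhibiting $E$ as strict Schottky. This is exactly where connectedness of $Z$ enters: a connected reductive abelian group is an algebraic torus, $Z\cong(\mathbb{C}^*)^m$, so a flat $Z$-bundle is a product of flat $\mathbb{C}^*$-bundles and it is enough to treat each coordinate, i.e. to reduce to the case $Z=\mathbb{C}^*$.

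For $Z=\mathbb{C}^*$, I would use the description of holomorphically trivial flat line bundles furnished by Theorem \ref{analytic equivalence G bundles}(2): writing a nowhere-zero equivariant section as $e^u$ on the universal cover $Y$, a character $\chi:\pi_1\to\mathbb{C}^*$ defines a holomorphically trivial bundle precisely when $\chi(\gamma)=\exp(\int_\gamma\omega)$ for some holomorphic $1$-form $\omega\in H^0(X,\Omega_X^1)$, where $\int_\gamma\omega$ denotes the period over the loop $\gamma$. The key classical input is that, for the canonical homology basis $\{\alpha_i,\beta_i\}$ attached to our presentation, the $A$-period map $\omega\mapsto(\int_{\alpha_i}\omega)_{i=1}^g$ is an isomorphism $H^0(X,\Omega_X^1)\to\mathbb{C}^g$ (Riemann's normalization of differentials). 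Hence, choosing any $c_i\in\mathbb{C}$ with $e^{c_i}=z_i$ — possible since $\exp:\mathbb{C}\to\mathbb{C}^*$ is onto — there is a (unique) $\omega$ with $\int_{\alpha_i}\omega=c_i$; the resulting character $\chi(\gamma)=\exp(\int_\gamma\omega)$ satisfies $\chi(\alpha_i)=z_i$ and defines a holomorphically trivial bundle, and one simply reads off $w_i:=\chi(\beta_i)=\exp(\int_{\beta_i}\omega)$. Carrying this out coordinatewise on $Z\cong(\mathbb{C}^*)^m$ produces the required $\sigma$ with $E_\sigma$ trivial, and the argument closes.

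The only real obstacle is this last step — arranging the central twist $E_\sigma$ to be holomorphically trivial while its $\alpha$-values are prescribed to be the given $z_i$ — and it is resolved precisely by the invertibility of the $A$-period matrix together with the surjectivity of $\exp$ onto the connected torus $Z$. I would emphasize that the argument is uniform in the genus $g\ge 1$, since the $A$-period map is already an isomorphism for elliptic curves. (Equivalently, one could invoke the fact, for $G=(\mathbb{C}^*)^m$, that flat bundles coincide with strict Schottky bundles, applied to the central factor.)
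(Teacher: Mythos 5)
Your proof is correct, and at the decisive step it takes a genuinely different route from the paper. The paper decomposes $\rho=\nu\tilde\rho$ using $G=Z\cdot DG$ (with $\tilde\rho(\beta_i)\in DG$) and then invokes Proposition \ref{prop: C star} --- a forward reference to Section 9.1, which in turn rests on the theorem of \cite{Florentino} that degree-zero line bundles are Schottky --- to replace the central Schottky factor $\nu$ by an analytically equivalent \emph{strict} central representation $\sigma$; the conclusion then follows from Proposition \ref{prop:produto-por-centro} exactly as in your argument. You instead split $\rho=\sigma\rho_0$ with $\rho_0$ valued in all of $G$ (the $DG$-factor in the paper is inessential, and your observation that the surface relation is automatic once $\rho_0(\alpha_i)=e$ is the correct justification), and you prove the needed central statement from scratch: via Theorem \ref{analytic equivalence G bundles} and the nonsingularity of the $A$-period matrix for a canonical homology basis, you build a character $\sigma:\pi_1\to Z\cong(\mathbb{C}^*)^m$ with prescribed $\sigma(\alpha_i)=z_i$ whose bundle $E_\sigma$ is holomorphically \emph{trivial}, absorbing $\sigma(\beta_i)$ into the strict factor. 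This is slightly stronger than what the paper arranges (the paper only makes the central bundle strict Schottky, not trivial), and it buys three things: the argument is self-contained (no forward reference to Section 9 and no appeal to \cite{Florentino}), it isolates precisely where connectedness of $Z$ enters (surjectivity of $\exp$ onto the torus together with invertibility of the $A$-period map), and it is uniform in genus $g\geq1$. The trade-off is that you essentially re-prove the line-bundle case of the Schottky property, which the paper gets for free by quotation.
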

\begin{proof}
A strict Schottky bundle is trivially a Schottky bundle. So, let $E=E_{\text{\ensuremath{\rho}}}$
be a Schottky $G$-bundle, with $\rho:\pi_{1}\to G$ a Schottky representation
and, using Theorem \ref{analytic equivalence G bundles}, we look
for a strict Schottky representation analytically equivalent to $\rho$.

Let $DG$ be the derived group of $G$. In terms of the well-known
decomposition $G=Z\cdot DG$, and our usual generators, we can write
$\rho(\alpha_{i})=\nu(\alpha_{i})\tilde{\rho}(\alpha_{i})$ and $\rho(\beta_{i})=\nu(\beta_{i})\tilde{\rho}(\beta_{i})$
for every $i=1,\cdots,g$, for some $\nu(\alpha_{i}),\nu(\beta_{i})\in Z$,
with $\tilde{\rho}(\beta_{i})\in DG$ and $\tilde{\rho}(\alpha_{i})=e$.
This assignment defines representations $\nu:\pi_{1}\to Z$ and $\tilde{\rho}:\pi_{1}\to DG$
satisfying $\rho(\gamma)=\nu(\gamma)\tilde{\rho}(\gamma)$ for all
$\gamma\in\pi_{1}$.

The representation $\nu$ defines a Schottky $Z$-bundle, $E_{\nu}$.
As $Z$ is connected, by Proposition \ref{prop: C star} there is
an isomorphism of $Z$-bundles $E_{\nu}\cong E_{\sigma}$, where $E_{\sigma}$
is the $Z$-bundle associated to a strict Schottky representation
$\sigma:\pi_{1}\to Z$, (so that $\sigma(\alpha_{i})=e$). By Proposition
\ref{prop:produto-por-centro}, $E_{\rho}=E_{\nu\tilde{\rho}}\cong E_{\sigma\tilde{\rho}}$.
Since $\sigma\tilde{\rho}:\pi_{1}\to G$ is a strict Schottky representation,
we are done. \end{proof}
\begin{example}
\label{exa:SL_n}Since $\mathbb{C}^{*}$, the center of $GL_{n}\mathbb{C}$,
is connected, every Schottky $GL_{n}\mathbb{C}$-bundle is strict
Schottky. On the other hand, for vector bundles with trivial determinant,
corresponding to $G=SL_{n}\mathbb{C}$, because $Z=\mathbb{Z}_{n}$,
our definition of Schottky bundles is more general than the one used
in \cite{Florentino}. 
\end{example}
For later use, we now provide another description of the fiber of
the uniformization map. 
\begin{defn}
\label{def Qrho}Given a representation $\rho\in\Hom\left(\pi_{1},\,G\right)$,
we define the following map, called the \emph{orbit map} 
\begin{eqnarray*}
Q_{\rho}:H^{0}(X,\Ad E_{\rho}\otimes\Omega_{X}^{1}) & \to & \mathbb{B}\\
\omega & \mapsto & Q_{\rho}\left(\omega\right):=\left[\sigma\right],
\end{eqnarray*}
with $\sigma\in\Hom\left(\pi_{1},\,G\right)$ the representation given
by 
\[
\sigma\left(\gamma\right):=h_{\omega}\left(\gamma\cdot y\right)\rho\left(\gamma\right)h_{\omega}\left(y\right)^{-1},\quad\gamma\in\pi_{1},\:y\in Y.
\]
Here, $h_{\omega}$ is defined in Theorem \ref{analytic equivalence G bundles}
(3), whose proof readily shows the following. \end{defn}
\begin{lem}
\label{lem:ImQrho equal fibre of E} The fibre $\mathbf{E}^{-1}(\left[E_{\rho}\right])$
coincides with $Q_{\rho}\left(H^{0}\left(X,\Ad E_{\rho}\otimes\Omega_{X}^{1}\right)\right)$,
the image of the orbit map. In other words, $E_{\rho}\cong E_{\sigma}$
if and only if $[\sigma]\in Im\left(Q_{\rho}\right)$. 
\end{lem}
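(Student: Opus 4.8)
The plan is to show that the fibre $\mathbf{E}^{-1}([E_\rho])$ is exactly the set of classes $[\sigma]$ with $E_\sigma \cong E_\rho$, and then to match this set with the image of $Q_\rho$ by reading off the content of Theorem \ref{analytic equivalence G bundles}. By definition of the uniformization map $\mathbf{E}$, a class $[\sigma] \in \mathbb{B}$ lies in $\mathbf{E}^{-1}([E_\rho])$ precisely when $E_\sigma \cong E_\rho$, i.e. precisely when $\sigma$ and $\rho$ are analytically equivalent. So the whole task reduces to identifying the analytic equivalence class of $\rho$ with $Q_\rho\bigl(H^0(X, \Ad E_\rho \otimes \Omega_X^1)\bigr)$.

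First I would prove the inclusion $\Img(Q_\rho) \subseteq \mathbf{E}^{-1}([E_\rho])$. Take any $\omega \in H^0(X, \Ad E_\rho \otimes \Omega_X^1)$ and let $\sigma$ be the representation defining $Q_\rho(\omega) = [\sigma]$, namely $\sigma(\gamma) = h_\omega(\gamma\cdot y)\,\rho(\gamma)\,h_\omega(y)^{-1}$. This is exactly condition (3) of Theorem \ref{analytic equivalence G bundles} (with the roles of $\rho$ and $\sigma$ as written there), so by the equivalence $(3)\Rightarrow(1)$ we get $E_\sigma \cong E_\rho$, hence $[\sigma] \in \mathbf{E}^{-1}([E_\rho])$. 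One should check here that the map $\gamma \mapsto \sigma(\gamma)$ so defined is genuinely a homomorphism $\pi_1 \to G$ — this is the cocycle condition, and it follows from the functional equation $h_\omega(\gamma\cdot y) = $ (the transport of $h_\omega$) built into the construction of $h_\omega$ as the solution of $h^{-1}dh = \omega$; but since Definition \ref{def Qrho} already posits $\sigma \in \Hom(\pi_1,G)$, I may simply invoke it.

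Conversely, for $\mathbf{E}^{-1}([E_\rho]) \subseteq \Img(Q_\rho)$, take $[\sigma]$ with $E_\sigma \cong E_\rho$. By $(1)\Rightarrow(3)$ of Theorem \ref{analytic equivalence G bundles}, there exists $\omega \in H^0(X, \Ad E_\rho \otimes \Omega_X^1)$ (note the twisting bundle is $\Ad E_\rho$, matching the domain of $Q_\rho$, after possibly relabelling $\sigma \leftrightarrow \rho$ consistently) with $\sigma(\gamma) = h_\omega(\gamma\cdot y)\,\rho(\gamma)\,h_\omega(y)^{-1}$, which is precisely the statement that $[\sigma] = Q_\rho(\omega) \in \Img(Q_\rho)$. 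Combining the two inclusions gives the claimed equality, and the final ``in other words'' rephrasing is immediate since $E_\rho \cong E_\sigma$ is by definition equivalent to $[\sigma] \in \mathbf{E}^{-1}([E_\rho])$.

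The only genuine subtlety, and the step I would watch most carefully, is the bookkeeping of which representation carries the twist and in which direction the $h_\omega$-conjugation runs: Theorem \ref{analytic equivalence G bundles}(3) is stated with $\omega \in H^0(X, \Ad(E_\sigma)\otimes \Omega_X^1)$ and expresses $\sigma$ in terms of $\rho$, whereas Definition \ref{def Qrho} takes $\omega \in H^0(X, \Ad E_\rho \otimes \Omega_X^1)$ and produces $\sigma$ from $\rho$. Since $E_\rho \cong E_\sigma$ on the fibre, the adjoint bundles $\Ad E_\rho$ and $\Ad E_\sigma$ are isomorphic, so the two spaces of twisted holomorphic $1$-forms coincide and the apparent discrepancy is harmless; I would make this identification explicit so that the application of the theorem is literally valid. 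Everything else is a direct substitution of the defining formula of $Q_\rho$ into the conditions of Theorem \ref{analytic equivalence G bundles}, so no further computation is required — the lemma is essentially a restatement of that theorem packaged through the orbit map.
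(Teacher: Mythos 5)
Your proof is correct and takes essentially the same route as the paper: the paper offers no separate argument for Lemma \ref{lem:ImQrho equal fibre of E}, stating only that the proof of Theorem \ref{analytic equivalence G bundles} ``readily shows'' it, and your unpacking of the equivalence $(1)\Leftrightarrow(3)$ into the two inclusions $\Img(Q_\rho)\subseteq\mathbf{E}^{-1}([E_\rho])$ and $\mathbf{E}^{-1}([E_\rho])\subseteq\Img(Q_\rho)$ is exactly that intended argument made explicit. Your care about whether the twisting bundle is $\Ad E_\sigma$ or $\Ad E_\rho$ (and the direction of the $h_\omega$-conjugation) flags a genuine notational looseness that the paper glosses over, and resolving it via the isomorphism $\Ad E_\sigma\cong\Ad E_\rho$ on the fibre is a correct clarification rather than a departure.
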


\subsection{Tangent spaces and group cohomology\label{chap:Tangent-spaces}}

We now describe the tangent space of $\mathbb{B}=\Hom\left(\pi_{1},\:G\right)\quot G$,
at a good representation, in terms of the group cohomology of $\pi_{1}$.

More generally, let $\Gamma$ denote a finitely generated group and
fix $\rho\in\Hom\left(\Gamma,G\right)$. The adjoint representation
on the Lie algebra of $G$, $\mathfrak{g}=\mathsf{Lie}(G)$, composed
with $\rho$, that is 
\begin{equation}
\Ad_{\rho}:\Gamma\rightarrow G\to GL\left(\mathfrak{g}\right),\label{eq:ad-module}
\end{equation}
induces on $\mathfrak{g}$ a $\Gamma$-module structure, which we
denote by $\mathfrak{g}_{\mathrm{Ad}_{\rho}}$. The cohomology groups
of $\Gamma$ with coefficients in $\mathfrak{g}_{\Ad_{\rho}}$, are
explicitly given by: 
\[
H^{0}\left(\Gamma,\mathfrak{g}_{\Ad_{\rho}}\right):=Z^{0}\left(\Gamma,\mathfrak{g}_{\Ad_{\rho}}\right)=\left(\mathfrak{g}_{\Ad_{\rho}}\right)^{\Gamma}\quad\quad\mbox{(\ensuremath{\Gamma} invariants in \ensuremath{\mathfrak{g}_{\Ad_{\rho}}})},
\]
\[
H^{1}\left(\Gamma,\mathfrak{g}_{\Ad_{\rho}}\right):=Z^{1}\left(\Gamma,\mathfrak{g}_{\Ad_{\rho}}\right)/B^{1}\left(\Gamma,\mathfrak{g}_{\Ad_{\rho}}\right)
\]
where (see, e.g., \cite{Brown}) 
\begin{eqnarray*}
Z^{1}\left(\Gamma,\mathfrak{g}_{\Ad_{\rho}}\right) & := & \left\{ \left.\phi:\Gamma\to\mathfrak{g}\,\right|\phi(\gamma_{0}\gamma_{1})=\phi(\gamma_{0})+\Ad_{\rho}(\gamma_{0})\cdot\phi(\gamma_{1})\,,~\forall\gamma_{0},\gamma_{1}\in\Gamma\right\} ,\\
B^{1}\left(\Gamma,\mathfrak{g}_{\Ad_{\rho}}\right) & := & \left\{ \left.\phi:\Gamma\to\mathfrak{g}\,\right|\exists a\in\mathfrak{g},~~\phi(\gamma_{0})=\Ad_{\rho}(\gamma_{0})\cdot a-a\,,~\forall\gamma_{0}\in\Gamma\right\} .
\end{eqnarray*}
Let us recall the isomorphism between the Zariski tangent space of
the character variety at a good representation $\rho$, and the first
cohomology group $H^{1}\left(\Gamma,\mathfrak{g}_{\mathrm{Ad}_{\rho}}\right)$.
The following result was proved by Goldman \cite{GoldmanSymNatureFundGrp},
Martin \cite{Martin1} (generalizing the case of $G=GL_{n}\mathbb{C}$
proved by Weil \cite{Weil}) and Lubotzky and Magid \cite{LubotzkyMagid},
see also \cite{Sikora}.
\begin{thm}
\label{thm:tangent-space-group-cohom} For a good representation $\rho\in\Hom\left(\Gamma,G\right)$
we have, 
\[
T_{\left[\rho\right]}\left(\mathrm{Hom}\left(\Gamma,G\right)\quot G\right)\cong H^{1}\left(\Gamma,\mathfrak{g}_{\mathrm{Ad}_{\rho}}\right).
\]

\end{thm}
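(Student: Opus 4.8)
The plan is to identify the two group-cohomology spaces with honest tangent spaces and then quotient. Concretely, I would first realize $\Hom(\Gamma,G)$ as a closed subvariety of $G^{n}$ by choosing a finite presentation of $\Gamma$, and compute the Zariski tangent space at $\rho$ by infinitesimal deformation. Writing a first-order family as $\rho_\epsilon(\gamma)=\exp(\epsilon\,u(\gamma))\,\rho(\gamma)$ with $u\colon\Gamma\to\mathfrak{g}$, the condition that each $\rho_\epsilon$ be a homomorphism becomes, to first order in $\epsilon$, exactly the cocycle identity $u(\gamma_0\gamma_1)=u(\gamma_0)+\Ad_\rho(\gamma_0)\cdot u(\gamma_1)$. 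This gives a canonical identification $T_\rho\Hom(\Gamma,G)\cong Z^1(\Gamma,\mathfrak{g}_{\Ad_\rho})$ of the Zariski tangent space with the space of $1$-cocycles.

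Next I would identify the tangent space to the conjugation orbit. Differentiating the orbit map $g\mapsto g\rho g^{-1}$ at the identity, the one-parameter family $\exp(\epsilon a)\,\rho(\gamma)\,\exp(-\epsilon a)$, with $a\in\mathfrak{g}$, corresponds to the cocycle $\gamma\mapsto a-\Ad_\rho(\gamma)\cdot a$, which is precisely a coboundary. Conversely every element of $B^1(\Gamma,\mathfrak{g}_{\Ad_\rho})$ arises this way, so $T_\rho(G\cdot\rho)\cong B^1(\Gamma,\mathfrak{g}_{\Ad_\rho})$.

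Finally I would pass to the GIT quotient using goodness. By Theorem \ref{thm:good rep for g2}, a good representation is a smooth point of $\Hom(\Gamma,G)$ and of the geometric quotient $\Hom(\Gamma,G)\quot G$; moreover $Z(\rho)=Z$ means the orbit $G\cdot\rho$ is closed of dimension $\dim G-\dim Z$ and that $G/Z$ acts with trivial infinitesimal stabilizer near $\rho$. A Luna-type slice transverse to the orbit then identifies a neighbourhood of $[\rho]$ with the normal space to the orbit, so taking tangent spaces yields $T_{[\rho]}(\Hom(\Gamma,G)\quot G)\cong T_\rho\Hom(\Gamma,G)/T_\rho(G\cdot\rho)\cong Z^1/B^1=H^1(\Gamma,\mathfrak{g}_{\Ad_\rho})$, as claimed.

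I expect the delicate point to be this last step rather than the cocycle bookkeeping: one must know that $\Hom(\Gamma,G)$ is reduced and smooth at $\rho$, so that the Zariski tangent space $Z^1$ is the genuine tangent space (this can fail at reducible representations, where $H^2$ obstructions appear), and that forming the quotient commutes with taking tangent spaces. Both rest on the hypothesis that $\rho$ is good --- irreducible with stabilizer exactly $Z$ --- which makes $\rho$ a stable point admitting a clean locally free action of $G/Z$. Since this is the classical Weil--Goldman--Martin--Lubotzky--Magid computation, I would treat the cocycle identifications carefully so that the isomorphism is manifestly canonical, and cite those sources for the slice-theoretic input.
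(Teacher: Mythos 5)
The paper offers no internal proof of this theorem: it is quoted from Weil, Goldman, Martin and Lubotzky--Magid (with Sikora as a further reference), and your sketch is exactly the classical argument behind those citations. Your three identifications are the right ones: first-order deformations $\rho_{\epsilon}(\gamma)=\exp(\epsilon\,u(\gamma))\rho(\gamma)$ give $Z^{1}\left(\Gamma,\mathfrak{g}_{\Ad_{\rho}}\right)$ as the tangent space to the representation scheme, the conjugation orbit differentiates to $B^{1}\left(\Gamma,\mathfrak{g}_{\Ad_{\rho}}\right)$, and a Luna slice at the closed orbit $G\cdot\rho$ (on which the stabilizer $Z(\rho)=Z$ acts trivially, so the quotient is locally the slice itself) identifies $T_{[\rho]}\left(\Hom(\Gamma,G)\quot G\right)$ with the quotient of the two. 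So there is nothing in the paper to diverge from; you have reconstructed the proof that the cited sources give.

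The one genuine wrinkle is your justification of smoothness. You appeal to Theorem \ref{thm:good rep for g2} to claim that goodness makes $\rho$ a smooth point of $\Hom(\Gamma,G)$; but that theorem (i) asserts smoothness of $[\rho]$ in the \emph{quotient}, not of $\rho$ in $\Hom(\Gamma,G)$, and (ii) rests on \cite[Corollary 50]{Sikora}, which is specific to surface groups and free groups --- which are, not coincidentally, the only groups $\Gamma$ to which the paper ever applies the theorem. For an arbitrary finitely generated $\Gamma$ (the generality in which the statement is phrased), goodness does \emph{not} force $\Hom(\Gamma,G)$ to be reduced or smooth at $\rho$: character schemes of $3$-manifold groups, for instance, can be nonreduced at \emph{irreducible} representations (examples of Kapovich and Millson), so your parenthetical that $H^{2}$-obstructions only appear at reducible representations is too optimistic, and the equality $T_{\rho}\Hom(\Gamma,G)=Z^{1}\left(\Gamma,\mathfrak{g}_{\Ad_{\rho}}\right)$ --- hence the stated isomorphism for the reduced variety --- can fail; for general $\Gamma$ one must either read $T_{[\rho]}$ scheme-theoretically (as Lubotzky--Magid do) or add a reducedness hypothesis. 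In the two cases the paper actually needs, your argument closes cleanly: $\Hom(F_{g},G)\cong G^{g}$ is smooth everywhere, and for a surface group at good $\rho$ the relation map $\mu:G^{2g}\to DG$ (commutators land in the derived group) is submersive, so $\Hom(\pi_{1},G)$ is smooth at $\rho$ of dimension $\dim Z^{1}=(2g-1)\dim G+\dim Z$, matching Martin's count quoted before Proposition \ref{prop:dim and iso representations}; with that in hand your slice step, and the theorem, follow.
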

The identification between tangent spaces to character varieties and
group cohomology spaces is very useful in many situations. In particular,
we can use it to compute the dimension of the complex manifolds $\mathbb{B}^{\mathsf{gd}}=\Hom\left(\pi_{1},\:G\right)^{\mathsf{gd}}\quot G$
and $\mathbb{S}^{\mathsf{gd}}\subset\mathbb{B}^{\mathsf{gd}}$, consisting
of classes of good representations, when $\Gamma$ is the fundamental
group $\pi_{1}$ of a surface of genus $g$. In fact, by \cite[Lemma 6.2]{Martin1},
we have, for $\rho\in\mathbb{B}^{\mathsf{gd}}$: 
\[
\begin{array}{c}
\dim Z^{1}\left(\pi_{1},\mathfrak{g}_{\Ad_{\rho}}\right)=\left(2g-1\right)\dim G+\dim Z,\\
\dim B^{1}\left(\pi_{1},\mathfrak{g}_{\Ad_{\rho}}\right)=\dim G-\dim Z,
\end{array}
\]
and also the following. 
\begin{prop}
\cite{Martin1}\label{prop:dim and iso representations} If $[\rho]\in\mathbb{B}^{\mathsf{gd}}$,
then 
\[
T_{\left[\rho\right]}\mathbb{B}\cong H^{1}\left(\pi_{1},\mathfrak{g}_{\Ad_{\rho}}\right)
\]
and $\dim T_{\left[\rho\right]}\mathbb{B}=\left(2g-2\right)\dim G+2\dim Z.$ 
\end{prop}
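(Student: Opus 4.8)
The plan is to combine the abstract isomorphism of Theorem \ref{thm:tangent-space-group-cohom} with the explicit dimension counts quoted immediately above the statement. For the isomorphism, I would simply specialize Theorem \ref{thm:tangent-space-group-cohom} to the case $\Gamma=\pi_1$, the surface group in the presentation \eqref{eq:fund-group}: since $\rho$ is good, that theorem identifies the Zariski tangent space $T_{[\rho]}\left(\Hom(\pi_1,G)\quot G\right)=T_{[\rho]}\mathbb{B}$ with $H^1(\pi_1,\mathfrak{g}_{\Ad_\rho})$. This is the whole content of the first assertion; no further argument is needed beyond checking that $\pi_1$ is finitely generated, which is clear.

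For the dimension, I would invoke the defining short exact sequence $0\to B^1\to Z^1\to H^1\to 0$, which gives $\dim H^1=\dim Z^1-\dim B^1$. Substituting the two formulas from \cite[Lemma 6.2]{Martin1} quoted above, namely $\dim Z^1(\pi_1,\mathfrak{g}_{\Ad_\rho})=(2g-1)\dim G+\dim Z$ and $\dim B^1(\pi_1,\mathfrak{g}_{\Ad_\rho})=\dim G-\dim Z$, yields
\[
\dim T_{[\rho]}\mathbb{B}=\big((2g-1)\dim G+\dim Z\big)-\big(\dim G-\dim Z\big)=(2g-2)\dim G+2\dim Z,
\]
as claimed. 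Thus, granting the two quoted dimension counts, the proposition is pure bookkeeping.

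The genuine content (and the only possible obstacle) lies in those two dimension formulas, which I am taking as given. I would at least flag where the \emph{goodness} of $\rho$ enters, since this is what makes them hold: goodness forces $H^0(\pi_1,\mathfrak{g}_{\Ad_\rho})=\mathfrak{g}^{\pi_1}=\mathsf{Lie}(Z)$, so that $\dim H^0=\dim Z$. This single fact underlies both formulas. It gives $\dim B^1=\dim\mathfrak{g}-\dim H^0=\dim G-\dim Z$, because the coboundary map $a\mapsto(\gamma\mapsto\Ad_\rho(\gamma)\cdot a-a)$ has kernel exactly $H^0$; and it controls the rank of the constraint imposed by the single surface relation $\prod_i\alpha_i\beta_i\alpha_i^{-1}\beta_i^{-1}=1$, cutting $\dim Z^1$ down from the free value $2g\dim G$ by precisely $\dim G-\dim Z$. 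Making this dependence on goodness explicit is the only subtle point; everything else is arithmetic.
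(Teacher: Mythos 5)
Your proposal is correct and follows essentially the same route as the paper, which likewise obtains the isomorphism by specializing Theorem \ref{thm:tangent-space-group-cohom} to $\Gamma=\pi_{1}$ and deduces the dimension by subtracting the two formulas $\dim Z^{1}=(2g-1)\dim G+\dim Z$ and $\dim B^{1}=\dim G-\dim Z$ quoted from \cite[Lemma 6.2]{Martin1}. Your added remark correctly locates where goodness enters (namely $\dim H^{0}(\pi_{1},\mathfrak{g}_{\Ad_{\rho}})=\dim\mathfrak{z}=\dim Z$, which fixes $\dim B^{1}$ and, via duality, the rank of the relation constraint), so nothing further is needed.
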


\subsection{The period map}

As in Theorem \ref{analytic equivalence G bundles}, seing holomorphic
sections $\omega\in H^{0}(X,\Ad E_{\rho}\otimes\Omega_{X}^{1})$ as
1-forms on the universal cover $Y$, we can integrate them along paths
to obtain elements in group cohomology of $\pi_{1}$. This defines
the period map.

Fix $y\in Y$ and $\omega\in H^{0}(X,\Ad E_{\rho}\otimes\Omega_{X}^{1})$.
Let us denote by $\phi_{y}^{\omega}$ the map: 
\[
\begin{array}{ccc}
\phi_{y}^{\omega}:\,\pi_{1} & \rightarrow & \mathfrak{g}\\
\gamma & \mapsto & \phi_{y}^{\omega}(\gamma):=\int_{y}^{\gamma\cdot y}\omega,
\end{array}
\]
where we denote also by $\omega$ its pullback to $Y$. In fact, $\phi_{y}^{\omega}$
is a cocycle in $Z^{1}(\pi_{1},\mathfrak{g}_{\Ad_{\rho}})$, and its
cohomology class only depends on $\omega$ (and not on the basepoint
$y\in Y$). 
\begin{prop}
\label{prop:Period map-1} Fix a representation $\rho:\pi_{1}\to G$,
and $y\in Y$. Then, for every $\omega$, $\phi_{y}^{\omega}\in Z^{1}(\pi_{1},\mathfrak{g}_{\Ad_{\rho}})$.
Moreover, the assignement 
\begin{eqnarray*}
P_{\Ad_{\rho}}:H^{0}\left(X,\Ad E_{\rho}\otimes\Omega_{X}^{1}\right) & \rightarrow & H^{1}\left(\pi_{1},\mathfrak{g}_{\Ad_{\rho}}\right)\\
\omega & \mapsto & [\phi_{y}^{\omega}],
\end{eqnarray*}
is a well defined linear map between finite dimensional $\mathbb{C}$-vector
spaces, and is independent of $y\in Y$. \end{prop}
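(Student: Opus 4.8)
The plan is to push everything down to the universal cover and use two facts about the pullback $\tilde\omega$ of $\omega$ to $Y$. First, $\tilde\omega$ is a \emph{closed} $\mathfrak{g}$-valued $1$-form: being holomorphic of type $(1,0)$ on a Riemann surface it is automatically closed, so that on the simply connected $Y$ the line integral $\int_y^{\gamma\cdot y}\tilde\omega$ depends only on the endpoints; this already shows that $\phi_y^\omega$ is a well-defined function on $\pi_{1}$. Second, I would invoke the equivariance law already extracted in the proof of Theorem \ref{analytic equivalence G bundles}: a holomorphic section of $\Ad E_\rho\otimes\Omega_X^1$ pulls back to a form obeying $\gamma^*\tilde\omega=\Ad_\rho(\gamma)\cdot\tilde\omega$ for every deck transformation $\gamma\in\pi_{1}$ (with the left action fixed in \eqref{eq:E_rho}). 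These two ingredients do all the work.

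For the cocycle identity I would split the defining path. Using path-independence,
\[
\phi_y^\omega(\gamma_0\gamma_1)=\int_y^{\gamma_0\gamma_1\cdot y}\tilde\omega=\int_y^{\gamma_0\cdot y}\tilde\omega+\int_{\gamma_0\cdot y}^{\gamma_0\gamma_1\cdot y}\tilde\omega,
\]
where the first term is $\phi_y^\omega(\gamma_0)$. In the second term the path from $\gamma_0\cdot y$ to $\gamma_0\gamma_1\cdot y$ is the image under $\gamma_0$ of a path from $y$ to $\gamma_1\cdot y$, so a change of variables combined with the equivariance law gives
\[
\int_{\gamma_0\cdot y}^{\gamma_0\gamma_1\cdot y}\tilde\omega=\int_y^{\gamma_1\cdot y}\gamma_0^*\tilde\omega=\Ad_\rho(\gamma_0)\int_y^{\gamma_1\cdot y}\tilde\omega=\Ad_\rho(\gamma_0)\cdot\phi_y^\omega(\gamma_1).
\]
This yields precisely $\phi_y^\omega(\gamma_0\gamma_1)=\phi_y^\omega(\gamma_0)+\Ad_\rho(\gamma_0)\cdot\phi_y^\omega(\gamma_1)$, the defining relation of $Z^1(\pi_{1},\mathfrak{g}_{\Ad_\rho})$.

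Independence of the basepoint is the step where the cohomology class, rather than the cocycle, is genuinely needed. Setting $a:=\int_y^{y'}\tilde\omega\in\mathfrak{g}$ and splitting $\int_{y'}^{\gamma\cdot y'}=\int_{y'}^{y}+\int_y^{\gamma\cdot y}+\int_{\gamma\cdot y}^{\gamma\cdot y'}$, the same change of variables turns the last integral into $\Ad_\rho(\gamma)\,a$, so that $\phi_{y'}^\omega(\gamma)-\phi_y^\omega(\gamma)=\Ad_\rho(\gamma)\cdot a-a$, which is a coboundary in $B^1(\pi_{1},\mathfrak{g}_{\Ad_\rho})$; hence $[\phi_{y'}^\omega]=[\phi_y^\omega]$ and $P_{\Ad_\rho}$ is basepoint-independent. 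Linearity in $\omega$ is immediate from linearity of integration, and finite-dimensionality is clear: the source is the space of global sections of a holomorphic vector bundle over the compact $X$, and the target is finite-dimensional from the explicit descriptions of $Z^1$ and $B^1$, since $\pi_{1}$ is finitely generated and $\mathfrak{g}$ finite-dimensional. The only real obstacle is bookkeeping the direction of the deck action so that the equivariance appears with $\Ad_\rho(\gamma_0)$ and not its inverse; once the convention of \eqref{eq:E_rho} is respected, the rest is mechanical.
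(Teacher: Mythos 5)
Your proof is correct and follows essentially the same route as the paper: both establish the cocycle identity by splitting the path at $\gamma_0\cdot y$, changing variables, and invoking the equivariance law $\gamma^*\tilde\omega=\Ad_{\rho}(\gamma)\cdot\tilde\omega$ from Theorem \ref{analytic equivalence G bundles}, and both handle basepoint independence by exhibiting the difference as a coboundary. Your additional remarks (path-independence via closedness of holomorphic $1$-forms on the simply connected $Y$, and the linearity and finite-dimensionality observations) simply make explicit what the paper leaves implicit, including the coboundary computation that the paper only sketches as ``a similar computation.''
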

\begin{defn}
We call $P_{\Ad\rho}$, as defined above, the \emph{period map} associated
with $\rho$.\end{defn}
\begin{proof}
Using the action on 1-forms $\gamma\cdot\omega=\Ad_{\rho}(\gamma)\cdot\omega=(\omega\circ\gamma)\gamma'$,
$\gamma\in\pi_{1}$, as in Theorem \ref{analytic equivalence G bundles},
we compute, by linearity and change of variable:
\begin{eqnarray*}
\phi_{y}^{\omega}\left(\gamma_{1}\gamma_{2}\right) & = & {\textstyle \int_{y}^{\gamma_{1}\cdot y}\omega+\int_{\gamma_{1}\cdot y}^{\gamma_{1}\cdot(\gamma_{2}\cdot y)}\omega}\\
 & = & {\textstyle \phi_{y}^{\omega}(\gamma_{1})+\int_{y}^{\gamma_{2}\cdot y}(\omega\circ\gamma_{1})\,\gamma_{1}'}\\
 & = & \phi_{y}^{\omega}(\gamma_{1})+\gamma_{1}\cdot\phi_{y}^{\omega}(\gamma_{2}),
\end{eqnarray*}
which shows that $\phi_{y}^{\omega}$ is a cocycle in $Z^{1}\left(\pi_{1},\mathfrak{g}_{\Ad_{\rho}}\right)$.
The proof that $P_{\Ad_{\rho}}(\omega)$ is independent of the base
point follows a similar computation to conclude that $\phi_{y}^{\omega}-\phi_{y'}^{\omega}$
is $1$-coboundary, for another $y'\in Y$. 
\end{proof}
Recall that the orbit map 
\[
Q_{\rho}:\ H^{0}(X,\Ad E_{\rho}\otimes\Omega_{X}^{1})\to\mathbb{B},
\]
(see Definition \ref{def Qrho}) verifies $Q_{\rho}(0)=[\rho]$, and
its derivative at the identity, for a good representation $\rho$
is a map: 
\[
d_{0}Q_{\rho}:\,H^{0}(X,\Ad E_{\rho}\otimes\Omega_{X}^{1})\to T_{[\rho]}\mathbb{B}\cong H^{1}(\pi_{1},\,\mathfrak{g}_{\Ad_{\rho}}).
\]

\begin{lem}
\label{imdQequaldE}For a representation $\rho\in\Hom\left(\pi_{1},G\right)$,
the image of $d_{0}Q_{\rho}$ coincides with the kernel of the map
$d_{[\rho]}\mathbf{E}$, the derivative of $\mathbf{E}$ at $[\rho]$. \end{lem}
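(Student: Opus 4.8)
The plan is to identify both the image of $d_0 Q_\rho$ and the kernel of $d_{[\rho]}\mathbf{E}$ with the \emph{same} subspace of $H^1(\pi_1, \mathfrak{g}_{\Ad_\rho})$, namely the image of the period map $P_{\Ad_\rho}$ from Proposition \ref{prop:Period map-1}. The conceptual heart of the argument is that analytic equivalence is exactly the relation "$\sigma$ lies in the image of the orbit map $Q_\rho$" (Lemma \ref{lem:ImQrho equal fibre of E}), and that $\mathbf{E}$ collapses precisely the analytic equivalence classes. Differentiating these two facts at $\rho$ should match the image of $d_0 Q_\rho$ with the kernel of $d_{[\rho]}\mathbf{E}$.

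**First I would** show that $\Img(d_0 Q_\rho) \subseteq \Ker(d_{[\rho]}\mathbf{E})$. By Lemma \ref{lem:ImQrho equal fibre of E}, the entire image $Q_\rho(H^0(X, \Ad E_\rho \otimes \Omega_X^1))$ is contained in the fibre $\mathbf{E}^{-1}([E_\rho])$; that is, $\mathbf{E} \circ Q_\rho$ is the constant map sending everything to $[E_\rho]$. Since $Q_\rho(0) = [\rho]$, differentiating the identity $\mathbf{E}(Q_\rho(\omega)) = [E_\rho]$ at $\omega = 0$ gives $d_{[\rho]}\mathbf{E} \circ d_0 Q_\rho = 0$, which yields the inclusion immediately. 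This direction is purely formal, a consequence of the chain rule applied to a constant map.

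**The main work** is the reverse inclusion $\Ker(d_{[\rho]}\mathbf{E}) \subseteq \Img(d_0 Q_\rho)$, and here I would pass through an explicit computation of $d_0 Q_\rho$ using Theorem \ref{analytic equivalence G bundles}(3). For $\omega \in H^0(X, \Ad E_\rho \otimes \Omega_X^1)$, the representation $\sigma = Q_\rho(\omega)$ is given by $\sigma(\gamma) = h_\omega(\gamma \cdot y)\,\rho(\gamma)\,h_\omega(y)^{-1}$, where $h_\omega^{-1}\,dh_\omega = \omega$ and $h_\omega(y_0) = e$. Linearizing in $\omega$ near $\omega = 0$ (where $h_\omega \to e$), one computes that the tangent vector $d_0 Q_\rho(\omega)$ is represented by the cocycle $\gamma \mapsto \int_y^{\gamma \cdot y}\omega$, which is exactly $\phi_y^\omega = P_{\Ad_\rho}(\omega)$. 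Thus $\Img(d_0 Q_\rho) = \Img(P_{\Ad_\rho})$, the period subspace. The hard part will be justifying this linearization cleanly: one must expand $h_\omega(y) \approx e + \int_{y_0}^y \omega + o(\omega)$, substitute into the formula for $\sigma(\gamma)$, and verify that the first-order term reduces — modulo $1$-coboundaries $B^1(\pi_1, \mathfrak{g}_{\Ad_\rho})$ — to the period integral; the terms involving $h_\omega(y)$ at the fixed basepoint contribute only a coboundary, which is why the cohomology class is basepoint-independent, consistent with Proposition \ref{prop:Period map-1}.

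**Finally**, to close the argument I would note that the same differentiation shows $\Ker(d_{[\rho]}\mathbf{E})$ is contained in the period subspace: any tangent direction annihilated by $d_{[\rho]}\mathbf{E}$ corresponds to a first-order deformation of $\rho$ keeping $E_\rho$ fixed up to isomorphism, hence by Theorem \ref{analytic equivalence G bundles} such a deformation is of the form $\sigma(\gamma) = h(\gamma \cdot y)\,\rho(\gamma)\,h(y)^{-1}$ for a one-parameter family of holomorphic $h$, and its derivative is a period cocycle. Combining both inclusions with the identification $\Img(d_0 Q_\rho) = \Img(P_{\Ad_\rho})$ gives the equality $\Img(d_0 Q_\rho) = \Ker(d_{[\rho]}\mathbf{E})$. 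The one subtlety to handle carefully is the role of the basepoint condition $h(y_0) = e$ and the reductive-group normalization, ensuring the linearized isomorphism genuinely lands in $H^1$ rather than merely in $Z^1$; this is where I expect to spend the most care, though it follows the template already established in the proof of Proposition \ref{prop:Period map-1}.
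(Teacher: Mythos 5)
Your proposal is correct and takes essentially the same route as the paper: the paper's proof (which defers to \cite[Lemma 4(a)]{Florentino}) likewise obtains $\mathrm{Im}\,d_{0}Q_{\rho}\subseteq\ker d_{[\rho]}\mathbf{E}$ by differentiating the constant map $\mathbf{E}\circ Q_{\rho}$ via Lemma \ref{lem:ImQrho equal fibre of E}, and the reverse inclusion by recognizing a kernel vector as tangent to the fibre $\mathbf{E}^{-1}\left(\left[E_{\rho}\right]\right)=\mathrm{Im}\,Q_{\rho}$, using the characterization of analytic equivalence in Theorem \ref{analytic equivalence G bundles}. The only difference is that you fold in the explicit linearization $d_{0}Q_{\rho}=P_{\Ad_{\rho}}$, which is not needed for this lemma and is established separately in the paper as Proposition \ref{PeriodMapEQdiFQrho}.
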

\begin{proof}
Using Theorem \ref{analytic equivalence G bundles}, and Lemma \ref{lem:ImQrho equal fibre of E},
the proof is analogous to the proof of \cite[Lemma 4(a)]{Florentino}. 
\end{proof}
For a good representation $\rho\in\Hom\left(\pi_{1},G\right)$, such
that $[\rho]\in\mathbf{E}^{-1}(\mathcal{M}_{G})$, we can form the
diagram 
\begin{equation}
\xymatrix{H^{0}\left(X,\Ad E_{\rho}\otimes\Omega_{X}^{1}\right)\ar@{->}[r]^{~~~~~~~~d_{0}Q_{\rho}}\ar@{->}[rd]_{~~~~~~~~P_{\Ad_{\rho}}} & T_{[\rho]}\mathbb{B}\ar@{->}[d]^{\cong}\ar@{->}[r]^{d_{[\rho]}\mathbf{E}} & T_{[E_{\rho}]}\mathcal{M}_{G}\\
 & H^{1}\left(\pi_{1},\mathfrak{g}_{\Ad_{\rho}}\right).
}
\label{eq:dQrho=00003D00003D00003DPAd}
\end{equation}
The next result shows that, in fact, the triangle above is commutative. 
\begin{prop}
\label{PeriodMapEQdiFQrho} For each good representation $\rho$ in
$\Hom\left(\pi_{1},G\right)$, the maps $d_{0}Q_{\rho}$ and $P_{\Ad_{\rho}}$,
coincide under the vertical isomorphism of diagram \eqref{eq:dQrho=00003D00003D00003DPAd}. \end{prop}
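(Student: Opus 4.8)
The plan is to compute $d_0Q_\rho$ explicitly at the level of cocycles and to check that it reproduces exactly the period cocycle $\phi_y^\omega$ of Proposition \ref{prop:Period map-1}. Recall first the convention behind Theorem \ref{thm:tangent-space-group-cohom}: a tangent vector to $\mathbb{B}$ at $[\rho]$ represented by a smooth path $t\mapsto[\sigma_t]$ with $\sigma_0=\rho$ corresponds, under the vertical isomorphism, to the class in $H^1(\pi_1,\mathfrak{g}_{\Ad_\rho})$ of the left‑trivialized derivative $u(\gamma):=\frac{d}{dt}\big|_{t=0}\sigma_t(\gamma)\,\rho(\gamma)^{-1}\in\mathfrak{g}$. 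Differentiating $\sigma_t(\gamma_1\gamma_2)=\sigma_t(\gamma_1)\sigma_t(\gamma_2)$ gives $u(\gamma_1\gamma_2)=u(\gamma_1)+\Ad_{\rho(\gamma_1)}u(\gamma_2)$, so indeed $u\in Z^1(\pi_1,\mathfrak{g}_{\Ad_\rho})$, matching the cocycle convention used in Section \ref{chap:Tangent-spaces}. Thus I must evaluate this $u$ along the specific path $\sigma_t$ representing $Q_\rho(t\omega)$ and compare it with $\phi_y^\omega$.

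First I would expand the solution $h_{t\omega}$ of the equation $h^{-1}dh=t\omega$, $h(y_0)=e$, from Theorem \ref{analytic equivalence G bundles}(3), to first order in $t$. Writing $h_{t\omega}=e+t\,k+O(t^2)$ and substituting into $dh=h\cdot(t\omega)$ yields $dk=\omega$ on the simply connected cover $Y$, with $k(y_0)=0$. Since a holomorphic $\mathfrak{g}$‑valued $1$‑form on a Riemann surface is automatically closed, the unique solution is $k(y)=\int_{y_0}^{y}\omega$, the very integral appearing in the definition of the period map. Hence $h_{t\omega}(y)=e+t\int_{y_0}^{y}\omega+O(t^2)$.

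Next I would choose the representative with $y=y_0$ in Definition \ref{def Qrho}, which is legitimate because the resulting class $[\sigma]\in\mathbb{B}$ is independent of this choice. Then $h_{t\omega}(y_0)=e$ gives $\sigma_t(\gamma)=h_{t\omega}(\gamma\cdot y_0)\,\rho(\gamma)$, and substituting the first‑order expansion and differentiating at $t=0$ produces
\[
u(\gamma)=\frac{d}{dt}\Big|_{t=0}\sigma_t(\gamma)\rho(\gamma)^{-1}=\frac{d}{dt}\Big|_{t=0}h_{t\omega}(\gamma\cdot y_0)=\int_{y_0}^{\gamma\cdot y_0}\omega=\phi_{y_0}^{\omega}(\gamma).
\]
So the cocycle representing $d_0Q_\rho(\omega)$ is precisely the period cocycle $\phi_{y_0}^\omega$, and the two maps agree in $H^1(\pi_1,\mathfrak{g}_{\Ad_\rho})$.

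The step needing the most care is matching the ODE analysis to the tangent‑space identification, together with the passage from $\Hom(\pi_1,G)$ to the quotient $\mathbb{B}$, which can only alter $u$ by a coboundary. For a general basepoint $y\neq y_0$ the same computation gives $u(\gamma)=\int_{y_0}^{\gamma\cdot y}\omega-\Ad_{\rho(\gamma)}\int_{y_0}^{y}\omega$, which differs from $\phi_y^\omega(\gamma)=\int_y^{\gamma\cdot y}\omega$ by the coboundary $\gamma\mapsto a-\Ad_{\rho(\gamma)}a$ with $a=\int_{y_0}^{y}\omega\in B^1(\pi_1,\mathfrak{g}_{\Ad_\rho})$; this reconfirms both the independence of $y$ asserted in Proposition \ref{prop:Period map-1} and the claimed equality of $d_0Q_\rho$ with $P_{\Ad_\rho}$, consistently with Lemma \ref{imdQequaldE}.
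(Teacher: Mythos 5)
Your proof is correct, but it takes a genuinely different route from the paper's. You compute $d_{0}Q_{\rho}$ directly at the cocycle level: expanding the solution of $h^{-1}dh=t\omega$, $h(y_{0})=e$, to first order in $t$ (using that a holomorphic $\mathfrak{g}$-valued $1$-form on $Y$ is closed, so $k(y)=\int_{y_{0}}^{y}\omega$ solves $dk=\omega$ on the simply connected cover), you identify the derivative of the path $t\mapsto\sigma_{t}$ with the period cocycle $\phi_{y_{0}}^{\omega}$; your verification that $u(\gamma)=\dot{\sigma}(\gamma)\rho(\gamma)^{-1}$ satisfies $u(\gamma_{1}\gamma_{2})=u(\gamma_{1})+\Ad_{\rho(\gamma_{1})}u(\gamma_{2})$ correctly pins down the convention behind the vertical isomorphism, and your basepoint computation (the discrepancy being the coboundary of $a=\int_{y_{0}}^{y}\omega$) closes the argument at the level of cohomology classes. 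The paper instead reduces to the vector bundle case: it chooses a faithful representation $\phi:G\to GL_{n}\mathbb{C}$, forms the diagram relating $\mathbb{B}$ to $\mathbb{G}_{n}$ and $\mathfrak{g}_{\Ad_{\rho}}$ to $\mathfrak{gl}_{\Ad_{\bar{\rho}}}$, and transfers the commutativity of the right triangle --- which is \cite[Lemma 4(b)]{Florentino}, the $GL_{n}\mathbb{C}$ case --- to the left triangle via functoriality of $H^{0}$ and $H^{1}$ and injectivity of the horizontal arrows. Your argument is self-contained and more elementary: it avoids the external citation and also sidesteps the one delicate point in the paper's reduction, namely that $H^{1}\left(\pi_{1},\mathfrak{g}_{\Ad_{\rho}}\right)\to H^{1}\left(\pi_{1},\mathfrak{gl}_{\Ad_{\bar{\rho}}}\right)$ is injective, which implicitly rests on $\mathfrak{g}$ admitting an $\Ad_{\rho}$-invariant complement in $\mathfrak{gl}_{n}$ (available by reductivity, but not spelled out). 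Your computation also makes transparent that goodness of $\rho$ enters only through the identification $T_{[\rho]}\mathbb{B}\cong H^{1}\left(\pi_{1},\mathfrak{g}_{\Ad_{\rho}}\right)$, whereas the paper's approach buys brevity by leveraging the known vector bundle result. The only step worth making explicit in your write-up is the holomorphic dependence of $h_{t\omega}$ on the parameter $t$, which justifies the expansion $h_{t\omega}=e+tk+O(t^{2})$; this is standard ODE theory and poses no difficulty.
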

\begin{proof}
Since $G$ is a connected reductive group over complex numbers, there
is a faithful representation $\phi:G\to GL_{n}\mathbb{C}$. By associating
a representation $\rho\in\Hom\left(\pi_{1},\,G\right)$ to the composition
$\phi\circ\rho=\bar{\rho}\in\Hom\left(\pi_{1},\,GL_{n}\mathbb{C}\right)$,
$\phi$ induces an injective morphism of algebraic varieties $\bar{\phi}:\mathbb{B}\to\mathbb{G}_{n}$,
where $\mathbb{G}_{n}=\Hom\left(\pi_{1},\,GL_{n}\mathbb{C}\right)\quot GL_{n}\mathbb{C}$.
The Lie algebra $\mathfrak{g}$, can be seen as a subalgebra of the
Lie algebra $\mathfrak{gl}=M_{n}\mathbb{C}$ of $GL_{n}\mathbb{C}$,
and we obtain an inclusion of $\pi_{1}$-modules $\mathfrak{g}_{\mathrm{Ad}_{\rho}}\subset\mathfrak{gl}_{\mathrm{Ad}_{\bar{\rho}}}$.
On the other hand, Florentino proved in \cite[Lemma 4(b)]{Florentino}
this result for $G=GL_{n}\mathbb{C}$. So, we obtain the following
diagram, where $E_{\bar{\rho}}$ is the associated vector bundle of
$E_{\rho}$.{\small\[\xymatrixcolsep{0.2in}\xymatrixcolsep{0.2in}\xymatrix@C-=0.3cm{H^1\left(\pi_{1},\, \mathfrak{g}_{\mathrm{Ad}_\rho}\right)\ar@{->}[rrrr]^(.3){}&&&&H^1\left(\pi_{1},\, \mathfrak{gl}_{\mathrm{Ad}_{\bar{\rho}}}\right) \\ &T_{\left[\rho\right]}\mathbb{B}\ar@{->}[lu]^-{\cong}\ar@{->}[rr]^(1){d_{\left[\rho\right]}\bar{\phi}~~~~~~~~~~~~~~~~~~~}&&T_{\left[\bar{\rho}\right]}\mathbb{G}_{n}\ar@{->}[ru]^-{\cong} & \\ H^0\left(X, \Ad(E_{\rho})\otimes \Omega^1_X\right)\ar@{->}[rrrr]^(.2){}\ar@{->}[uu]^{P_{\mathrm{Ad}_{\rho}}}\ar@{->}[ru]^-{d_0Q_{\rho}}&&&&H^0\left(X, \mathrm{End}\left(E_{\bar{\rho}}\right)\otimes \Omega^1_X\right)\ar@{->}[lu]_-{d_0Q_{\bar{\rho}} }\ar@{->}[uu]^{P_{\mathrm{Ad}_{\bar{\rho}}}}}\] }Above,
the horizontal arrows are inclusions of vector spaces, because $H^{0}$
and $H^{1}$ behave functorially. Finally, since the triangle on the
right is commutative, the same holds for the left triangle, as wanted. 
\end{proof}

\section{Schottky space\label{sec:Schottky-moduli-map}}

In this section we compute the dimension of Schottky space and prove
that the strict Schottky space is a Lagrangian subspace of the Betti
space. We also define the Schottky uniformization and moduli maps,
by restricting the uniformization map to Schottky representations,
and to those representations whose flat bundles are semistable.

\subsection{Dimension of Schottky space}

We now compute the dimensions of $\mathbb{S}$ and $\mathbb{S}_{s}$,
using the techniques of group cohomology. By the density result (Theorem
\ref{thm:good rep for g2}), the computations can be carried out at
good representations. Using formula \eqref{eq:S=00003DZvezesS_s}
we can write the inclusion $\mathbb{S}^{\mathsf{gd}}\subset\mathbb{B}^{\mathsf{gd}}$
as 
\begin{eqnarray*}
\hom(F_{g},Z)\times\hom(F_{g},G)^{\mathsf{gd}}\quot G\cong Z^{g}\times\mathbb{S}_{s}^{\mathsf{gd}} & \hookrightarrow & \mathbb{B}^{\mathsf{gd}}\cong\hom(\pi_{1},G)^{\mathsf{gd}}\quot G\\
(\rho_{1},[\rho_{2}]) & \mapsto & [\rho].
\end{eqnarray*}
Above, the notation should be clear according to Section \ref{sec:Schottky-Representations}.
Correspondingly, from Theorem \ref{thm:tangent-space-group-cohom},
we obtain the inclusion of tangent spaces: 
\begin{equation}
T_{[\rho]}\mathbb{S}=T_{\rho_{1}}(Z^{g})\oplus T_{[\rho_{2}]}\mathbb{S}_{s}\cong\mathfrak{z}^{g}\oplus H^{1}\left(F_{g},\mathfrak{g}_{\Ad_{\rho_{2}}}\right)\hookrightarrow H^{1}\left(\pi_{1},\mathfrak{g}_{\Ad_{\rho}}\right)=T_{[\rho]}\mathbb{B}\label{eq:goodtangentspaces}
\end{equation}
for $[\rho]\in\mathbb{S}^{\mathsf{gd}}$. Recall that $\mathfrak{g}\mathfrak{_{\mathrm{Ad}_{\rho_{2}}}}$
denotes the $F_{g}$-module $\lie(G)=\mathfrak{g}$, with the $F_{g}$-action
given by the composition $F_{g}\stackrel{\rho_{2}}{\to}G\stackrel{\mathrm{Ad}}{\to}GL(\mathfrak{g})$. 
\begin{prop}
\label{Thm strict Schotky fst coh grp}Let $g\geq2$. We have $\dim\mathbb{S}_{s}=\left(g-1\right)\dim G+\dim Z.$ \end{prop}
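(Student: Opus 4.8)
The plan is to compute the dimension of $\mathbb{S}_s=\Hom(F_g,G)\quot G$ at a smooth point and to identify that dimension with the dimension of the Zariski tangent space there. By Theorem \ref{thm:good rep for g2}, for $g\geq 2$ the good locus $\mathbb{S}_s^{\mathsf{gd}}$ is a nonempty Zariski-open subset consisting of smooth points of the irreducible variety $\mathbb{S}_s$; hence it suffices to compute $\dim T_{[\rho_2]}\mathbb{S}_s$ for a single good representation $\rho_2\in\Hom(F_g,G)$. Applying Theorem \ref{thm:tangent-space-group-cohom} with $\Gamma=F_g$, this tangent space is isomorphic to the group cohomology $H^1(F_g,\mathfrak{g}_{\Ad_{\rho_2}})$, so the whole problem reduces to computing $\dim H^1(F_g,\mathfrak{g}_{\Ad_{\rho_2}})$.

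First I would compute $\dim Z^1$ and $\dim B^1$ directly. Because $F_g$ is free on the $g$ generators $\gamma_1,\dots,\gamma_g$, the cocycle condition imposes no constraint: a cocycle $\phi\in Z^1(F_g,\mathfrak{g}_{\Ad_{\rho_2}})$ is uniquely and freely determined by the $g$-tuple $(\phi(\gamma_1),\dots,\phi(\gamma_g))\in\mathfrak{g}^g$, which gives $\dim Z^1(F_g,\mathfrak{g}_{\Ad_{\rho_2}})=g\,\dim G$. For the coboundaries, the linear map $\mathfrak{g}\to Z^1$ sending $a$ to the cocycle $\gamma\mapsto\Ad_{\rho_2}(\gamma)\cdot a-a$ has image $B^1$ and kernel exactly the invariants $\mathfrak{g}^{F_g}=H^0(F_g,\mathfrak{g}_{\Ad_{\rho_2}})$, whence $\dim B^1(F_g,\mathfrak{g}_{\Ad_{\rho_2}})=\dim G-\dim H^0(F_g,\mathfrak{g}_{\Ad_{\rho_2}})$.

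It remains to determine $H^0$. The fixed subspace $\mathfrak{g}^{F_g}=\{a\in\mathfrak{g}:\Ad_{\rho_2}(\gamma)\cdot a=a\ \forall\gamma\in F_g\}$ is precisely the Lie algebra of the stabilizer $Z(\rho_2)=\bigcap_{i}C(\rho_2(\gamma_i))$, as one sees by differentiating the conjugation condition. Since $\rho_2$ is good, by definition $Z(\rho_2)=Z$, so $\mathfrak{g}^{F_g}=\mathfrak{z}$ and $\dim H^0=\dim Z$. Combining the three computations,
\[
\dim H^1(F_g,\mathfrak{g}_{\Ad_{\rho_2}})=\dim Z^1-\dim B^1=g\,\dim G-(\dim G-\dim Z)=(g-1)\dim G+\dim Z,
\]
which is the claimed value of $\dim\mathbb{S}_s$.

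The main obstacle, and really the only point needing care, is the identification $\mathfrak{g}^{F_g}=\mathfrak{z}$ through goodness: one must justify that the Lie algebra of the stabilizer of $\rho_2$ under conjugation coincides with the invariant subspace, and that goodness ($Z(\rho_2)=Z$) then forces this to have dimension $\dim Z=\dim Z^{\circ}$. This is where the existence of good strict Schottky representations from Proposition \ref{prop: existence of good and unitary Schottky rep }, together with Proposition \ref{prop:good Schottky as rho1 good } (which relates goodness of the Schottky representation to goodness of $\rho_2$), is essential: it guarantees that such a $\rho_2$ exists for $g\geq2$, so that the tangent-space computation genuinely computes the dimension of $\mathbb{S}_s$. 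It is worth noting that the free-group case differs from the surface-group formulas recalled before Proposition \ref{prop:dim and iso representations} only in the cocycle count, which here is $g\,\dim G$ rather than $(2g-1)\dim G+\dim Z$, reflecting the absence of relations in $F_g$.
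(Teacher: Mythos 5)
Your proof is correct and follows essentially the same route as the paper: reduce to the tangent space at a good representation via Theorem \ref{thm:good rep for g2} and Theorem \ref{thm:tangent-space-group-cohom}, count $\dim Z^{1}(F_{g},\mathfrak{g}_{\Ad_{\rho_{2}}})=g\dim G$ using freeness, and compute $\dim B^{1}=\dim G-\dim Z$ by identifying the kernel of the coboundary map with the Lie algebra of the stabilizer, which goodness forces to be $\mathfrak{z}$. Your phrasing via $H^{0}(F_{g},\mathfrak{g}_{\Ad_{\rho_{2}}})$ is just the paper's map $\psi_{\rho}$ in cohomological language, so there is no substantive difference.
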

\begin{proof}
Since good representations are dense in $\mathbb{S}_{s}$, it is enough
to compute the dimension at a strict good representation, $[\rho]\in\mathbb{S}_{s}^{\mathsf{gd}}$,
$\rho:F_{g}\to G$. By Theorem \ref{thm:tangent-space-group-cohom},
we know 
\[
\dim\mathbb{S}_{s}=\dim T_{[\rho]}\mathbb{S}_{s}=\dim H^{1}\left(F_{g},\mathfrak{g_{\mathrm{Ad_{\rho}}}}\right).
\]
Since $F_{g}$ is a free group, there is no cocycle condition, so
any 1-cocycle is completely defined by the image of its generators;
this means that $Z^{1}(F_{g},\mathfrak{g_{\mathrm{Ad_{\rho}}}})\cong\mathfrak{g}^{g}.$
In order to compute the dimension of the space of $1$-coboundaries,
$B^{1}\left(F_{g},\mathfrak{g_{\mathrm{Ad_{\rho}}}}\right)$, we consider
the linear map between vector spaces 
\[
\begin{array}{cccl}
\psi_{\rho}: & \mathfrak{g} & \to & \mathfrak{g}^{g}\\
 & v & \mapsto & \left(\rho\left(\gamma_{1}\right)v\rho(\gamma_{1})^{-1}-v,\cdots,\rho\left(\gamma_{g}\right)v\rho(\gamma_{g})^{-1}-v\right),
\end{array}
\]
and note that $B^{1}(F_{g},\mathfrak{g_{\mathrm{Ad_{\rho}}}})=\psi_{\rho}(\mathfrak{g})$.
Thus: 
\[
\begin{array}{c}
\dim B^{1}(F_{g},\mathfrak{g_{\mathrm{Ad_{\rho}}}})=\dim\psi_{\rho}(\mathfrak{g})=\dim\mathfrak{g}-\dim\ker\psi_{\rho}=\dim\mathfrak{g}-\dim\mathfrak{z}(\rho)\end{array}
\]
where $\mathfrak{z}(\rho):=\left\{ v\in\mathfrak{g}|\:v\rho(\gamma_{i})=\rho(\gamma_{i})v,\forall i=1,\cdots,g\right\} $
is the Lie algebra of the stabilizer of $\rho$, $Z(\rho)$. Finally,
\[
\dim H^{1}\left(F_{g},\mathfrak{g_{\mathrm{Ad_{\rho}}}}\right)=\dim Z^{1}\left(F_{g},\mathfrak{g_{\mathrm{Ad_{\rho}}}}\right)-\dim B^{1}\left(F_{g},\mathfrak{g_{\mathrm{Ad_{\rho}}}}\right)=g\dim G-\dim G+\dim Z(\rho).
\]
Since $\rho$ is good, by definition $Z(\rho)=Z$, and the proof is
finished. \end{proof}
\begin{cor}
\label{cor: dimension and iso of smooth Schottky rep } For $g\geq2$,
the Schottky space $\mathbb{S}$ is equidimensional (all irreducible
components have the same dimension). Moreover, 
\[
\dim\mathbb{S}=\left(g-1\right)\dim G+\left(g+1\right)\dim Z.
\]
\end{cor}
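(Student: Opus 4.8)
The plan is to derive Corollary~\ref{cor: dimension and iso of smooth Schottky rep } directly from the dimension formula for $\mathbb{S}_{s}$ established in Proposition~\ref{Thm strict Schotky fst coh grp}, combined with the product decomposition of $\mathbb{S}$ recorded in Proposition~\ref{Prop:many-irred-components}. First I would invoke Proposition~\ref{Prop:many-irred-components}, which says every irreducible component of $\mathbb{S}$ is isomorphic to $\left(Z^{\circ}\right)^{g}\times\mathbb{S}_{s}$, where $Z^{\circ}$ is the connected component of the identity of the center. Since all components share this common form, they all have the same dimension, which immediately gives equidimensionality. This reduces the dimension computation to a single component.

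Next I would compute that common dimension. Because $Z^{\circ}$ is an algebraic torus of dimension $\dim Z$ (the center has $\dim Z^{\circ}=\dim Z$, as $Z/Z^{\circ}$ is finite), the factor $\left(Z^{\circ}\right)^{g}$ contributes $g\dim Z$ to the dimension. By Proposition~\ref{Thm strict Schotky fst coh grp}, the strict Schottky space satisfies $\dim\mathbb{S}_{s}=\left(g-1\right)\dim G+\dim Z$. Adding these two contributions for the product, I obtain
\[
\dim\mathbb{S}=g\dim Z+\left(g-1\right)\dim G+\dim Z=\left(g-1\right)\dim G+\left(g+1\right)\dim Z,
\]
which is exactly the claimed formula.

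This argument is essentially a bookkeeping assembly of two already-proven facts, so there is no serious obstacle; the only point requiring a moment of care is confirming that $\dim Z^{\circ}=\dim Z$, which holds because $Z/Z^{\circ}$ is finite and hence the inclusion $Z^{\circ}\hookrightarrow Z$ is an open immersion of the same dimension. One could equally run the computation on the tangent space side: at a good representation $[\rho]\in\mathbb{S}^{\mathsf{gd}}$, the decomposition of tangent spaces in \eqref{eq:goodtangentspaces} gives $\dim T_{[\rho]}\mathbb{S}=\dim\mathfrak{z}^{g}+\dim H^{1}\left(F_{g},\mathfrak{g}_{\Ad_{\rho_{2}}}\right)=g\dim Z+\dim\mathbb{S}_{s}$, recovering the same total. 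I would present the component-based argument as the main line since it simultaneously delivers the equidimensionality statement and the formula in one stroke.
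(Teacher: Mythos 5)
Your proof is correct and follows exactly the paper's own argument: the paper likewise deduces equidimensionality and the formula immediately from Proposition~\ref{Prop:many-irred-components} (every component isomorphic to $\left(Z^{\circ}\right)^{g}\times\mathbb{S}_{s}$) together with the dimension count $\dim\mathbb{S}_{s}=\left(g-1\right)\dim G+\dim Z$ from Proposition~\ref{Thm strict Schotky fst coh grp}, using $\dim Z^{\circ}=\dim Z$. Your added tangent-space cross-check via \eqref{eq:goodtangentspaces} is a harmless bonus, not a divergence.
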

\begin{proof}
This follows immediately from the previous result and from Proposition
\ref{Prop:many-irred-components}, as $\dim Z^{\circ}=\dim Z$. 
\end{proof}

\subsection{Lagrangian subspaces of $\mathbb{S}_{s}$.}

Recall that a Lagrangian submanifold $L\subset M$ of a symplectic
manifold $M$ is a half dimensional submanifold such that the symplectic
form vanishes on any tangent vectors to $L$.

It is well known that character varieties of surface group representations
have a natural symplectic structure (\cite{GoldmanSymNatureFundGrp}),
which can be constructed as follows. Consider an Ad-invariant bilinear
form $\left\langle \,,\,\right\rangle $ on $\mathfrak{g}$. Then,
using the cup product on group cohomology 
\begin{equation}
\cup:H^{1}\left(\pi_{1},\mathfrak{g}_{\Ad_{\rho}}\right)\otimes H^{1}\left(\pi_{1},\mathfrak{g}_{\Ad_{\rho}}\right)\to H^{2}\left(\pi_{1},\mathfrak{g}_{\Ad_{\rho}}\right),\label{eq:cup-product}
\end{equation}
and composing it with the contraction with $\left\langle \,,\,\right\rangle $
and with the evaluation on the fundamental 2-cycle, we obtain a non-degenerate
bilinear pairing: 
\begin{equation}
H^{1}\left(\pi_{1},\mathfrak{g}_{\Ad_{\rho}}\right)\otimes H^{1}\left(\pi_{1},\mathfrak{g}_{\Ad_{\rho}}\right)\overset{\cup}{\longrightarrow}H^{2}\left(\pi_{1},\,\mathfrak{g}_{\Ad_{\rho}}\right)\overset{\langle\,,\,\rangle}{\longrightarrow}H^{2}\left(\pi_{1},\,\mathbb{C}\right)\cong\mathbb{C}\label{eq:cup product}
\end{equation}
Under the identification of $H^{1}\left(\pi_{1},\mathfrak{g}_{\Ad_{\rho}}\right)$
with the tangent space at a good representation $\rho\in\mathbb{B}^{\mathsf{gd}}$,
this pairing defines a complex sympletic form on the complex manifold
$\mathbb{B}^{\mathsf{gd}}$. This symplectic form is complex analytic
with respect to the complex structure on $\mathbb{B}^{\mathsf{gd}}$
coming from the complex structure on $G$, and $\mathbb{S}_{s}^{\mathsf{gd}}\subset\mathbb{B}^{\mathsf{gd}}$
is Lagrangian.\footnote{For a general real Lie group, the analogous pairing defines a smooth
($C^{\infty}$) symplectic structure, see \cite{GoldmanSymNatureFundGrp}.}
\begin{prop}
\label{prop:Schottky-lagrangian}The good locus of the strict Schottky
space $\mathbb{S}_{s}^{\mathsf{gd}}$ is a Lagrangian submanifold
of $\mathbb{B}^{\mathsf{gd}}$. \end{prop}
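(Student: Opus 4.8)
The plan is to check the two defining properties of a Lagrangian submanifold separately: that $\mathbb{S}_s^{\mathsf{gd}}$ is half-dimensional inside $\mathbb{B}^{\mathsf{gd}}$, and that the complex symplectic form of \eqref{eq:cup product} restricts to zero on every tangent space $T_{[\rho]}\mathbb{S}_s^{\mathsf{gd}}$. The first is a bookkeeping exercise with the dimension formulas already established; the second is where the freeness of $F_g$ enters decisively, via the vanishing of its second cohomology.

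For the dimension count I would combine Proposition \ref{prop:dim and iso representations}, which gives $\dim \mathbb{B} = (2g-2)\dim G + 2\dim Z$ at a good representation, with Proposition \ref{Thm strict Schotky fst coh grp}, which gives $\dim \mathbb{S}_s = (g-1)\dim G + \dim Z$. These satisfy $\dim \mathbb{S}_s = \tfrac{1}{2}\dim \mathbb{B}$ exactly, so $\mathbb{S}_s^{\mathsf{gd}}$ is a half-dimensional complex submanifold of $\mathbb{B}^{\mathsf{gd}}$ (the submanifold structure itself being guaranteed by Theorem \ref{thm:good rep for g2}).

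The key step is the isotropy. Fix $[\rho] \in \mathbb{S}_s^{\mathsf{gd}}$ and write $\rho = \rho_2 \circ \varphi$ with $\rho_2 : F_g \to G$, using that $\rho$ is strict Schottky (so $\rho(\alpha_i) = e$ and $\rho(\beta_i) = \rho_2(\gamma_i)$). Then $\mathfrak{g}_{\Ad_\rho}$ is the pullback along $\varphi$ of the $F_g$-module $\mathfrak{g}_{\Ad_{\rho_2}}$, and by \eqref{eq:goodtangentspaces} the tangent inclusion $T_{[\rho]}\mathbb{S}_s^{\mathsf{gd}} \cong H^1(F_g, \mathfrak{g}_{\Ad_{\rho_2}}) \hookrightarrow H^1(\pi_1, \mathfrak{g}_{\Ad_\rho}) = T_{[\rho]}\mathbb{B}$ is exactly the pullback map $\varphi^*$. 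Now the entire pairing \eqref{eq:cup product} is assembled from the cup product, the contraction by the $\Ad$-invariant form $\langle\,,\,\rangle$ (a morphism onto the trivial module $\mathbb{C}$), and the evaluation on the fundamental class, each of which is natural with respect to the group homomorphism $\varphi$. Hence for $a, b \in H^1(F_g, \mathfrak{g}_{\Ad_{\rho_2}})$ the pairing of $\varphi^* a$ and $\varphi^* b$ is the image under $\varphi^* : H^2(F_g, \mathbb{C}) \to H^2(\pi_1, \mathbb{C})$ of the corresponding class computed over $F_g$. Since $F_g$ is free it has cohomological dimension one, so $H^2(F_g, \mathbb{C}) = 0$ and that class vanishes. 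Thus the symplectic form is identically zero on $T_{[\rho]}\mathbb{S}_s^{\mathsf{gd}}$, and combined with the dimension count this yields the Lagrangian property.

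The main obstacle to make fully rigorous is the naturality statement together with the identification of the tangent inclusion as $\varphi^*$: one must verify at the cochain level, on the bar resolution, that the strict Schottky condition genuinely presents $\mathfrak{g}_{\Ad_\rho}$ as the inflation of $\mathfrak{g}_{\Ad_{\rho_2}}$, and that cup product, contraction, and evaluation all commute with the inflation induced by $\varphi$. Once this compatibility is in place, the vanishing is automatic from $H^2(F_g,-) = 0$; the nondegeneracy and complex-analyticity of the ambient form are already recorded in the discussion preceding the statement.
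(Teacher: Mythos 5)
Your proposal is correct and follows essentially the same route as the paper: the half-dimensionality comes from Propositions \ref{prop:dim and iso representations} and \ref{Thm strict Schotky fst coh grp}, and the isotropy from the vanishing of the cup product on $H^{1}\left(F_{g},\mathfrak{g}_{\Ad_{\rho_{2}}}\right)$ because $H^{2}$ of a free group vanishes. The only difference is that you make explicit the naturality of the cup product under the inflation map $\varphi^{*}$ (which the paper leaves implicit when it speaks of ``restricting'' the pairing to $H^{1}\left(F_{g},\mathfrak{g}_{\Ad_{\rho}}\right)$); this is a standard fact from group cohomology, so it is a welcome clarification rather than a genuine obstacle.
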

\begin{proof}
The restriction of the map \eqref{eq:cup-product} to $H^{1}\left(F_{g},\mathfrak{g}_{\Ad_{\rho}}\right)$
is a vanishing map: 
\[
\cup:H^{1}\left(F_{g},\mathfrak{g}_{\Ad_{\rho}}\right)\otimes H^{1}\left(F_{g},\mathfrak{g}_{\Ad_{\rho}}\right)\to H^{2}\left(F_{g},\mathfrak{g}_{\Ad_{\rho}}\right)=0,
\]
because free groups have vanishing higher cohomology groups (see \cite{Brown}).
Since the tangent space, at a good point, to the strict Schottky locus
$\mathbb{S}_{s}$ is identified with $H^{1}\left(F_{g},\mathfrak{g}_{\Ad_{\rho}}\right)$
(see Theorem \ref{thm:tangent-space-group-cohom}), this means that
the symplectic form, defined above on $\mathbb{B}^{\mathsf{gd}}$,
vanishes on any two tangent vectors to $\mathbb{S}_{s}^{\mathsf{gd}}$.
Since the dimension of $\mathbb{B}^{\mathsf{gd}}$ is twice the dimension
of $\mathbb{S}_{s}^{\mathsf{gd}}$ (see Proposition \ref{prop:dim and iso representations}
and Proposition \ref{Thm strict Schotky fst coh grp}), we conclude
the result. \end{proof}
\begin{rem}
(1) The proof that $\mathcal{L}_{G}$ is Lagrangian is only done for
complex semisimple groups in \cite{BaragliaSchaposnick}. Thus, Proposition
\ref{prop:Schottky-brane} generalizes that statement for reductive
complex algebraic groups. Moreover, since there are good strict Schottky
representations for every $g\geq2$, the current approach furnishes
a proof that the Baraglia-Schaposnik branes are non-empty, at least
in the conditions of Remark \ref{rem:g+1-loops}.\\
 (2) Proposition \ref{prop:Schottky-brane} shows that we have an
inclusion $\mathbb{S}_{s}\subset\mathcal{L}_{G}$ in the $(A,B,A)$-branes
of \cite{BaragliaSchaposnick} and in the case $G$ is an adjoint
group, $\mathbb{S}=\mathbb{S}_{s}\subset\mathcal{L}_{G}$. In a future
work, we plan to study the conditions under which this inclusion is
actually a bijection.%

\end{rem}

\subsection{The Schottky uniformization and moduli maps}
\begin{defn}
The \emph{Schottky uniformization map} 
\begin{equation}
\mathbf{W}:\,\mathbb{S}\to M_{G}\label{eq: Schottky moduli map on the cat quot-1-1}
\end{equation}
is defined by $\mathbf{W}[\rho]:=[E_{\rho}]$, the isomorphism class
of the Schottky $G$-bundle $E_{\rho}$. From \eqref{eq:uniformization-map},
$\mathbf{W}=\mathbf{E}\circ i$ where $i:\mathbb{S}\to\mathbb{B}$
is the inclusion from Proposition \ref{prop:SchottkyAndFreeGroup}. \end{defn}
\begin{rem}
\label{rem:nosemistable}(1) As mentioned above, $\mathbf{W}[\rho]$
is not necessarily semistable. In fact, maximally unstable rank vector
2 bundles with trivial determinant are Schottky (see \cite{Florentino}).
Also, $\mathbb{\mathbf{W}}$ is not injective in general: this happens
already for the line bundle case (see \cite{Florentino}).\\
 (2) Recall that, from Theorem \ref{thm:Schottky bdl has trivial type},
$\mathbf{W}[\rho]$ has trivial topological type.%

\end{rem}
As defined, the target of the Schottky uniformization map $M_{G}$
is a set, and it can be given the structure of a stack. However, since
we want to consider the relation between Schottky space $\mathbb{S}$
and the moduli space of $G$-bundles, we need to further restrict
$\mathbb{\mathbf{W}}$ to be a morphism of algebraic varieties.

Let $\mathcal{M}_{G}^{F}=\mathcal{M}_{G}\cap M_{G}$ be the moduli
space of semistable $G$-bundles on $X$ that admit a flat connection.
It is a, generally singular, projective complex algebraic variety.
In order to characterize the derivative of the Schottky map $\mathbf{W}$,
we will consider the subsets 
\[
\mathbb{B}^{*}:=\mathbf{E}^{-1}\left(\mathcal{M}_{G}^{F}\right),\quad\quad\mathbb{S}^{*}:=\mathbf{W}^{-1}\left(\mathcal{M}_{G}^{F}\right),
\]
consisting of representations (resp. Schottky representations) $[\rho]$
whose associated bundles $E_{\rho}$ are semistable. 
\begin{prop}
For $g\geq2$, the subset $\mathbb{S}^{*}\subset\mathbb{S}$ contains
the unitary Schottky representations. Moreover, $\mathbb{S}^{*}\cap\mathbb{S}^{\mathsf{gd}}$
is open in $\mathbb{S}$. \end{prop}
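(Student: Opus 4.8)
The plan is to handle the two claims separately. The containment of the unitary Schottky representations is immediate from the unitary correspondence: if $\rho:\pi_1\to K\subset G$ is a Schottky representation with values in a maximal compact subgroup $K$ (such representations exist for $g\ge 2$ by Proposition \ref{prop: existence of good and unitary Schottky rep }), then Ramanathan's extension of the Narasimhan--Seshadri theorem \cite{Ramanathan1} shows that $E_\rho$ is polystable, hence semistable. As $E_\rho$ is flat by construction, $\mathbf{W}[\rho]=[E_\rho]\in\mathcal{M}_G\cap M_G=\mathcal{M}_G^F$, i.e.\ $[\rho]\in\mathbf{W}^{-1}(\mathcal{M}_G^F)=\mathbb{S}^*$. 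Thus $\mathbb{S}^*$ contains every class of a unitary Schottky representation.

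For the openness of $\mathbb{S}^*\cap\mathbb{S}^{\mathsf{gd}}$, the key input is that semistability is an open condition in families. The associated-bundle construction \eqref{eq:E_rho} organizes the bundles $E_\rho$ into a family over $\mathcal{S}$: quotienting $\mathcal{S}\times Y\times G$ by the diagonal $\pi_1$-action $\gamma\cdot(\rho,y,g)=(\rho,\gamma\cdot y,\rho(\gamma)g)$ gives a (holomorphic) $G$-bundle over $\mathcal{S}\times X$ restricting to $E_\rho$ on $\{\rho\}\times X$. By the openness of semistability in such a family, the locus $\mathcal{S}^{\mathrm{ss}}:=\{\rho\in\mathcal{S}:E_\rho\text{ is semistable}\}$ is open in $\mathcal{S}$, and it is $G$-invariant since $E_\rho\cong E_{g\cdot\rho}$. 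I then restrict to the good locus, where by Theorem \ref{thm:good rep for g2} the map $\pi:\mathcal{S}^{\mathsf{gd}}\to\mathbb{S}^{\mathsf{gd}}$ is a geometric quotient. Every fibre of $\pi$ is a single orbit, so the $G$-invariant open set $\mathcal{S}^{\mathrm{ss}}\cap\mathcal{S}^{\mathsf{gd}}$ is saturated and its image $\mathbb{S}^*\cap\mathbb{S}^{\mathsf{gd}}=\pi(\mathcal{S}^{\mathrm{ss}}\cap\mathcal{S}^{\mathsf{gd}})$ is open in $\mathbb{S}^{\mathsf{gd}}$. Since the good points are stable, the good locus $\mathbb{S}^{\mathsf{gd}}$ descends to an open subset of $\mathbb{S}$ (Theorem \ref{thm:good rep for g2}), whence $\mathbb{S}^*\cap\mathbb{S}^{\mathsf{gd}}$ is open in $\mathbb{S}$.

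The step I expect to require the most care is the openness of semistability in the correct category. Since the tautological family above is built from the universal cover $Y$, it is a priori only holomorphic, so one should either invoke the analytic version of the openness of the semistable locus for families of principal $G$-bundles, or transport the family to the de Rham moduli space via the Riemann--Hilbert correspondence, where it becomes algebraic and the openness is part of Ramanathan's GIT construction of $\mathcal{M}_G$ \cite{Ramanathan2}. For reductive, non-semisimple $G$ one must also use Ramanathan's parabolic notion of semistability consistently, rather than semistability of a single associated vector bundle. Granting the appropriate openness statement, the descent through the geometric quotient is routine.
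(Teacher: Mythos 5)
Your proof is correct and follows essentially the same route as the paper: the containment of unitary Schottky representations is verbatim the paper's argument (unitarity plus Ramanathan's theorem gives a semistable flat bundle, hence membership in $\mathbb{S}^{*}$), and for openness the paper simply writes ``so it is open in $\mathbb{S}$, by the coarse moduli property,'' which is exactly the mechanism you unpack. Your tautological family over $\mathcal{S}$, the openness of the semistable locus in that family, and the descent through the geometric quotient on the good locus (with $\mathbb{S}^{\mathsf{gd}}$ open in $\mathbb{S}$ by Theorem \ref{thm:good rep for g2}) constitute a careful expansion of the paper's one-line appeal, including the holomorphic-versus-algebraic subtlety that the paper leaves implicit.
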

\begin{proof}
By Proposition \ref{prop: existence of good and unitary Schottky rep }
and Theorem \ref{thm:good rep for g2} we know that $\mathbb{S}^{\mathsf{gd}}$
contains unitary representations and it is smooth and open in $\mathbb{S}$,
since $g\geq2$. If $\rho\in\mathcal{S}$ is a unitary representation,
then $E_{\rho}$ is semistable by Ramanathan's theorem. So, $[E_{\rho}]\in\mathcal{M}_{G}^{F}$
and $[\rho]\in\mathbf{W}^{-1}(E_{\rho})\subset\mathbb{S}^{*}$. Thus
$\mathbb{S}^{*}\cap\mathbb{S}^{\mathsf{gd}}$ is non-empty, so it
is open in $\mathbb{S}$, by the coarse moduli property.%
\end{proof}
\begin{defn}
\label{def:Schottky-moduli-map}The \emph{Schottky moduli map} 
\begin{equation}
\mathbf{V}:\,\mathbb{S}^{*}\to\mathcal{M}_{G}\label{eq: Schottky moduli map on the cat quot-1-1-1}
\end{equation}
is defined to be the restriction of the Schottky uniformization map
$\mathbf{W}$ to the subset $\mathbb{S}^{*}=\mathbf{W}^{-1}(\mathcal{M}_{G}^{F})\subset\mathbb{S}$
of representations defining semistable $G$-bundles. %
\end{defn}
\begin{thm}
\label{thm:kernel of Schottky map} Let $\rho$ be a good Schottky
representation, then 
\begin{equation}
\ker d_{[\rho]}\mathbf{V}\cong T_{[\rho]}\mathbb{S}\bigcap\mathrm{Im}\,d_{0}Q_{\rho}.\label{eq: ker dWrho}
\end{equation}
\end{thm}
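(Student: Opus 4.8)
The plan is to exploit the factorization of the Schottky moduli map through the uniformization map and then reduce to the already-established Lemma \ref{imdQequaldE}. Recall from Definition \ref{def:Schottky-moduli-map} that $\mathbf{V}$ is the restriction to $\mathbb{S}^{*}$ of the Schottky uniformization map $\mathbf{W} = \mathbf{E} \circ i$, where $i : \mathbb{S} \hookrightarrow \mathbb{B}$ is the inclusion of Proposition \ref{prop:SchottkyAndFreeGroup}. Since $\rho$ is good (and $[\rho] \in \mathbb{S}^{*}$), Theorem \ref{thm:good rep for g2} guarantees that $[\rho]$ is a smooth point of both $\mathbb{S}$ and $\mathbb{B}$ and that $\mathbb{S}^{\mathsf{gd}}$ is a complex submanifold of $\mathbb{B}^{\mathsf{gd}}$; in particular the differential $d_{[\rho]}i : T_{[\rho]}\mathbb{S} \to T_{[\rho]}\mathbb{B}$ is injective (this is the inclusion \eqref{eq:goodtangentspaces}), so I may identify $T_{[\rho]}\mathbb{S}$ with its image inside $T_{[\rho]}\mathbb{B}$.

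First I would apply the chain rule to $\mathbf{V} = \mathbf{E} \circ i$, obtaining $d_{[\rho]}\mathbf{V} = d_{[\rho]}\mathbf{E} \circ d_{[\rho]}i$. Under the identification above, a vector $v \in T_{[\rho]}\mathbb{S}$ lies in $\ker d_{[\rho]}\mathbf{V}$ exactly when $v$, regarded as an element of $T_{[\rho]}\mathbb{B}$, lies in $\ker d_{[\rho]}\mathbf{E}$, giving
\[
\ker d_{[\rho]}\mathbf{V} = T_{[\rho]}\mathbb{S} \cap \ker d_{[\rho]}\mathbf{E}.
\]
I would then invoke Lemma \ref{imdQequaldE}, which states that $\ker d_{[\rho]}\mathbf{E} = \mathrm{Im}\, d_0 Q_\rho$. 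Substituting yields
\[
\ker d_{[\rho]}\mathbf{V} \cong T_{[\rho]}\mathbb{S} \cap \mathrm{Im}\, d_0 Q_\rho,
\]
which is the claim.

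The argument is essentially formal, a chain rule combined with Lemma \ref{imdQequaldE}, so I expect no serious analytic difficulty. The only points requiring care are that $d_{[\rho]}\mathbf{E}$ be a genuine linear map of tangent spaces and that $d_{[\rho]}i$ be treated as an honest inclusion: both rest on the goodness of $\rho$ (making the quotients manifolds near $[\rho]$, via Theorem \ref{thm:good rep for g2}) together with $[\rho] \in \mathbb{S}^{*}$ (so that $\mathbf{E}$ lands in the locus where $T_{[E_\rho]}\mathcal{M}_G$ is defined, as in diagram \eqref{eq:dQrho=00003D00003D00003DPAd}). Once these are in place, the intersection on the right-hand side is well defined and the identity follows at once.
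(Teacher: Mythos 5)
Your proof is correct and follows exactly the route the paper intends: the paper's own proof consists of the single line ``It is immediate from Lemma \ref{imdQequaldE},'' and what you have written is precisely the chain-rule argument for $\mathbf{V}=\mathbf{E}\circ i$ together with the identification of $T_{[\rho]}\mathbb{S}$ as a subspace of $T_{[\rho]}\mathbb{B}$ that this one-liner leaves implicit. Your additional care about smoothness at good points (via Theorem \ref{thm:good rep for g2}) and about $[\rho]\in\mathbb{S}^{*}$ is a welcome expansion of the same argument, not a different one.
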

\begin{proof}
It is immediate from Lemma \ref{imdQequaldE}. 
\end{proof}

\section{Surjectivity of the Schottky moduli map\label{sec:Surjectivity-of-the}}

In this section, we consider the image of the Schottky moduli map
inside the moduli space of semistable $G$-bundles. The main result
is the proof that this map is a local submersion at a good and unitary
Schottky representation (see Theorem \ref{thm: main thm}).

\subsection{Bilinear relations\label{chap:Schottky moduli map}}

Let again $K$ denote a maximal compact subgroup of the complex connected
reductive algebraic group $G$. We fix an hermitian structure on the
complex Lie algebra $\mathfrak{g}$ of $G$, denoted by $\left\langle \,,\,\right\rangle :\mathfrak{g}\times\mathfrak{g}\to\mathbb{C}$
($\mathbb{C}$-linear on the first entry) which is invariant under
the adjoint action of $K$ on $\mathfrak{g}$. For example, if $G=GL_{n}\mathbb{C}$,
we can take $\left\langle A,\,B\right\rangle :=\mathrm{tr}\left(AB^{*}\right),$
$\forall A,\,B\in\mathfrak{g}$ ,where $*$ means conjugate transpose
and $\mbox{tr}$ the matrix trace.

We now define an hermitian inner product on $H^{0}\left(X,\Ad\left(E_{\rho}\right)\otimes\Omega_{X}^{1}\right)$,
when $\rho:\pi_{1}\to K\subset G$ is a unitary representation. As
before, $Y$ is a universal cover of the compact Riemann surface $X$
of genus $g\geq2$, and we let $D\subset Y$ denote a fundamental
domain for the quotient $X=\left.Y\right/\pi_{1}$. %

\begin{defn}
\label{def 1st def hermitian inner prod}Let $\omega_{1},\,\omega_{2}\in H^{0}\left(X,\Ad E_{\rho}\otimes\Omega_{X}^{1}\right)$,
with $\rho:\pi_{1}\to K\subset G$. Define the following hermitian
inner product 
\begin{equation}
\left(\omega_{1},\,\omega_{2}\right):=i\int_{X}\left\langle \omega_{1},\omega_{2}\right\rangle :=i\int_{D}\left\langle h_{1}(z),\,h_{2}(z)\right\rangle dz\wedge d\bar{z}\label{eq: inner product 1forms}
\end{equation}
where $\omega_{i}=h_{i}(z)dz$ for $z\in Y$. \end{defn}
\begin{rem*}
The above integral depends on the choice of the hermitian inner product
on $\mathfrak{g}$. However, by unitarity of $\rho$, it is independent
of the choice of the fundamental domain $D$.%

\end{rem*}
To prove the unitarity of the period map $\omega\mapsto P_{\Ad_{\rho}}\left(\omega\right)$,
at unitary representations, generalizing \cite[Proposition 5]{Florentino},
we need also a pairing on $H^{1}\left(\pi_{1},\,\mathfrak{g}_{\Ad_{\rho}}\right)$.
We use the so-called Fox calculus, and extend 1-cocycles $\phi:\pi_{1}\to\mathfrak{g}_{\Ad_{\rho}}$
by $\mathbb{Z}$-linearity to the group ring $\mathbb{Z}\left[\pi_{1}\right]$
(see \cite{Florentino,GoldmanSymNatureFundGrp}). The boundary $\partial D$
can be considered as a $4g$ polygon, with a vertex $z_{0}\in Y$,
and the other vertices ordered as: 
\[
\left\{ z_{0},\,\alpha_{1}z_{0},\,\alpha_{1}\beta_{1}z_{0},\,\alpha_{1}\beta_{1}\alpha_{1}^{-1}z_{0},\,R_{1}z_{0},\,R_{1}\alpha_{2}z_{0},\cdots,\,R_{g}z_{0}=z_{0}\right\} 
\]
where $R_{k}=\prod_{i=1}^{k}\alpha_{i}\beta_{i}\alpha_{i}^{-1}\beta_{i}^{-1}$,
and define $R:=R_{g}$. The Fox derivatives of $R$ give: 
\[
\begin{array}{ccc}
{\displaystyle \frac{\partial R}{\partial\alpha_{i}}}:=R_{i-1}-R_{i}\beta_{i}, & \quad\quad\quad & {\displaystyle \frac{\partial R}{\partial\beta_{i}}}:=R_{i-1}\alpha_{i}-R_{i}\end{array}.
\]
Introduce also a $\mathbb{Z}$-linear involution $\sharp$ on $\mathbb{Z}\left[\pi_{1}\right]$
defined by $\sharp\left(\sum n_{i}\gamma_{i}\right):=\sum n_{i}\gamma_{i}^{-1}$,
$n_{i}\in\mathbb{Z}$. In particular: 
\begin{equation}
\begin{array}{ccc}
\sharp{\displaystyle \frac{\partial R}{\partial\alpha_{i}}}=R_{i-1}^{-1}-\beta_{i}^{-1}R_{i}^{-1} & \,\text{\,and}\,\:\: & \sharp{\displaystyle \frac{\partial R}{\partial\beta_{i}}}=\alpha_{i}^{-1}R_{i-1}^{-1}-R_{i}^{-1}.\end{array}\label{eq:involution Ri}
\end{equation}

\begin{defn}
Define a pairing on $H^{1}\left(\pi_{1},\mathfrak{g}_{\Ad_{\rho}}\right)$
by 
\[
\left\langle \!\left\langle \phi_{1},\,\phi_{2}\right\rangle \!\right\rangle :=i{\displaystyle \ \sum_{j=1}^{g}}\left\langle \phi_{1}\left(\sharp{\textstyle \frac{\partial R}{\partial\beta_{j}}}\right),\,\phi_{2}\left(\beta_{j}\right)\right\rangle -\left\langle \phi_{1}\left(\sharp{\textstyle \frac{\partial R}{\partial\alpha_{j}}}\right),\,\phi_{2}\left(\alpha_{j}\right)\right\rangle \!\!,
\]
for $\phi_{1},\,\phi_{2}\in Z^{1}\left(\pi_{1},\mathfrak{g}_{\Ad_{\rho}}\right)$. \end{defn}
\begin{rem}
It can be shown that this pairing is well defined on cohomology classes,
and is hermitian (being a complex analogue of the pairing in \cite{GoldmanSymNatureFundGrp}).
Moreover, it coincides (up to the factor $i$) with the cup product
pairing in \eqref{eq:cup-product}, when using our hermitian structure
on $\mathfrak{g}$.\end{rem}
\begin{thm}
\label{prop: Bilinear relations for differentials of pi1module} Let
$\rho:\pi_{1}\to K\subset G$ be a unitary and good representation.
Then, for all $\omega_{1},\,\omega_{2}\in H^{0}\left(X,\Ad\left(E_{\rho}\right)\otimes\Omega_{X}^{1}\right)$,
we have: 
\[
\left(\omega_{1},\omega_{2}\right)=\left\langle \!\left\langle P_{\Ad_{\rho}}\left(\omega_{1}\right),\,P_{\Ad_{\rho}}\left(\omega_{2}\right)\right\rangle \!\right\rangle .
\]
In other words, at a good and unitary representation, the period map
is unitary. \end{thm}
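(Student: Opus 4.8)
The plan is to generalize the Riemann bilinear relations computation of \cite[Proposition 5]{Florentino} from $GL_n\mathbb{C}$ to an arbitrary reductive $G$, working directly with $\mathfrak{g}$-valued forms. First I would lift everything to the universal cover $Y$: regard $\omega_1,\omega_2$ as holomorphic $\mathfrak{g}$-valued $1$-forms satisfying the equivariance $\Ad_\rho(\gamma)\cdot\omega_k=\omega_k^\gamma\gamma'$ from Theorem \ref{analytic equivalence G bundles}, and fix primitives $f_k(y):=\int_{z_0}^y\omega_k$ with $f_k(z_0)=0$. A direct computation with the cocycle law gives the affine transformation rule $f_k(\gamma\cdot y)=\phi_{z_0}^{\omega_k}(\gamma)+\Ad_\rho(\gamma)f_k(y)$, where $\phi_{z_0}^{\omega_k}$ is the period cocycle of Proposition \ref{prop:Period map-1}. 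Because $\omega_2$ is holomorphic, one checks that the $2$-form $\langle\omega_1,\omega_2\rangle$ equals $d\langle f_1,\omega_2\rangle$, so Stokes' theorem converts the area integral into a boundary integral,
\[
(\omega_1,\omega_2)=i\int_D\langle\omega_1,\omega_2\rangle=i\int_{\partial D}\langle f_1,\omega_2\rangle.
\]
The primitive is placed on the $\omega_1$-factor precisely so that $\omega_2$ ends up integrated over the edges (producing $\phi^{\omega_2}$ on the generators), while $f_1$ contributes the periods of $\omega_1$.

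Next I would decompose $\partial D$ into its $4g$ edges. Using the vertex ordering $\{z_0,\alpha_1z_0,\alpha_1\beta_1z_0,\dots,R_gz_0=z_0\}$, the two edges of the $j$-th block lying over $\alpha_j$ are the translates $R_{j-1}\cdot A_j$ and $-R_j\beta_j\cdot A_j$ of the standard arc $A_j:=[z_0,\alpha_jz_0]$, and the two over $\beta_j$ are $R_{j-1}\alpha_j\cdot B_j$ and $-R_j\cdot B_j$, with $B_j:=[z_0,\beta_jz_0]$. Their group-ring coefficients assemble exactly into the Fox derivatives $\tfrac{\partial R}{\partial\alpha_j}=R_{j-1}-R_j\beta_j$ and $\tfrac{\partial R}{\partial\beta_j}=R_{j-1}\alpha_j-R_j$. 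On a translated arc $\gamma\cdot c$ I substitute the equivariance of $\omega_1$ and the transformation law of $f_1$, so that
\[
\int_{\gamma\cdot c}\langle f_1,\omega_2\rangle=\int_c\langle\phi^{\omega_1}(\gamma),\Ad_\rho(\gamma)\omega_2\rangle+\int_c\langle \Ad_\rho(\gamma)f_1,\Ad_\rho(\gamma)\omega_2\rangle .
\]

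Here is where unitarity enters decisively. Since $\rho$ takes values in $K$, $\Ad_\rho(\gamma)$ preserves the hermitian form $\langle\,,\,\rangle$, so the second integrand equals $\langle f_1,\omega_2\rangle$; when two edges share the same arc with opposite chain signs, these untranslated contributions cancel in pairs. For the first integrand, writing $\phi^{\omega_1}(\gamma)=-\Ad_\rho(\gamma)\phi^{\omega_1}(\gamma^{-1})$ from the cocycle identity and again invoking $K$-invariance replaces the prefix $\gamma$ by its inverse, which is exactly the involution $\sharp$ appearing in the pairing. Collecting the surviving terms and using $\int_{A_j}\omega_2=\phi^{\omega_2}(\alpha_j)$, $\int_{B_j}\omega_2=\phi^{\omega_2}(\beta_j)$, leaves a sum of terms $\langle\phi^{\omega_1}(\sharp\tfrac{\partial R}{\partial\alpha_j}),\phi^{\omega_2}(\alpha_j)\rangle$ and $\langle\phi^{\omega_1}(\sharp\tfrac{\partial R}{\partial\beta_j}),\phi^{\omega_2}(\beta_j)\rangle$, which is precisely $\langle\!\langle P_{\Ad_\rho}(\omega_1),P_{\Ad_\rho}(\omega_2)\rangle\!\rangle$.

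I expect the main obstacle to be purely bookkeeping: tracking the orientations of the edges, the sign in Stokes' theorem relative to the orientation of $dz\wedge d\bar z$, and the signs produced by the $\sharp$-involution, so that the surviving terms reproduce $\langle\!\langle\cdot,\cdot\rangle\!\rangle$ with the correct relative sign between the $\alpha$- and $\beta$-contributions. Unitarity of $\rho$ is used twice and is genuinely essential: it makes the diagonal terms cancel and converts prefixes into their inverses, and (as noted after Definition \ref{def 1st def hermitian inner prod}) it is also what makes the boundary integral independent of the chosen fundamental domain $D$, so that the asserted identity is well posed. The reductive (rather than matrix) setting introduces no new difficulty beyond replacing the trace pairing by the fixed $K$-invariant hermitian form on $\mathfrak{g}$.
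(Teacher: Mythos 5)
Your proposal is correct and follows essentially the same route as the paper's proof: integrate by parts via Stokes' theorem over the $4g$-gon fundamental domain with the primitive $f_{1}$ of $\omega_{1}$ satisfying $f_{1}(\gamma\cdot z)=\phi^{\omega_{1}}(\gamma)+\Ad_{\rho}(\gamma)f_{1}(z)$, pair the translated edges so that their group-ring coefficients assemble into the Fox derivatives of $R$, and use $K$-invariance of the hermitian form exactly twice --- once to cancel the $f_{1}$-dependent diagonal terms on paired edges, and once with the cocycle identity $\phi(\gamma)=-\Ad_{\rho}(\gamma)\phi(\gamma^{-1})$ to produce the involution $\sharp$. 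The remaining differences (e.g.\ your explicit labelling of the arcs $A_{j}$, $B_{j}$ versus the paper's direct change of variables in the paired integrals) are purely organizational.
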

\begin{proof}
Fix a base point $y=z_{0}\in Y$, let $\phi_{1}\left(\gamma\right):=\int_{y}^{\gamma\cdot y}\omega_{1}$
be a 1-cocycle representing $P_{\Ad_{\rho}}\left(\omega_{1}\right)$,
and write $\omega_{2}=h(z)\,dz$. Define also $f:Y\to\mathfrak{g}$
by $f\left(z\right):=\int_{y}^{z}\omega_{1}$, so that we have $\omega_{1}=df$
(with a slight abuse of notation we identify forms in $D\subset X$
with their pullbacks to $Y$). Computing as in Proposition \ref{prop:Period map-1},
this function verifies 
\begin{equation}
f(\gamma z)=\phi_{1}(\gamma)+\int_{\gamma\cdot y}^{\gamma\cdot z}\omega_{1}=\phi_{1}(\gamma)+\int_{y}^{z}\gamma\cdot\omega=\phi_{1}(\gamma)+\gamma\cdot f(z),\label{eq: f(gamma) property}
\end{equation}
where we write $\gamma\cdot f$ for the $\Ad_{\rho}$-action of $\pi_{1}$
on functions on $Y$. Note that, for 1-forms on $Y$, we have $\gamma\cdot h\,dz=h(\gamma z)\gamma'(z)\,dz$.

Using $\left\langle \omega_{1},\omega_{2}\right\rangle =d\left\langle f,\,hdz\right\rangle =d\left(\left\langle f,\,h\right\rangle \overline{dz}\right)$,
applying Stokes' theorem to \eqref{eq: inner product 1forms}, and
decomposing the boundary $\partial D$ as the $4g$ polygon described
above, we get: 
\begin{align}
\left(\omega_{1},\,\omega_{2}\right) & =i\int_{\partial D}\left\langle f\left(z\right),h\left(z\right)\right\rangle \overline{dz}=\label{eq:inner product 2}\\
 & =i{\displaystyle \int}_{\!\!\!y}^{\alpha_{1}y}\left\langle f(z),h(z)\right\rangle \,\,\overline{dz}+\cdots+i{\displaystyle \int}_{\!\!\!R_{g-1}\alpha_{g}\beta_{g}\alpha_{g}^{-1}y}^{R_{g}y}\left\langle f(z),h(z)\right\rangle \,\overline{dz}\nonumber 
\end{align}
For each $j=1,\cdots,g$, we reduce the pair of integrals: 
\begin{equation}
\int_{R_{j-1}y}^{R_{j-1}\alpha_{j}y}\!\!\!\left\langle f,h\right\rangle \overline{dz}+\int_{R_{j-1}\alpha_{j}\beta_{j}y}^{R_{j-1}\alpha_{j}\beta_{j}\alpha_{j}^{-1}y}\!\!\!\left\langle f,h\right\rangle \overline{dz},\label{eq:integrals-alpha}
\end{equation}
to a single one by using the change of variables property \eqref{eq: f(gamma) property},
and the $\Ad_{\rho}$-invariance $\left\langle \gamma\cdot f,\,\gamma\cdot h\right\rangle =\left\langle f,\,h\right\rangle $
for all $\gamma\in\pi_{1}$. Employing the notation $f^{\gamma}\equiv f\circ\gamma$,
and using $R_{j-1}\alpha_{j}\beta_{j}\alpha_{j}^{-1}=R_{j}\beta_{j}$,
the expression \eqref{eq:integrals-alpha} equals: 
\begin{align*}
 & \int_{R_{j-1}y}^{R_{j-1}\alpha_{j}y}\left\langle f,h\right\rangle \,\overline{dz}-\int_{R_{j}\beta_{j}y}^{R_{j}\beta_{j}\alpha_{j}y}\left\langle f,h\right\rangle \,\overline{dz}\ =\\
 & =\int_{y}^{\alpha_{j}y}\left(\left\langle f^{R_{j-1}},h^{R_{j-1}}\right\rangle \overline{R_{j-1}'}-\left\langle f^{R_{j}\beta_{j}},h^{R_{j}\beta_{j}}\right\rangle \overline{(R_{j}\beta_{j})'}\right)\,\overline{dz}\\
 & =\int_{y}^{\alpha_{j}y}\!\!\!\left(\left\langle \phi_{1}(R_{j-1})+R_{j-1}\!\cdot\!f,\,R_{j-1}\!\cdot\!h\right\rangle -\left\langle \phi_{1}(R_{j}\beta_{j})+R_{j}\beta_{j}\!\cdot\!f,\,R_{j}\beta_{j}\!\cdot\!h\right\rangle \right)\,\overline{dz}\\
 & =\int_{y}^{\alpha_{j}y}\left(\left\langle \phi_{1}(R_{j-1}),\,R_{j-1}\cdot h\right\rangle -\left\langle \phi_{1}(R_{j}\beta_{j}),\,R_{j}\beta_{j}\cdot h\right\rangle \right)\,\overline{dz}\\
 & =\int_{y}^{\alpha_{j}y}\left(-\left\langle \phi_{1}(R_{j-1}^{-1}),\,h\right\rangle +\left\langle \phi_{1}(\beta_{j}^{-1}R_{j}^{-1}),\,h\right\rangle \right)\,\overline{dz}\\
 & =-\left\langle \phi_{1}(R_{j-1}^{-1}),\,h\right\rangle +\left\langle \phi_{1}(\beta_{j}^{-1}R_{j}^{-1}),\,h\right\rangle =-\left\langle \phi_{1}\left(\sharp{\textstyle \frac{\partial R}{\partial\alpha_{j}}}\right),\,\phi_{2}\left(\alpha_{j}\right)\right\rangle 
\end{align*}
where we also used the cocycle property $\phi_{1}\left(\gamma\right)=-\Ad_{\rho}\left(\gamma\right)\cdot\phi_{1}\left(\gamma^{-1}\right)=-\gamma\cdot\phi_{1}\left(\gamma^{-1}\right)$.
An analogous computation for the integrals $\int_{R_{j-1}\alpha_{j}y}^{R_{j-1}\alpha_{j}\beta_{j}y}$
and $\int_{R_{j-1}\alpha_{j}\beta_{j}\alpha_{j}^{-1}y}^{R_{j}y}$,
and a substitution into Equation \eqref{eq:inner product 2} provides
the desired formula.
\end{proof}
Theorem \ref{prop: Bilinear relations for differentials of pi1module}
may be called the bilinear relations for periods of $\Ad\left(E_{\rho}\right)$
since it reduces to the classical Riemann's bilinear relations in
the one dimensional case, that is, when $\rho\in\hom(\pi_{1},\mathbb{C}^{*})$
(see \cite{Florentino}).

\subsection{Derivative at unitary representations}

From Theorem \ref{thm:kernel of Schottky map} and (\ref{eq:goodtangentspaces}),
we know that the kernel of the derivative of the Schottky map at a
good Schottky representation $\rho\in\Hom\left(\pi_{1},\,G\right)$
is given by 
\[
\mathrm{\ker d_{[\rho]}\mathbf{V}\,\cong\,}T_{[\rho]}\mathbb{S}\bigcap\mathrm{Im}\,d_{0}Q_{\rho}\,\cong\,\left(\mathfrak{z}^{g}\oplus H^{1}\left(F_{g},\,\mathfrak{g}_{\Ad\rho_{2}}\right)\right)\bigcap\mathrm{Im}d_{0}Q_{\rho}
\]
where $\rho=\left(\rho_{1},\,\rho_{2}\right):F_{g}\to Z\times G$,
as in Section 2. According to Proposition \ref{PeriodMapEQdiFQrho},
since $d_{0}Q_{\rho}$, coincides with $P_{\Ad_{\rho}}$, we can write
the kernel as the following intersection 
\[
\mathrm{\ker d_{[\rho]}\mathbf{V}\cong}\left(\mathfrak{z}^{g}\oplus H^{1}\left(F_{g},\,\mathfrak{g}_{\Ad\rho_{2}}\right)\right)\bigcap\mathrm{Im}P_{\Ad_{\rho}}.
\]

Note that we are identifying the cohomology space, given by $\mathfrak{z}^{g}\oplus H^{1}\left(F_{g},\,\mathfrak{g}_{\Ad\rho_{2}}\right),$
with its image under the natural inclusion $\mathfrak{z}^{g}\oplus H^{1}\left(F_{g},\,\mathfrak{g}_{\Ad_{\rho_{2}}}\right)\subset H^{1}\left(\pi_{1},\,\mathfrak{g}_{\Ad_{\rho}}\right).$

In the case $\rho$ is strict, $T_{[\rho]}\mathbb{S}_{s}\cong H^{1}\left(F_{g},\,\mathfrak{g}_{\Ad\rho_{2}}\right)$
and we can identify the cohomology space $H^{1}\left(F_{g},\,\mathfrak{g}_{\Ad_{\rho_{2}}}\right)$
with its image under the natural inclusion $H^{1}\left(F_{g},\,\mathfrak{g}_{\Ad_{\rho_{2}}}\right)\subset H^{1}\left(\pi_{1},\,\mathfrak{g}_{\Ad_{\rho}}\right).$ 
\begin{lem}
\label{prop:G ss Period map omega zero implies omega zero} Let $\rho$
be a unitary and good strict Schottky representation. Consider $\omega\in H^{0}\left(X,\Ad\left(E_{\rho}\right)\otimes\Omega_{X}^{1}\right)$
such that $P_{\Ad_{\rho}}\left(\omega\right)\in H^{1}\left(F_{g},\mathfrak{g}_{\Ad_{\rho_{2}}}\right)$
(in particular, the component of $P_{\Ad_{\rho}}\left(\omega\right)$
in $\mathfrak{z}^{g}$ vanishes). Then $\omega=0$. In other words,
under the stated conditions: 
\[
H^{1}\left(F_{g},\,\mathfrak{g}_{\Ad_{\rho_{2}}}\right)\bigcap\mathrm{Im}P_{Ad_{\rho}}=0.
\]
\end{lem}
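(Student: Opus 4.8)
The plan is to combine the isometry property of the period map (Theorem~\ref{prop: Bilinear relations for differentials of pi1module}) with the vanishing of the pairing $\langle\!\langle\,,\,\rangle\!\rangle$ on free-group cohomology established in Proposition~\ref{prop:Schottky-lagrangian}, and then to exploit the positive-definiteness of the hermitian inner product of Definition~\ref{def 1st def hermitian inner prod}. Concretely, I would start from an element of the intersection $H^1\!\left(F_g,\mathfrak{g}_{\Ad_{\rho_2}}\right)\cap\mathrm{Im}\,P_{\Ad_\rho}$, write it as $P_{\Ad_\rho}(\omega)$ for some $\omega\in H^0\!\left(X,\Ad(E_\rho)\otimes\Omega_X^1\right)$ whose period class lies in $H^1\!\left(F_g,\mathfrak{g}_{\Ad_{\rho_2}}\right)$, and aim to conclude $\omega=0$.

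The first step is to apply Theorem~\ref{prop: Bilinear relations for differentials of pi1module} with $\omega_1=\omega_2=\omega$, giving
\[
(\omega,\omega)=\left\langle\!\left\langle P_{\Ad_\rho}(\omega),\,P_{\Ad_\rho}(\omega)\right\rangle\!\right\rangle.
\]
Next I would argue that the right-hand side vanishes. Since $\rho$ is strict, it factors as $\rho=\rho_2\circ\varphi$ through $\varphi:\pi_1\to F_g$, so the $\pi_1$-module $\mathfrak{g}_{\Ad_\rho}$ is the inflation of the $F_g$-module $\mathfrak{g}_{\Ad_{\rho_2}}$ and the inclusion $H^1\!\left(F_g,\mathfrak{g}_{\Ad_{\rho_2}}\right)\hookrightarrow H^1\!\left(\pi_1,\mathfrak{g}_{\Ad_\rho}\right)$ is exactly $\varphi^*$. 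As noted after its definition, $\langle\!\langle\,,\,\rangle\!\rangle$ coincides (up to the factor $i$) with the cup-product pairing \eqref{eq:cup-product}; by naturality of the cup product, $\varphi^*(a)\cup\varphi^*(b)=\varphi^*(a\cup b)$, and $a\cup b$ lives in $H^2\!\left(F_g,\mathfrak{g}_{\Ad_{\rho_2}}\right)=0$ because free groups have cohomological dimension one. This is precisely the Lagrangian computation of Proposition~\ref{prop:Schottky-lagrangian}, and it shows that $\langle\!\langle\,,\,\rangle\!\rangle$ restricts to the zero pairing on $H^1\!\left(F_g,\mathfrak{g}_{\Ad_{\rho_2}}\right)$. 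Hence $(\omega,\omega)=0$. Finally, writing $\omega=h(z)\,dz$ on the fundamental domain $D$, Definition~\ref{def 1st def hermitian inner prod} gives
\[
(\omega,\omega)=i\int_D\left\langle h,h\right\rangle\,dz\wedge d\bar z=2\int_D\left\langle h,h\right\rangle\,dx\,dy\geq 0,
\]
with equality only when $h\equiv 0$, since $\langle\,,\,\rangle$ is a positive hermitian form on $\mathfrak{g}$ and $i\,dz\wedge d\bar z$ is a positive area form. Thus $(\omega,\omega)=0$ forces $\omega=0$, and consequently the chosen class $P_{\Ad_\rho}(\omega)$ is zero, proving that the intersection is trivial.

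I do not anticipate a genuine obstacle here, as all the machinery is already in place; the delicate points are purely bookkeeping. One must confirm that the relevant inclusion really is the inflation $\varphi^*$ (so that naturality of the cup product legitimately produces a factor through $H^2(F_g,-)=0$, using strictness of $\rho$), and that the $K$-invariant hermitian form fixed in Section~\ref{sec:Surjectivity-of-the} is positive-definite, so that $(\,,\,)$ is a genuine norm rather than merely a pairing. Both hold in the present setting, so no additional estimate or construction is required beyond invoking Theorem~\ref{prop: Bilinear relations for differentials of pi1module} and Proposition~\ref{prop:Schottky-lagrangian}.
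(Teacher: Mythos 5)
Your proof is correct and takes essentially the same route as the paper: apply Theorem \ref{prop: Bilinear relations for differentials of pi1module} with $\omega_{1}=\omega_{2}=\omega$, observe that the self-pairing of $P_{\Ad_{\rho}}(\omega)$ factors through the cup product into $H^{2}\left(F_{g},\mathfrak{g}_{\Ad_{\rho_{2}}}\right)=0$, and then conclude $\omega=0$ from the hermitian inner product. You even tighten two points the paper glosses over: the naturality of the cup product under the inflation $\varphi^{*}$ (justifying why the self-cup-product lands in free-group cohomology), and the fact that it is positive-definiteness of $(\,,\,)$ — not the mere non-degeneracy invoked in the paper's wording — that forces $\omega=0$.
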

\begin{proof}
According to Theorem \ref{prop: Bilinear relations for differentials of pi1module},
the hermitian inner product of $\omega$ verifies \linebreak{}
 $\left(\omega,\,\omega\right)=\left\langle \!\left\langle P_{\Ad_{\rho}}\left(\omega\right),\,P_{\Ad_{\rho}}\left(\omega\right)\right\rangle \!\right\rangle .$
In this case the cup product of this class with itself is $P_{Ad_{\rho}}\left(\omega\right)\cup P_{Ad_{\rho}}\left(\omega\right)\in H^{2}\left(F_{g},\mathfrak{g}_{\Ad_{\rho_{2}}}\right).$
Since for a free group $F_{g}$, $H^{2}\left(F_{g},\,\mathfrak{g}_{\Ad_{\rho_{2}}}\right)=0$,
we obtain $P_{Ad_{\rho}}\left(\omega\right)\cup P_{Ad_{\rho}}\left(\omega\right)=0$
and by Theorem \ref{prop: Bilinear relations for differentials of pi1module},
$\omega=0$ since the Hermitian product is non-degenerate. 
\end{proof}
We can now prove our main result of this section (Theorem B of the
introduction). Let $\mathbf{V}_{s}:\mathbb{S}_{s}\rightarrow\mathcal{M}_{G}$
be the restriction of the Schottky moduli map (Definition \ref{def:Schottky-moduli-map})
to strict Schottky space. 
\begin{thm}
\label{thm: main thm}Let $\rho$ be a good and unitary Schottky representation,
and suppose that $[E_{\rho}]\in\mathcal{M}_{G}$ is a smooth point.
If $\rho$ is strict, the derivative of the Schottky moduli map, $d_{[\rho]}\mathbf{V}_{s}:T_{\left[\rho\right]}\mathbb{S}_{s}\rightarrow T_{\left[E_{\rho}\right]}\mathcal{M}_{G}$,
is an isomorphism. In the general case, the derivative of the Schottky
moduli map $\mathbf{V}:\mathbb{S}^{*}\to\mathcal{M}_{G}$ has maximal
rank at $[\rho]$. In particular, $\mathbf{V}$ is a local submersion
so that, locally around $[\rho]$, it is a projection with $\dim\left(\mathbf{V}^{-1}\left(\left[E_{\rho}\right]\right)\right)=g\dim Z^{\circ}$. \end{thm}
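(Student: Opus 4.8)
The plan is to reduce the statement to a single intersection computation inside $T_{[\rho]}\mathbb B\cong H^1(\pi_1,\mathfrak g_{\Ad_\rho})$. Theorem~\ref{thm:kernel of Schottky map} together with Proposition~\ref{PeriodMapEQdiFQrho} (which identifies $d_0Q_\rho$ with the period map $P_{\Ad_\rho}$) gives $\ker d_{[\rho]}\mathbf V\cong T_{[\rho]}\mathbb S\cap\mathrm{Im}\,P_{\Ad_\rho}$, so everything comes down to locating $\mathrm{Im}\,P_{\Ad_\rho}$ relative to the Schottky tangent space described in \eqref{eq:goodtangentspaces}. Throughout I would use that, by the bilinear relations of Theorem~\ref{prop: Bilinear relations for differentials of pi1module}, the Hermitian form satisfies $(\omega,\omega)=\langle\!\langle P_{\Ad_\rho}(\omega),P_{\Ad_\rho}(\omega)\rangle\!\rangle$; positivity of $(\,\cdot\,,\cdot\,)$ makes $P_{\Ad_\rho}$ injective, so $\dim\mathrm{Im}\,P_{\Ad_\rho}=\dim H^0(X,\Ad E_\rho\otimes\Omega^1_X)=(g-1)\dim G+\dim Z$ (the last equality by Serre duality and Riemann--Roch for the self-dual bundle $\Ad E_\rho$, using $H^0(X,\Ad E_\rho)\cong\mathfrak z$ since $\rho$ is good).

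In the \emph{strict} case, $T_{[\rho]}\mathbb S_s\cong H^1(F_g,\mathfrak g_{\Ad_{\rho_2}})$, so Lemma~\ref{prop:G ss Period map omega zero implies omega zero} gives immediately $\ker d_{[\rho]}\mathbf V_s\cong H^1(F_g,\mathfrak g_{\Ad_{\rho_2}})\cap\mathrm{Im}\,P_{\Ad_\rho}=0$, i.e. $d_{[\rho]}\mathbf V_s$ is injective. Since $\dim\mathbb S_s=(g-1)\dim G+\dim Z$ by Proposition~\ref{Thm strict Schotky fst coh grp} and, at the smooth point $[E_\rho]$, $\dim T_{[E_\rho]}\mathcal M_G=\dim H^1(X,\Ad E_\rho)=(g-1)\dim G+\dim Z$, an injective map between equidimensional spaces is an isomorphism, settling the strict case.

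For the general case the extra kernel directions must be pinned down. I would split $\mathfrak g=\mathfrak z\oplus\mathfrak s$, $\mathfrak s=[\mathfrak g,\mathfrak g]=\mathsf{Lie}(DG)$; as $\Ad$ is trivial on $\mathfrak z$ this is a decomposition of $\pi_1$- (and $F_g$-) modules, compatibly splitting $H^0(X,\Ad E_\rho\otimes\Omega^1_X)$, $H^1(\pi_1,\mathfrak g_{\Ad_\rho})$ and $P_{\Ad_\rho}=P^{\mathfrak z}\oplus P^{\mathfrak s}$. On the semisimple block, $\Ad_\rho$ depends only on $\rho_2$ (the centre being killed), so it equals the block of the strictification $\rho'$ (with $\rho'(\alpha_i)=e$), which is again good and unitary by Proposition~\ref{prop:good Schottky as rho1 good }; Lemma~\ref{prop:G ss Period map omega zero implies omega zero} applied to $\rho'$ then yields $H^1(F_g,\mathfrak s_{\Ad_{\rho_2}})\cap\mathrm{Im}\,P^{\mathfrak s}=0$, so the semisimple part contributes nothing. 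On the central block, $P^{\mathfrak z}=\mathrm{id}_{\mathfrak z}\otimes P_0$, where $P_0\colon H^0(X,\Omega^1_X)\to H^1(X,\mathbb C)\cong H^1(\pi_1,\mathbb C)$ is the classical period map, injective with image $H^{1,0}(X)$ by Hodge theory. Since the central part of $T_{[\rho]}\mathbb S$ (the $\alpha$-directions $\mathfrak z^g$ of $T_{\rho_1}(Z^g)$ together with the central $\beta$-directions inside $H^1(F_g,\mathfrak z)$) fills all of $H^1(\pi_1,\mathfrak z)$, the full central image $\mathfrak z\otimes H^{1,0}(X)$ lies in the intersection. Hence $\ker d_{[\rho]}\mathbf V\cong T_{[\rho]}\mathbb S\cap\mathrm{Im}\,P_{\Ad_\rho}=\mathfrak z\otimes H^{1,0}(X)$ has dimension $g\dim Z$.

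To finish, I would compare dimensions once more: with $\dim\mathbb S=(g-1)\dim G+(g+1)\dim Z$ (Corollary~\ref{cor: dimension and iso of smooth Schottky rep }) and $\dim\ker d_{[\rho]}\mathbf V=g\dim Z$, the rank of $d_{[\rho]}\mathbf V$ is $(g-1)\dim G+\dim Z=\dim T_{[E_\rho]}\mathcal M_G$; equivalently, using $\dim T_{[\rho]}\mathbb B=(2g-2)\dim G+2\dim Z$ (Proposition~\ref{prop:dim and iso representations}) one checks $T_{[\rho]}\mathbb S+\mathrm{Im}\,P_{\Ad_\rho}=T_{[\rho]}\mathbb B$, so $d_{[\rho]}\mathbf E$ carries $T_{[\rho]}\mathbb S$ onto $T_{[E_\rho]}\mathcal M_G$. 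Thus $d_{[\rho]}\mathbf V$ has maximal rank, $\mathbf V$ is a local submersion, and the local fibre is smooth of dimension $\dim\ker d_{[\rho]}\mathbf V=g\dim Z=g\dim Z^\circ$. The delicate point, and the main obstacle, is the central block: one must keep the central and semisimple summands cleanly separated across $P_{\Ad_\rho}$, $T_{[\rho]}\mathbb S$ and $T_{[\rho]}\mathbb B$, identify $P^{\mathfrak z}$ with the Hodge-theoretic period map, and verify that the central Schottky directions span $H^1(\pi_1,\mathfrak z)$ so that the whole of $\mathfrak z\otimes H^{1,0}(X)$ — and nothing from the semisimple side — survives in the kernel.
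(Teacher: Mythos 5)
Your strict case is the paper's own argument, step for step: the kernel is $H^{1}\left(F_{g},\mathfrak{g}_{\Ad_{\rho_{2}}}\right)\cap\mathrm{Im}\,P_{\Ad_{\rho}}$, which vanishes by Lemma \ref{prop:G ss Period map omega zero implies omega zero}, and injectivity upgrades to an isomorphism by equality of dimensions (the paper quotes Ramanathan for $\dim\mathcal{M}_{G}=(g-1)\dim G+\dim Z$, where you re-derive it via Serre duality and Riemann--Roch from $H^{0}(X,\Ad E_{\rho})\cong\mathfrak{z}$; both are valid). In the general case, however, you take a genuinely different route. The paper decomposes $T_{[\rho]}\mathbb{S}\cong\mathfrak{z}^{g}\oplus T_{[\rho_{2}]}\mathbb{S}_{s}$ and argues that, since $d_{[\rho_{2}]}\mathbf{V}_{s}$ is an isomorphism by the strict case, $d_{[\rho]}\mathbf{V}$ is already surjective on the strict directions, and then reads off $\dim\ker d_{[\rho]}\mathbf{V}=g\dim Z^{\circ}$ from rank--nullity and Corollary \ref{cor: dimension and iso of smooth Schottky rep }. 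You instead compute the kernel exactly at the single basepoint $[\rho]$: split $\mathfrak{g}=\mathfrak{z}\oplus\mathfrak{s}$ as $\pi_{1}$-modules, so that $\Ad E_{\rho}$ and $P_{\Ad_{\rho}}=P^{\mathfrak{z}}\oplus P^{\mathfrak{s}}$ split compatibly; kill the $\mathfrak{s}$-block by applying Lemma \ref{prop:G ss Period map omega zero implies omega zero} to the strictification $\rho'$ (legitimate, since $\Ad_{\rho}=\Ad_{\rho'}$ on $\mathfrak{s}$, so the semisimple summands of the adjoint bundles agree, and $\rho'$ is good and unitary by Proposition \ref{prop:good Schottky as rho1 good }); identify $P^{\mathfrak{z}}$ with $\mathrm{id}_{\mathfrak{z}}\otimes$ the classical period map; and verify that the central Schottky directions fill $H^{1}(\pi_{1},\mathfrak{z})$ — true, since the action on $\mathfrak{z}$ is trivial, cocycles are homomorphisms freely determined on the $2g$ generators, and the $\mathfrak{z}^{g}$-directions (values on the $\alpha_{i}$) do satisfy the surface relation because central vectors are $\Ad$-invariant. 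This gives $\ker d_{[\rho]}\mathbf{V}=\mathfrak{z}\otimes H^{1,0}(X)$ of dimension $g\dim Z=g\dim Z^{\circ}$, with surjectivity then falling out of the dimension count. What each approach buys: the paper's reduction is shorter, but it silently identifies the restriction of $d_{[\rho]}\mathbf{V}$ to the strict subspace with the derivative $d_{[\rho_{2}]}\mathbf{V}_{s}$ taken at the \emph{different} basepoint $[\rho_{2}]$ (the bundles $E_{\rho}$ and $E_{\rho_{2}}$ differ by a central twist), a step that strictly speaking needs justification — for instance via the kernel characterization of Theorem \ref{thm:kernel of Schottky map} together with the observation that Theorem \ref{prop: Bilinear relations for differentials of pi1module} applies to any good unitary representation. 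Your single-basepoint computation avoids this issue entirely, and in addition identifies the kernel concretely as the central Hodge piece, which explains geometrically why the fibre has dimension $g\dim Z^{\circ}$ and matches the Jacobian picture of the abelian case in Section \ref{chap:Particular-cases}; the price is the extra module-theoretic bookkeeping across $P_{\Ad_{\rho}}$, $T_{[\rho]}\mathbb{S}$ and $T_{[\rho]}\mathbb{B}$, all of which you carry out, or explicitly flag, correctly.
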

\begin{proof}
In the case $\rho$ is strict, 
\[
\ker d_{[\rho]}\mathbf{V}_{s}\cong H^{1}\left(F_{g},\,\mathfrak{g}_{\Ad_{\rho_{2}}}\right)\bigcap\mathrm{Im}\left(P_{\Ad_{\rho}}\right)
\]
and by Lemma \ref{prop:G ss Period map omega zero implies omega zero},
$\dim\ker d_{[\rho]}\mathbf{V}_{s}=0.$ Since, by Theorem \ref{Thm strict Schotky fst coh grp},
$\dim T_{\left[\rho\right]}\mathbb{S}_{s}=\left(g-1\right)\dim G+\dim Z$
and by \cite[Theorem 5.9]{Ramanathan2}, $\dim\mathcal{M}_{G}=\left(g-1\right)\dim G+\dim Z$,
thus 
\[
\dim T_{\left[\rho\right]}\mathbb{S}_{s}=\dim\mathcal{M}_{G},
\]
and we conclude that $d_{[\rho]}\mathbf{V}_{s}$ is an isomorphism
at $[\rho]$, where $\rho$ is a good and unitary strict Schottky
representation.

In the general case, by (\ref{eq:goodtangentspaces}), we have $T_{[\rho]}\mathbb{S}\cong\mathfrak{z}^{g}\oplus T_{[\rho_{2}]}\mathbb{S}_{s}$,
where $\rho_{2}$ is a good and unitary strict Schottky representation.
The tangent space $T_{[\rho_{2}]}\mathbb{S}_{s}$ can be identified
with a subspace of $T_{[\rho]}\mathbb{S}$. By the previous case,
$d_{[\rho_{2}]}\mathbf{V}$ is an isomorphism, so if we take as domain
$T_{[\rho]}\mathbb{S}$, $d_{[\rho]}\mathbf{V}$ remains surjective
with $\dim\ker d_{[\rho]}\mathbf{V}=g\dim Z^{\circ}$, because by
Corollary \ref{cor: dimension and iso of smooth Schottky rep } $\dim T_{[\rho]}\mathbb{S}=\dim T_{[\rho_{2}]}\mathbb{S}_{s}+g\dim Z^{\circ}$. \end{proof}
\begin{rem}
If $\rho$ is a unitary representation of $\hom(\pi_{1},G)$, the
corresponding $G$-bundle is semistable, by the main result in Ramanathan
\cite{Ramanathan1}. Assuming that $g\geq3$ and $\rho$ is good and
unitary, then $[E_{\rho}]$ is stable and smooth in $\mathcal{M}_{G}$,
by Biswas-Hoffmann \cite[Lemma 2.2]{BiswasHofmannTorelli}. 
\end{rem}
In the case $G$ is semisimple, the previous theorem implies the following. 
\begin{cor}
\label{cor:maintheorem} Let $G$ be semisimple. Then, at a good and
unitary Schottky representation $\rho$, the derivative of the Schottky
map, $d_{[\rho]}\mathbf{V}\,:\,T_{\left[\rho\right]}\mathbb{S}\rightarrow T_{\left[E_{\rho}\right]}\mathcal{M}_{G}$,
is an isomorphism. \end{cor}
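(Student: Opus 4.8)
The plan is to deduce this directly from Theorem \ref{thm: main thm} (Theorem B), with the only new input being the elementary observation that semisimplicity of $G$ forces its center $Z$ to be \emph{finite}. Consequently the connected component of the identity $Z^{\circ}$ is trivial, and $\dim Z = \dim Z^{\circ} = 0$. So the entire content is already packaged in Theorem \ref{thm: main thm}; the corollary merely records the simplification produced by $Z^{\circ}=\{e\}$.

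First I would invoke Theorem \ref{thm: main thm}, whose hypotheses (a good and unitary Schottky representation $\rho$ with $[E_{\rho}]$ a smooth point of $\mathcal{M}_G$) are exactly those assumed here. That theorem asserts that $d_{[\rho]}\mathbf{V}$ has maximal rank, hence is surjective, and that the generic fiber has dimension $\dim \mathbf{V}^{-1}([E_\rho]) = g\,\dim Z^{\circ}$. Substituting $\dim Z^{\circ}=0$ collapses the fiber to a point, so that $\ker d_{[\rho]}\mathbf{V} = 0$ and the derivative is injective. Combining injectivity with the surjectivity already furnished by Theorem \ref{thm: main thm} then yields that $d_{[\rho]}\mathbf{V}\colon T_{[\rho]}\mathbb{S}\to T_{[E_{\rho}]}\mathcal{M}_G$ is an isomorphism.

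As an independent consistency check, I would compare dimensions. By Corollary \ref{cor: dimension and iso of smooth Schottky rep }, $\dim T_{[\rho]}\mathbb{S} = (g-1)\dim G + (g+1)\dim Z = (g-1)\dim G$ once $Z$ is finite, while $\dim \mathcal{M}_G = (g-1)\dim G + \dim Z = (g-1)\dim G$ by \cite[Theorem 5.9]{Ramanathan2}. The source and target therefore have equal dimension, so a surjective (equivalently, injective) linear map between them is automatically bijective; at the level of tangent spaces this is also transparent from the decomposition $T_{[\rho]}\mathbb{S}\cong\mathfrak{z}^{g}\oplus T_{[\rho_2]}\mathbb{S}_s$ in \eqref{eq:goodtangentspaces}, since $\mathfrak{z}=\lie(Z)=0$ collapses the general case onto the strict case, where Theorem \ref{thm: main thm} already gives an isomorphism. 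There is essentially no obstacle to overcome: the only thing to verify carefully is the harmless remark that semisimple groups have $0$-dimensional center, after which the conclusion is immediate from Theorem B.
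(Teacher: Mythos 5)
Your proposal is correct and follows essentially the same route as the paper's own proof: invoke Theorem \ref{thm: main thm}, observe that semisimplicity gives $\dim Z=\dim Z^{\circ}=0$ so the kernel of $d_{[\rho]}\mathbf{V}$ vanishes, and match $\dim T_{[\rho]}\mathbb{S}=(g-1)\dim G=\dim\mathcal{M}_{G}$ via Corollary \ref{cor: dimension and iso of smooth Schottky rep } and \cite[Theorem 5.9]{Ramanathan2}. The only caveat, shared by the paper itself, is that the corollary tacitly carries over the smoothness hypothesis on $[E_{\rho}]$ from Theorem \ref{thm: main thm} (guaranteed for $g\geq3$ by the remark citing \cite{BiswasHofmannTorelli}), so your phrase that the hypotheses are ``exactly'' those of the corollary is harmlessly imprecise.
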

\begin{proof}
First of all notice that the dimension of both spaces is the same.
Indeed, since $G$ is semisimple, $\dim Z=0$. Moreover, applying
Corollary \ref{cor: dimension and iso of smooth Schottky rep } to
$T_{\left[\rho\right]}\mathbb{S}$ we get $\dim T_{\left[\rho\right]}\mathbb{S}=\left(g-1\right)\dim G$
and by \cite[Theorem 5.9]{Ramanathan2}, $\dim\mathcal{M}_{G}=\left(g-1\right)\dim G$.
By Theorem \ref{thm: main thm}, $\ker d_{[\rho]}\mathbf{V}=0$, so
the result follows. 
\end{proof}

\section{Some Special Classes of Schottky Bundles\label{chap:Particular-cases}}

In this section, we consider two special classes of Schottky $G$-bundles
over a compact Riemann surface $X$: the case when $G$ is a connected
algebraic torus (over a general $X$); and general $G$-bundles over
an elliptic curve ($X$ has genus 1). Recall that, by slight abuse
of terminology, we say that a bundle is flat if it admits a holomorphic
flat connection.

\subsection{Abelian Schottky $G$-bundles}

Let $G$ be a complex connected algebraic torus. Then, it is well
known that $G$ is isomorphic to $(\mathbb{C}^{*})^{n}$, for some
$n\in\mathbb{N}$. So, we fix $G=(\mathbb{C}^{*})^{n}$, and note
that, in this situation, Schottky spaces are smooth varieties for
any $g$. Indeed, the space of strict Schottky representations becomes
\[
\mathbb{S}_{s}=\Hom\left(F_{g},\,(\mathbb{C}^{*})^{n}\right)\cong(\mathbb{C}^{*})^{ng}
\]
and $\mathbb{S}=\Hom\left(F_{g},\,(\mathbb{C}^{*})^{n}\times(\mathbb{C}^{*})^{n}\right)\cong(\mathbb{C}^{*})^{2ng}$.
We now generalize the result of \cite{Florentino}, stating that all
flat line bundles are strict Schottky $\mathbb{C}^{*}$-bundles. 
\begin{prop}
\label{prop: C star}Let $E$ be a $(\mathbb{C}^{*})^{n}$-bundle
over a compact Riemann surface $X$. Then $E$ is a \emph{strict}
Schottky bundle if and only if it is flat. \end{prop}
\begin{proof}
If $E$ is Schottky then it is induced by a representation, so $E$
is flat, by definition. Assume now that $E$ is a flat $G$-bundle,
with $G=(\mathbb{C}^{*})^{n}$. As in Proposition \ref{prop:product-bundles},
we can view $E$ as an ordered $n$-tuple of $\mathbb{C}^{*}$-bundles
$(E_{1},\cdots,E_{n})$, and then each $E_{i}$ admits a flat connection.
On the other hand, it is well known that $\mathbb{C}^{*}$-bundles
are equivalent to line bundles, i.e., vector bundles of rank one.
So, each $E_{i}$ is a line bundle of degree zero (since $E_{i}$
is flat). According to \cite{Florentino}, every line bundle with
degree $0$ is a Schottky vector bundle, that is, a \emph{strict}
Schottky $\mathbb{C}^{*}$-bundle. So, this implies that $E_{i}$
is strict Schottky for every $i=1,\cdots,n$. Hence, by Proposition
\ref{prop:product-bundles}, $E$ is also a strict Schottky bundle. \end{proof}
\begin{rem}
(1) Replacing $(\mathbb{C}^{*})^{n}$ by an arbitrary reductive abelian
group $G$, not necessarily connected, one can show that the previous
result is still valid. \\
 (2) It has been shown in \cite{FlorentinoLudsteck} that unipotent
bundles (arising from successive extensions of $\mathbb{C}^{*}$-bundles)
are also Schottky, and in fact, there is an equivalence of categories
between flat unipotent bundles over $X$, and unipotent representations
of free groups. 
\end{rem}
For $G=(\mathbb{C}^{*})^{n}$, it is well known that all $G$-bundles,
considered as (ordered) $n$-tuples of line bundles, are semistable.
Thus, the moduli space of semistable $(\mathbb{C}^{*})^{n}$-bundles
coincides with the space of all $(\mathbb{C}^{*})^{n}$-bundles: 
\[
\mathcal{M}_{(\mathbb{C}^{*})^{n}}\cong H^{1}(X,\,(\mathcal{O}_{X}^{*})^{n})\cong H^{1}(X,\mathcal{O}_{X}^{*})^{n}.
\]
It is well known that this sits in an exact sequence 
\[
H^{1}(X,\mathcal{O}_{X})^{n}\to H^{1}(X,\mathcal{O}_{X}^{*})^{n}\to\mathbb{Z}^{n},
\]
whose last morphism is the multi-degree, or first Chern class. So,
the space of flat $(\mathbb{C}^{*})^{n}$-bundles coincides with the
kernel of the degree map, that is, with 
\[
(H^{1}(X,\mathcal{O}_{X}^{*})^{n})^{0}\cong J(X)^{n},
\]
where $J(X)$ is the Jacobian of $X$. In this context the strict
Schottky moduli map looks as follows 
\[
\mathbf{V}_{s}:\Hom\left(F_{g},\left(\mathbb{C}^{*}\right)^{n}\right)\to J(X)^{n},
\]
and Proposition \ref{prop: C star} implies that $\mathbf{V}_{s}$
is onto (then, of course $\mathbf{V}:\mathbb{S}\to J(X)^{n}$). Also
note that $\dim\mathbb{S}_{s}=\dim J\left(X\right)^{n}=ng$. So this
description reproduces the line bundle case, for $n=1$, treated in
\cite{Florentino}.

\subsection{Schottky $G$-bundles over elliptic curves\label{sec:Semistability over elliptic curves}}

{} In this section, we consider principal Schottky bundles over an elliptic
curve $X$, the case $g=1$, which was excluded in previous sections\footnote{Note that the case $X=\mathbb{P}^{1}$ ($g=0$) is irrelevant, as
$\pi_{1}$ is trivial and so are Schottky representations.}. Firstly, we consider the case of vector bundles over an elliptic
curve and recall some results relating flat connections, semistability
and the Schottky property. Then, we relate $G$-bundles with the corresponding
adjoint bundle in order to translate some of the previous properties
to this case.

We begin by recalling the following theorem, due to Atiyah and Tu
\cite{Atiyah,Tu}, which relates semistability with the indecomposable
property. 
\begin{thm}
\cite{Tu}\label{AtiyahTu thm} Every indecomposable vector bundle
over an elliptic curve is semistable; it is stable if and only if
its rank and degree are relatively prime. 
\end{thm}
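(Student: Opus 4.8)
The plan is to treat the two assertions separately: first I would derive semistability of every indecomposable bundle from the triviality of the canonical bundle on an elliptic curve $X$, and then I would pin down stability by a numerical argument combined with Atiyah's structure theory. Throughout, write $\mu(E)=\deg(E)/\mathrm{rk}(E)$ for the slope. The key elementary input is that, since $K_X\cong\mathcal{O}_X$, Serre duality furnishes a natural isomorphism $\mathrm{Ext}^1(A,B)\cong\mathrm{Hom}(B,A)^{*}$ for all vector bundles $A,B$ on $X$.

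For the first assertion I would argue by contradiction using the Harder--Narasimhan filtration. Suppose $E$ is indecomposable but not semistable, and let $E_1\subset E$ be its maximal destabilizing subbundle, so that $E_1$ is semistable with $\mu(E_1)=\mu_{\max}(E)>\mu(E)$, while the quotient $Q=E/E_1$ has all its Harder--Narasimhan slopes strictly below $\mu(E_1)$. The extension class of
\[
0\to E_1\to E\to Q\to 0
\]
lies in $\mathrm{Ext}^1(Q,E_1)$. Filtering $Q$ by its own Harder--Narasimhan filtration and dévissaging through the long exact sequences, it suffices to show $\mathrm{Ext}^1(S,E_1)=0$ for every semistable subquotient $S$ of $Q$. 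Each such $S$ has $\mu(S)<\mu(E_1)$, so $\mathrm{Hom}(E_1,S)=0$ (a nonzero map between semistable bundles forces the slope of the source to be at most that of the target), and Serre duality gives $\mathrm{Ext}^1(S,E_1)\cong\mathrm{Hom}(E_1,S)^{*}=0$. Hence the sequence splits, $E\cong E_1\oplus Q$ with $0<\mathrm{rk}(E_1)<\mathrm{rk}(E)$, contradicting indecomposability; therefore $E$ is semistable.

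Now set $r=\mathrm{rk}(E)$, $d=\deg(E)$ and $h=\gcd(r,d)$. For the direction $\gcd(r,d)=1\Rightarrow E$ stable, I would note that semistability already forces strictness: any proper subbundle $F$ with $\mu(F)=\mu(E)=d/r$ would satisfy $\deg(F)=d\,\mathrm{rk}(F)/r$, whence $r\mid \mathrm{rk}(F)$ because $\gcd(r,d)=1$, which is impossible for $0<\mathrm{rk}(F)<r$. Thus every proper subbundle has $\mu(F)<\mu(E)$ and $E$ is stable. This is a purely numerical step, combined with the semistability established above.

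For the converse I would prove the contrapositive, that $h>1$ implies $E$ is \emph{not} stable, and this is the step I expect to be the main obstacle, since it is precisely where Atiyah's classification of indecomposable bundles over an elliptic curve (\cite{Atiyah}) is indispensable. Writing $r=hr_0$, $d=hd_0$ with $\gcd(r_0,d_0)=1$, Atiyah's theory provides for each $h\geq 1$ the distinguished indecomposable bundle $F_h$ of rank $h$ and degree $0$, built from iterated nonsplit self-extensions of $\mathcal{O}_X$, which contains $\mathcal{O}_X$ as a subbundle of slope $0=\mu(F_h)$; and it identifies every indecomposable bundle of rank $r$ and degree $d$ with $W\otimes F_h$ for a unique stable bundle $W$ of rank $r_0$ and degree $d_0$. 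Granting this, $E\cong W\otimes F_h$ contains $W\otimes\mathcal{O}_X\cong W$ as a proper subbundle (proper since $\mathrm{rk}(W)=r_0<r_0h=r$ as $h>1$) of slope $\mu(W)=d_0/r_0=d/r=\mu(E)$. Hence $E$ admits a proper subbundle of equal slope and is strictly semistable, not stable. Combining both directions with the semistability already proved yields the stated equivalence. The crux is therefore the establishment of Atiyah's bijection $E\cong W\otimes F_h$, for which the self-extension construction of $F_h$, the local nature of its endomorphism ring, and the resulting indecomposability of $W\otimes F_h$ must all be verified; the remaining numerical and cohomological steps are routine.
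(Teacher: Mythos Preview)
The paper does not give its own proof of this statement; it is simply recalled from \cite{Tu} (and attributed to Atiyah and Tu) and used as a black box for the subsequent results on elliptic curves. Your argument is correct and in fact follows essentially the line of Tu's original proof: semistability via the Harder--Narasimhan filtration together with Serre duality and the triviality of $K_X$, and the stability characterization via the elementary numerical step for $\gcd(r,d)=1$ combined with Atiyah's structure theorem $E\cong W\otimes F_h$ for the converse.
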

To relate flatness with semistability we now use Weil's theorem \cite[Theorem 10]{Weil},
which states that a vector bundle is flat if and only all its indecomposable
components have degree zero. 
\begin{prop}
\label{flat semistable} Let $V$ be a vector bundle over an elliptic
curve $X$. Then, $V$ is flat if and only if $V$ is semistable of
degree zero. \end{prop}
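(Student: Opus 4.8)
The plan is to reduce the statement to the indecomposable summands of $V$ and then feed those into the two results already quoted: Theorem \ref{AtiyahTu thm} (Atiyah--Tu), which gives semistability of indecomposable bundles over an elliptic curve, and Weil's theorem \cite{Weil}, which characterizes flatness by the vanishing of the degrees of all the indecomposable components. First I would invoke the Krull--Schmidt decomposition, valid for vector bundles over a projective curve, to write $V=\bigoplus_{i=1}^{k}V_{i}$ with each $V_{i}$ indecomposable, and record the notation $d_{i}:=\deg V_{i}$ and $\deg V=\sum_{i}d_{i}$, with each $\mathrm{rk}\,V_{i}>0$ so that the slopes $\mu(V_{i})=d_{i}/\mathrm{rk}\,V_{i}$ are well defined.

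For the implication that flatness yields semistability of degree zero, I would argue as follows. If $V$ is flat, Weil's theorem gives $d_{i}=0$ for every $i$, so $\deg V=\sum_{i}d_{i}=0$. By Theorem \ref{AtiyahTu thm} each $V_{i}$, being indecomposable, is semistable, and since $d_{i}=0$ it has slope $\mu(V_{i})=0$. As a finite direct sum of semistable bundles sharing a common slope is again semistable of that slope, $V$ is semistable of slope $0$, i.e.\ semistable of degree zero.

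For the converse, the key observation I would use is that each indecomposable summand $V_{i}$ is in particular a subbundle of $V$. If $V$ is semistable with $\deg V=0$, then semistability at slope $0$ forces $\mu(V_{i})\le\mu(V)=0$, i.e.\ $d_{i}\le 0$, for every $i$. But the $d_{i}$ are non-positive integers summing to $\deg V=0$, so each $d_{i}$ must vanish. Thus every indecomposable component of $V$ has degree zero, and Weil's theorem then gives that $V$ is flat, completing the equivalence.

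I expect no genuine obstacle here: once the two cited theorems and the Krull--Schmidt decomposition are granted, the argument is essentially formal. The only mildly delicate point is the standard lemma that a direct sum of semistable bundles of a common slope $\mu$ is semistable, which I would justify by the mediant (seesaw) property of slopes: for a subbundle $W$ of such a sum, intersecting $W$ with one factor and projecting to the complementary factors exhibits $W$ as an extension of two bundles each of slope $\le\mu$, whence $\mu(W)\le\mu$; since this holds for all $W$, the direct sum is semistable of slope $\mu$.
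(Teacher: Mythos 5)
Your proof is correct and takes essentially the same route as the paper: the Krull--Remak--Schmidt decomposition into indecomposables, Weil's theorem to characterize flatness via the degrees $d_{i}$ of the summands, and Theorem \ref{AtiyahTu thm} to get semistability of each summand, followed by the standard lemma on sums of semistable bundles of equal slope. The only (cosmetic) differences are that you argue the converse directly (semistability forces $d_{i}\le 0$, and non-positive integers summing to zero all vanish) where the paper argues by contradiction, and that you supply a seesaw justification for the direct-sum lemma that the paper merely asserts.
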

\begin{proof}
By the Krull-Remak-Schmidt Theorem, we can write $V$ as a direct
sum of indecomposable subbundles 
\[
V=\oplus_{i=1}^{n}V_{i}.
\]
Suppose that $V$ is flat. By Weil's theorem mentioned above, $\deg(V_{i})=0$
and, by Theorem \ref{AtiyahTu thm}, each one of $V_{i}$'s are semistable.
Since the sum of semistable vector bundles of the same slope ($\mu(V_{i})=\deg(V_{i})/\mbox{rk}(V_{i})=0$)
is semistable (of the same slope), $V$ is semistable with $\deg(V)=0$.
Conversely, let $V$ be semistable of degree 0. Then $0=\deg(V)=\sum\deg(V_{i})$,
and if some $V_{i}$ has degree $\deg(V_{i})\neq0$ then, at least,
there is one $V_{j}$ with $\deg(V_{j})>0=\deg(V)$. By definition,
this contradicts the hypothesis that $V$ is semistable. Therefore,
all of these $V_{i}$'s have degree zero which implies, by \cite[Theorem 10]{Weil},
that every summand $V_{i}$ is flat. Since a direct sum of flat bundles
admits a natural flat connection, $V$ is itself flat. 
\end{proof}
In \cite[Thm. 6]{Florentino}, it is shown that all flat vector bundles
over elliptic curves are Schottky. By considering adjoint bundles,
we now establish similar conclusions for $G$-bundles over elliptic
curves. 
\begin{prop}
\label{propssflat} Let $X$ be an elliptic curve, $G$ a connected
reductive algebraic group and $E$ a $G$-bundle over $X$. Then the
following are equivalent:
\begin{enumerate}
\item $E$ is semistable; 
\item $\mathrm{Ad}(E)$ is semistable; 
\item $\mathrm{Ad}(E)$ is flat. 
\end{enumerate}
If $G$ is semisimple, then all conditions above are equivalent to:

\emph{$\ $(4)} $E$ is flat. \end{prop}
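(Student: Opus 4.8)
The plan is to use the vector bundle $\Ad(E)$ as the bridge between the principal bundle $E$ and the flatness criterion of Proposition \ref{flat semistable}, which applies only to vector bundles. The first and crucial observation is that $\Ad(E)$ always has degree zero, regardless of $E$. Indeed, the determinant character $\det\circ\Ad:G\to\mathbb{C}^{*}$ is trivial: for every $X\in\mathfrak{g}$ one has $\mathrm{tr}(\ad X)=0$, since $\ad X=0$ on the centre and $\mathrm{tr}(\ad[Y,Z])=\mathrm{tr}[\ad Y,\ad Z]=0$ shows $\ad$ is traceless on $[\mathfrak{g},\mathfrak{g}]$; hence $\det(\Ad(\exp X))=\exp(\mathrm{tr}\,\ad X)=1$, and the connected group $G$ is generated by exponentials. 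Therefore the line bundle $\det\Ad(E)$, obtained from $E$ by extension of structure group along $\det\circ\Ad$, is trivial, so by the functoriality of the topological type in Equation \eqref{eq:extension} we conclude $\deg\Ad(E)=0$.

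Granting this, the equivalence $(2)\Leftrightarrow(3)$ is immediate: applying Proposition \ref{flat semistable} to the vector bundle $V=\Ad(E)$, flatness of $\Ad(E)$ is equivalent to its being semistable of degree zero; since the degree is automatically zero, this reduces to semistability alone. For $(1)\Leftrightarrow(2)$ I would invoke the standard principle that a principal $G$-bundle over a smooth projective curve is semistable if and only if its adjoint bundle $\Ad(E)$ is semistable. The direction ``$\Ad(E)$ semistable $\Rightarrow$ $E$ semistable'' is the easy one: a destabilising reduction of $E$ to a parabolic $P$ produces, on $\Ad(E)$, the subbundle modelled on $\mathrm{Lie}(P)$ of positive degree, contradicting semistability of $\Ad(E)$. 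The converse, that semistability of $E$ passes to $\Ad(E)$, is the content of the Ramanan--Ramanathan theorem in characteristic zero (the adjoint representation sends the centre of $G$ to the identity, so the slope condition is preserved under the associated bundle construction).

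Finally, suppose $G$ is semisimple; I claim $(3)\Leftrightarrow(4)$. If $E$ is flat, then its associated bundle $\Ad(E)=E\times_{\Ad}\mathfrak{g}$ inherits a flat connection, giving $(4)\Rightarrow(3)$. For the converse I would use exactly the ingredient already employed in the proof of Theorem \ref{GSemisimpleSchottky-1}: by \cite[Proposition 2.2]{AzadBiswas}, for semisimple $G$ the flatness of $\Ad(E)$ forces the flatness of $E$, which is $(3)\Rightarrow(4)$. Combining this with $(1)\Leftrightarrow(2)\Leftrightarrow(3)$ closes the chain, and note that the elliptic-curve hypothesis enters only through Proposition \ref{flat semistable} (which in turn rests on Weil's theorem and on Atiyah--Tu, Theorem \ref{AtiyahTu thm}).

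The step I expect to be the main obstacle is the equivalence $(1)\Leftrightarrow(2)$, since it is the only genuinely nontrivial statement purely about principal bundles: the degree computation and $(2)\Leftrightarrow(3)$ are formal once Proposition \ref{flat semistable} is in hand, and the semisimple addendum is a direct citation. In writing this out I would need to pin down the precise reference for the preservation of semistability under the adjoint representation in characteristic zero, as that is where the real content of $(1)\Leftrightarrow(2)$ resides.
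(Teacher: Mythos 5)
Your proposal is correct and takes essentially the same route as the paper: the paper establishes $(1)\Leftrightarrow(2)$ by citing \cite[Proposition 2.10]{AnchoucheBiswas} (the same fact you attribute to Ramanan--Ramanathan), gets $(2)\Leftrightarrow(3)$ from Proposition \ref{flat semistable}, and gets $(3)\Leftrightarrow(4)$ for semisimple $G$ from \cite[Proposition 2.2]{AzadBiswas}, exactly as you do. Your explicit verification that $\deg\Ad(E)=0$ (via triviality of $\det\circ\Ad$) usefully fills in a step the paper leaves implicit when applying Proposition \ref{flat semistable}.
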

\begin{proof}
\cite[Proposition 2.10]{AnchoucheBiswas} states that $E$ is semistable
if and only if $\mathrm{Ad}\left(E\right)$ is also semistable; thus
we obtain the equivalence between the two first assertions. The statements
(2) and (3) are equivalent by Proposition \ref{flat semistable}.
Finally, we can use \cite[Proposition 2.2]{AzadBiswas}, in the case
that $G$ is semisimple, to conclude that $E$ admits a flat connection
if and only if $\mathrm{Ad}(E)$ admits one (see also \cite{BaranovskyGinzburg}). \end{proof}
\begin{rem}
When $G$ is reductive, although the equivalence $(3)\Leftrightarrow(4)$
is not generally valid, we still can say that if $E$ is flat, then
$Ad(E)$ is flat (see \cite[Proposition 3.1]{AzadBiswas}). \end{rem}
\begin{thm}
\label{flat then schottky principal} Let $X$ be an elliptic curve,
and let $E$ be a $G$-bundle over $X$, for a connected reductive
algebraic group $G$. Then, $E$ is flat if and only if $E$ is Schottky.
In other words, for $g=1$, the Schottky uniformization map $\mathbf{W}:\mathbb{S}\to M_{G}$
is surjective. \end{thm}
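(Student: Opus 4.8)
The plan is to dispatch one implication immediately and to route the other through the adjoint bundle, so that the genus-one hypothesis enters only through Florentino's theorem on flat vector bundles over elliptic curves. The implication ``Schottky $\Rightarrow$ flat'' is immediate: by definition a Schottky $G$-bundle is isomorphic to some $E_{\rho}$, and such a bundle carries the natural flat connection coming from the representation $\rho$, so it is flat in the sense used here. Hence it suffices to prove the converse.

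For the converse, suppose $E$ is a flat $G$-bundle over the elliptic curve $X$. First I would pass to the adjoint bundle: by the Remark following Proposition \ref{propssflat} (using \cite[Proposition 3.1]{AzadBiswas}), flatness of $E$ forces $\mathrm{Ad}(E)$ to be flat as well, and this holds for merely reductive $G$. Second, since $\mathrm{Ad}(E)$ is now a \emph{flat vector bundle} over an elliptic curve, I would invoke \cite[Thm. 6]{Florentino}, which guarantees that every flat vector bundle over an elliptic curve is a Schottky vector bundle; hence $\mathrm{Ad}(E)$ is Schottky. Third, because $E$ admits a flat connection and $\mathrm{Ad}(E)$ is Schottky, Proposition \ref{AdEToESchottky} applies verbatim and yields that $E$ is a Schottky $G$-bundle.

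Combining the two implications gives the biconditional. The final surjectivity statement for $\mathbf{W}\colon\mathbb{S}\to M_{G}$ then follows formally: $M_{G}$ is by definition the set of isomorphism classes of flat $G$-bundles, and we have just shown that each such class is of the form $[E_{\rho}]$ for a Schottky $\rho$, i.e.\ lies in the image of $\mathbf{W}$.

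I do not expect a serious obstacle, since each step is supplied by a result established earlier or cited; the one point to watch is that the genus-one hypothesis is essential and enters precisely at the second step, where \cite[Thm. 6]{Florentino} is special to elliptic curves (for $g\geq 2$ not every flat vector bundle is Schottky). I would also keep track of the strict-versus-general distinction: Florentino's theorem produces a \emph{strict} Schottky structure on $\mathrm{Ad}(E)$, and this is exactly the hypothesis ``$\mathrm{Ad}(E)$ is a Schottky vector bundle'' required by Proposition \ref{AdEToESchottky}, whose conclusion is only that $\rho(\alpha_{i})\in Z$, that is, that $E$ is Schottky (not necessarily strict), which is consistent with the statement being proved.
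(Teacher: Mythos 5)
Your proof is correct and follows essentially the same route as the paper: Schottky implies flat by definition, and conversely flatness of $E$ gives flatness of $\mathrm{Ad}(E)$, then \cite[Thm. 6]{Florentino} makes $\mathrm{Ad}(E)$ a (strict) Schottky vector bundle, and Proposition \ref{AdEToESchottky} upgrades this to $E$ being Schottky. Your added care about the strict-versus-general distinction and the explicit appeal to \cite[Proposition 3.1]{AzadBiswas} for the reductive case match the paper's implicit reasoning exactly.
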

\begin{proof}
A Schottky $G$-bundle $E$ is, by definition, flat. If the $G$-bundle
$E$ admits a flat connection then it induces a flat connection in
$\mathrm{Ad}\left(E\right)$. Using \cite[Theorem 6]{Florentino},
$\mathrm{Ad}\left(E\right)$ is strict Schottky, because it is a flat
vector bundle of degree $0$. By Proposition \ref{AdEToESchottky},
since $\Ad\left(E\right)$ is Schottky and $E$ is flat we obtain
that $E$ is a Schottky $G$-bundle. \end{proof}
\begin{rem}
When $G$ has a connected center the above result, together with Proposition
\ref{connected-center}, implies that, on an elliptic curve, $E$
is flat if and only if it is a \emph{strict} Schottky $G$-bundle. 
\end{rem}
The following Corollary follows directly from Proposition \ref{propssflat}
and Theorem \ref{flat then schottky principal}. 
\begin{cor}
\label{cor: all ss are Schottky}Let $X$ be an elliptic curve and
let $G$ be a semisimple algebraic group. Then every semistable $G$-bundle
over $X$ is Schottky and it is strict Schottky if $Z$ is connected.
In particular, the Schottky moduli map $\mathbf{V}:\mathbb{S}^{*}\to\mathcal{M}_{G}$
is surjective. \end{cor}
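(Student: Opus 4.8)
The plan is to chain the two preceding results, since for semisimple $G$ the notions of semistability, flatness, and the Schottky property all coincide on an elliptic curve. First I would invoke Proposition \ref{propssflat}: because $G$ is semisimple, the equivalence $(1)\Leftrightarrow(4)$ there shows that a $G$-bundle $E$ over $X$ is semistable if and only if it is flat. Thus, starting from a semistable $E$, I immediately obtain that $E$ admits a flat holomorphic connection. Next I would apply Theorem \ref{flat then schottky principal}, which asserts that over an elliptic curve a $G$-bundle is flat if and only if it is Schottky; combining the two gives that every semistable $E$ is a Schottky $G$-bundle, proving the first assertion.

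For the strict refinement, I would add the hypothesis that the center $Z$ is connected and appeal to Proposition \ref{connected-center}, which states that under this assumption a bundle is Schottky if and only if it is strict Schottky. Applying it to the Schottky bundle $E$ just produced yields that $E$ is in fact strict Schottky, as claimed.

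For the surjectivity statement, the key observation is that the equivalence $(1)\Leftrightarrow(4)$ of Proposition \ref{propssflat} forces $\mathcal{M}_{G}=\mathcal{M}_{G}^{F}$ in the semisimple case: every semistable bundle already lies in $M_{G}$, so $\mathcal{M}_{G}\cap M_{G}=\mathcal{M}_{G}$, and hence $\mathbb{S}^{*}=\mathbf{W}^{-1}(\mathcal{M}_{G})$. Given any class $[E]\in\mathcal{M}_{G}$, the first part provides a Schottky representation $\rho$ with $E_{\rho}\cong E$; then $\mathbf{W}([\rho])=[E]\in\mathcal{M}_{G}^{F}$, so $[\rho]\in\mathbb{S}^{*}$ and $\mathbf{V}([\rho])=[E]$. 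This exhibits every point of $\mathcal{M}_{G}$ in the image, giving surjectivity of $\mathbf{V}$.

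I do not expect a genuine obstacle here: all the analytic and geometric content has been packaged into Propositions \ref{propssflat} and \ref{connected-center} and Theorem \ref{flat then schottky principal}, and what remains is bookkeeping. The only point requiring a little care is the bookkeeping just described, namely recognizing that semisimplicity collapses $\mathcal{M}_{G}^{F}$ onto all of $\mathcal{M}_{G}$, so that the restricted map $\mathbf{V}$ is defined on the preimage of the entire moduli space and its surjectivity is equivalent to the Schottky property of arbitrary semistable bundles.
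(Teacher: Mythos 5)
Your proof is correct and takes essentially the same route as the paper, which states that the corollary follows directly from Proposition \ref{propssflat} (semistable $\Leftrightarrow$ flat for semisimple $G$) and Theorem \ref{flat then schottky principal} (flat $\Leftrightarrow$ Schottky on an elliptic curve), with Proposition \ref{connected-center} handling the strict case exactly as you do. Your explicit bookkeeping that semisimplicity forces $\mathcal{M}_{G}^{F}=\mathcal{M}_{G}$, so that $\mathbb{S}^{*}=\mathbf{W}^{-1}(\mathcal{M}_{G})$ and surjectivity of $\mathbf{V}$ follows, merely spells out what the paper leaves implicit.
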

\begin{rem}
(1) A statement that includes both cases in Sections 9.1 and 9.2 is
the following: Let $X$ be a compact Riemann surface, $G$ a connected
reductive group, and $E$ a $G$-bundle on $X$. If either $\pi_{1}$
or $G$ are abelian, then $E$ is flat if and only if $E$ is Schottky.\\
(2) In the case $g=1$, since $\pi_{1}\cong\mathbb{Z}^{2}$, there
are no irreducible representations (nor good representations) $\rho:\pi_{1}\to G$,
for non-abelian $G$. However, the moduli space of semistable $G$-bundles
is non-empty, and is generally a weighted projective space (see for
example \cite{FriedmanMorganWitten}).
\end{rem}
Schottky vector bundles over elliptic curves, have been applied to
an analytic construction of non-abelian theta functions for $G=SL_{n}\mathbb{C}$,
which is completely analogous to the abelian classic case, \cite{FlorentinoMouraoNunes03,FlorentinoMouraoNunes04},
in the context of geometric quantization of the moduli space of vector
bundles. In a future work, we plan to give a generalization of these
results to Schottky $G$-bundles over an elliptic curve, for a general
reductive algebraic group $G$. 

\bibliographystyle{amsalpha}
\bibliography{Bibliography}

\end{document}